\theoremstyle{plain}
\newtheorem{cor}{Corollary}
\newtheorem{prop}[cor]{Proposition}
\newtheorem{con}[cor]{Control}
\newtheorem{thm}[cor]{Theorem}
\theoremstyle{definition}
\numberwithin{cor}{section}
\numberwithin{equation}{section}
\DeclareMathOperator{\tr}{tr}
\DeclareMathOperator{\C}{C}
\DeclareMathOperator{\USC}{USC}
\DeclareMathOperator{\LSC}{LSC}
\DeclareMathOperator{\BUC}{BUC}
\DeclareMathOperator{\Lip}{Lip}
\DeclareMathOperator{\Supp}{Supp}
\renewcommand{\d}{d} % the ambient dimension
\newcommand{\abs}[1]{\lvert#1\rvert}
\newcommand{\norm}[1]{\lVert#1\rVert}
\def\XXint#1#2#3{{\setbox0=\hbox{$#1{#2#3}{\int}$ }
\vcenter{\hbox{$#2#3$ }}\kern-.6\wd0}}
\title{On The Existence of an Invariant Measure for Isotropic Diffusions in Random Environment}
\author{Benjamin J. Fehrman}
\date{April 21, 2014}
\subjclass[2010]{35B27, 60J60}
\keywords{invariant measure, stochastic homogenization, diffusion in random environment}
\address{Department of Mathematics, The University of Chicago, 5734 S. University Avenue, Chicago IL, 60637.}
\email{bfehrman@math.uchicago.edu}
\begin{document}

\begin{abstract}
The results of this paper build upon those first obtained by Sznitman and Zeitouni in \cite{SZ}.  We establish, for spacial dimensions $d\geq 3$, the existence of a unique invariant measure for isotropic diffusions in random environment on $\mathbb{R}^d$ which are small perturbations of Brownian motion.  Furthermore, we establish a general homogenization result for initial data which are locally measurable with respect to the coefficients.
\end{abstract}

\maketitle

\section{Introduction}

The results of this paper should be seen as an extension of those first obtained in Sznitman and Zeitouni \cite{SZ} for stationary diffusion processes in random environment on $\mathbb{R}^d$, for $d\geq 3$, satisfying a restricted isotropy condition and finite range dependence.  Our framework depends upon an underlying probability space $\left(\Omega,\mathcal{F},\mathbb{P}\right)$, which can be viewed as indexing the collection of all equations or environments described, for each $x\in\mathbb{R}^d$ and $\omega\in\Omega$, by the coefficients \begin{equation}\label{intro_coefficients} A(x,\omega)\in\mathcal{S}(d)\;\;\textrm{and}\;\;b(x,\omega)\in\mathbb{R}^d,\end{equation} for $\mathcal{S}(d)$ the space of $d\times d$ symmetric matrices.

We will in particular assume that the coefficients are stationary, satisfying a finite-range dependence and restricted isotropy condition.  Precisely, there exists a group $\left\{\tau_x:\Omega\rightarrow\Omega\right\}_{x\in\mathbb{R}^d}$ of measure-preserving transformations such that, for each $x,y\in\mathbb{R}^d$ and $\omega\in\Omega$, \begin{equation}\label{i_stationary}  A(x+y,\omega)=A(x,\tau_y\omega)\;\;\textrm{and}\;\;b(x+y,\omega)=b(x,\tau_y\omega).\end{equation}  There exists $R>0$ such that, whenever subsets $A,B\subset\mathbb{R}^d$ satisfy $\d(A,B)\geq R$, the sigma-algebras \begin{equation}\label{intro_finite} \sigma\left(A(x,\omega), b(x,\omega)\;|\;x\in A\right)\;\;\textrm{and}\;\;\sigma\left(A(x,\omega), b(x,\omega)\;|\;x\in B\right)\;\;\textrm{are independent.}\end{equation}  And, for every orthogonal transformation $r:\mathbb{R}^d\rightarrow\mathbb{R}^d$ preserving the coordinate axis, for every $x\in\mathbb{R}^d$, the random variables \begin{equation}\label{intro_isotropy} \left(rb(x,\omega), rA(x,\omega)r^t\right)\;\;\textrm{and}\;\;\left(b(rx,\omega), A(rx,\omega)\right)\;\;\textrm{have the same law.}\end{equation}  We remark that these assumptions are identical to model considered in \cite{SZ} and, are the continuous counterpart of the model first studied in the discrete setting by Bricmont and Kupiainen \cite{BK}.

We observe that the martingale problem, for each $x\in\mathbb{R}^d$ and $\omega\in\Omega$, corresponding to the generator $$\frac{1}{2}\sum_{i,j=1}^da_{ij}(y,\omega)\frac{\partial^2}{\partial_{y_i}\partial_{y_j}}-\sum_{i=1}^db_i(y,\omega)\frac{\partial}{\partial_{y_i}},$$ is well-posed, see Stroock and Varadhan \cite{SV}.  We denote by $P_{x,\omega}$ the corresponding probability measure on the space of continuous paths $\C([0,\infty);\mathbb{R}^d)$ and recall that, almost surely with respect to $P_{x,\omega}$, paths $X_t\in\C([0,\infty);\mathbb{R}^d)$ satisfy the stochastic differential equation $$\left\{\begin{array}{l} dX_t=-b(X_t,\omega)dt+\sigma(X_t,\omega)dB_t, \\ X_0=x,\end{array}\right.$$ for $A(y,\omega)=\sigma(y,\omega)\sigma(y,\omega)^t$, and for $B_t$ a standard Brownian motion under $P_{x,\omega}$ with respect to the canonical right continuous filtration on $\C([0,\infty);\mathbb{R}^d)$. 

We may now describe our main result.  For every measurable subset $E\in\mathcal{F}$, recalling the transformation group appearing in (\ref{i_stationary}), we write \begin{equation}\label{i_initial}P_t(\omega,E)=P_{0,\omega}\left(\tau_{X_t}\omega\in E\right).\end{equation}  And, we recall that the transformation group is said to be ergodic if, whenever $E\in\mathcal{F}$ satisfies, for every $x\in\mathbb{R}^d$, $$\tau_x(E)=E\;\;\textrm{in the measure algebra of}\;\;(\Omega,\mathcal{F},\mathbb{P}),$$ either $\mathbb{P}(E)=0$ or $\mathbb{P}(E)=1$.

\begin{thm}\label{i_main}  There exists a unique probability measure $\pi$ on $(\Omega,\mathcal{F})$ which is absolutely continuous with respect to $\mathbb{P}$ and satisfies, for every $t\geq 0$ and $E\in\mathcal{F}$, $$\pi(E)=\int_{\Omega}P_t(\omega,E)\;d\pi.$$  Furthermore, if $\left\{\tau_x\right\}_{x\in\mathbb{R}^d}$ is ergodic, then $\pi$ is mutually absolutely continuous with respect to $\mathbb{P}$ and defines an ergodic probability measure with respect to the canonical Markov process on $\Omega$ defining (\ref{i_initial}).\end{thm}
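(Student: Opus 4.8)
\emph{Existence and the density of $\pi$.} The plan is to obtain $\pi$ as a Cesàro limit of the laws of the environment process $\omega\mapsto\tau_{X_t}\omega$, to identify its density explicitly, and then to deduce uniqueness and ergodicity from an almost sure ergodic theorem for that process; all the analytic substance is carried by the quantitative homogenization estimates available for small isotropic perturbations of Brownian motion in $d\geq 3$ --- the continuum counterpart of the renormalization scheme of \cite{SZ}. Write $p(t,x,y;\omega)$ for the transition density of $P_{x,\omega}$, which exists by uniform ellipticity. A change of variables using only the stationarity (\ref{i_stationary}) and the identity $p(t,0,y;\omega)=p(t,-y,0;\tau_y\omega)$ shows that the averaged law $\mathbb{P}P_t(\cdot):=\int_\Omega P_t(\omega,\cdot)\,d\mathbb{P}$ is absolutely continuous with respect to $\mathbb{P}$, with density
$$\phi_t(\omega)=\int_{\mathbb{R}^d}p(t,y,0;\omega)\,dy,\qquad\text{so that}\qquad\int_\Omega\phi_t\,d\mathbb{P}=1 .$$
The first quantitative input is that, in the present regime, there are $p>1$ and $C<\infty$ with $\sup_{t\geq 0}\norm{\phi_t}_{L^p(\mathbb{P})}\leq C$: heuristically $\phi_t=1+o(1)$ in $L^p(\mathbb{P})$ uniformly in $t$, since $\phi_t\equiv 1$ for Brownian motion. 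Granting this, the Cesàro averages $\bar\phi_T:=\tfrac1T\int_0^T\phi_t\,dt$ are bounded in $L^p(\mathbb{P})$ and hence converge weakly in $L^p(\mathbb{P})$ along some $T_n\to\infty$ to a density $\psi\geq 0$ with $\int_\Omega\psi\,d\mathbb{P}=1$; put $\pi:=\psi\,d\mathbb{P}$. Invariance is Krylov--Bogolyubov: from $(\mathbb{P}P_t)P_s=\mathbb{P}P_{t+s}$ one has
$$\Big(\tfrac1{T_n}\int_0^{T_n}\mathbb{P}P_t\,dt\Big)P_s-\tfrac1{T_n}\int_0^{T_n}\mathbb{P}P_t\,dt=\tfrac1{T_n}\Big(\int_{T_n}^{T_n+s}-\int_0^{s}\Big)\mathbb{P}P_t\,dt ,$$
a signed measure of total variation at most $2s/T_n$; pairing against an arbitrary bounded $g$ (so $P_sg$ is bounded, hence in $L^{p'}(\mathbb{P})$) and letting $n\to\infty$ gives $\pi P_s=\pi$ for every $s\geq0$.

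\emph{Uniqueness and ergodicity.} The second quantitative input is an almost sure ergodic theorem for the environment process: for every bounded continuous $g$ on $\Omega$,
$$\frac1T\int_0^T g(\tau_{X_t}\omega)\,dt\ \longrightarrow\ \int_\Omega g\,d\pi\qquad P_{0,\omega}\text{-a.s., for }\mathbb{P}\text{-a.e. }\omega .$$
This again follows from the homogenization estimates: the rescaled diffusion behaves like a Brownian motion on all large scales, so its long-time occupation measure spreads over scales $\sqrt T$ and averages the environment, the discrepancy between $\mathbb{P}$ and $\pi$ being exactly the corresponding ergodic weight. Taking $P_{0,\omega}$-expectations and using dominated convergence yields $\tfrac1T\int_0^T(P_tg)(\omega)\,dt\to\int_\Omega g\,d\pi$ for $\mathbb{P}$-a.e. $\omega$. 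Now let $\tilde\pi$ be any probability measure invariant under $P_t$ and absolutely continuous with respect to $\mathbb{P}$. Since $\tilde\pi$ is invariant, for bounded continuous $g$ one has $\int_\Omega g\,d\tilde\pi=\int_\Omega\big(\tfrac1T\int_0^T(P_tg)\,dt\big)\,d\tilde\pi$ for every $T$, and the right-hand side converges to $\int_\Omega g\,d\pi$ as $T\to\infty$ by dominated convergence (the integrands are bounded by $\norm{g}_\infty$ and converge $\tilde\pi$-a.e., as $\tilde\pi\ll\mathbb{P}$); hence $\tilde\pi=\pi$, which is the uniqueness assertion. The same convergence holds $\pi$-a.e. (again $\pi\ll\mathbb{P}$), and convergence of the time averages of every bounded continuous $g$ to its space mean is precisely ergodicity of $\pi$ for the environment process.

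\emph{Mutual absolute continuity, and the main obstacle.} Assume $\{\tau_x\}_{x\in\mathbb{R}^d}$ ergodic and set $N=\{\psi=0\}$. From $0=\pi(N)=\int_\Omega P_t(\omega,N)\,d\pi$ one gets $P_t(\omega,N)=0$ for all $t>0$ and $\mathbb{P}$-a.e. $\omega\in\{\psi>0\}$; since $p(t,0,\cdot;\omega)>0$ on $\mathbb{R}^d$ for $t>0$, this means $\abs{\{y\in\mathbb{R}^d:\tau_y\omega\in N\}}=0$ for $\mathbb{P}$-a.e. $\omega\in\{\psi>0\}$. But $\mathbb{P}(\{\psi>0\})>0$, while the spatial ergodic theorem makes $\{y:\tau_y\omega\in N\}$ have density $\mathbb{P}(N)$ for $\mathbb{P}$-a.e. $\omega$; hence $\mathbb{P}(N)=0$, i.e. $\psi>0$ $\mathbb{P}$-a.s. and $\pi\sim\mathbb{P}$. (The ergodicity hypothesis is used here and to ensure the environment is genuinely averaged in the ergodic theorem above; note that the finite-range dependence (\ref{intro_finite}) in fact already forces $\{\tau_x\}_{x\in\mathbb{R}^d}$ to be ergodic.) Thus the argument is soft once one has the two quantitative ingredients --- the uniform $L^p(\mathbb{P})$ bound on the heat-kernel masses $\phi_t$, and the almost sure long-time ergodicity of the environment process --- and it is exactly here that the smallness of the perturbation and the restriction $d\geq 3$ are indispensable, being what permits the renormalization and coupling scheme of \cite{SZ} to control the diffusion on every large scale. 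I would therefore develop these estimates first and treat the construction and identification of $\pi$ as their consequences.
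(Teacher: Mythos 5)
Your outline is not a proof but a reduction of the theorem to two unproved ``quantitative inputs,'' and those inputs carry essentially all of the content. The first, $\sup_{t\ge 0}\|\phi_t\|_{L^p(\mathbb{P})}\le C$ for some $p>1$, is nowhere established in the paper and is itself a delicate statement for non-divergence-form, non-reversible diffusions; the paper deliberately avoids needing any such uniform integrability of the heat-kernel masses. Instead it constructs $\pi$ along the discrete scales $L_n^2$ via $\pi_n(f)=\mathbb{E}\bigl((\tilde R_n)^6R_1f(0,\omega)\bigr)$, shows this sequence is Cauchy using Control \ref{Holder} (Propositions \ref{o_main}, \ref{o_cauchy}, \ref{o_identify}), and obtains absolute continuity of $\pi$ with respect to $\mathbb{P}$ by the soft Fubini argument of Proposition \ref{o_pmeasure}, with no $L^p$ bound on the density. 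The second input --- the almost sure ergodic theorem $\frac1T\int_0^Tg(\tau_{X_t}\omega)\,dt\to\int_\Omega g\,d\pi$ --- is circular as you state it: the limit is asserted to be the integral against the very measure $\pi$ you have only just produced as a weak subsequential limit, and identifying the a.s.\ limit of the time averages with that particular $\pi$ is precisely the difficulty. Such an ergodic theorem is normally a \emph{consequence} of existence, uniqueness and ergodicity of the invariant measure (Birkhoff for the environment process), not a tool for proving them, unless you derive it independently from the renormalization estimates --- which you defer. Your heuristic (``the occupation measure spreads over scales $\sqrt T$ and averages the environment'') is also not the mechanism the paper uses: the averaging in the paper is spatial averaging of the solution $R_tf_E(x,\omega)$ by the effective heat kernel $\overline R_n$, quantified through the finite-range dependence (\ref{finitedep}) as a variance bound (Proposition \ref{u_var}) and upgraded to a.s.\ convergence of $P_t(\omega,E)$ itself (not Ces\`aro means) by Borel--Cantelli (Propositions \ref{u_upmain}, \ref{u_invariant}), for $E$ in the algebra $\bigcup_{R>0}\sigma_{B_R}$, then extended to $\mathcal{F}$ by Carath\'eodory. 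Uniqueness then follows by dominated convergence for any invariant $\mu\ll\mathbb{P}$ (Theorem \ref{u_thm}).

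Two smaller points. Your mutual-absolute-continuity argument (positivity of $p(\cdot,1,\cdot,\omega)$ plus the spatial ergodic theorem) is essentially the paper's Proposition \ref{u_mutual} and is fine once $\pi$ exists and is invariant. Your parenthetical remark that (\ref{intro_finite}) already forces ergodicity of $\{\tau_x\}$ is correct given (\ref{sigmaalgebra}), though the paper keeps it as a separate hypothesis. But as submitted, the proposal has a genuine gap: the two pillars it rests on are exactly what must be proved, and one of them is stated in a form that presupposes the conclusion.
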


We achieve this result by analyzing the long term behavior of solutions $u:\mathbb{R}^d\times[0,\infty)\times\Omega\rightarrow\mathbb{R}$ satisfying \begin{equation}\label{i_eq}\left\{\begin{array}{ll} u_t-\frac{1}{2}\tr(A(y,\omega)D^2u)+b(y,\omega)\cdot Du=0 & \textrm{on}\;\;\mathbb{R}^d\times(0,\infty), \\ u(x,0,\omega)=f(x,\omega) & \textrm{on}\;\;\mathbb{R}^\times\left\{0\right\},\end{array}\right.\end{equation} since, for ${\bf{1}}_E:\Omega\rightarrow\Omega$ the indicator function of $E\in\mathcal{F}$, for $f_E(x,\omega)={\bf{1}}_E(\tau_x\omega),$ if $u_E(x,t,\omega)$ satisfies (\ref{i_eq}) with initial data $f_E(x,\omega)$, then, for each $\omega\in\Omega$ and $t\geq 0$, $$u_E(0,t,\omega)=P_{0,\omega}\left({\bf{1}}_E(\tau_{X_t}\omega)\right)=P_{0,\omega}\left(\tau_{X_t}\omega\in E\right)=P_t(\omega, E).$$

Indeed, along an exponentially increasing sequence of time scales $L_n^2$, see (\ref{L}), the invariant measure $\pi$ is first identified, for every $E\in\mathcal{F}$, as the limit $$\pi(E)=\lim_{n\rightarrow\infty}\mathbb{E}\left(u_E(0,L_n^2,\omega)\right),$$  where we prove that the limit exists in Proposition \ref{o_cauchy} and Proposition \ref{o_identify} and, in Proposition \ref{o_pmeasure}, we prove that $\pi$ defines a probability measure on $(\Omega,\mathcal{F})$ which is absolutely continuous with respect to $\mathbb{P}$.

We then establish an almost sure characterization of $\pi$ along the full limit, as $t\rightarrow\infty$, for a class of subsets $E\in\mathcal{F}$ whose indicator functions satisfy a version of (\ref{intro_finite}), see Proposition \ref{u_localize}.  For such subsets, we prove that, on a subset of full probability depending on $E$, \begin{equation}\label{i_almost}\lim_{t\rightarrow\infty}u_E(0,t,\omega)=\pi(E).\end{equation}  Here, we use crucially the results of \cite{SZ}, where it is shown that, with high probability, there exists a coupling at large length and time scales between the diffusion process generated in environment $\omega$ by coefficients $A(y,\omega)$ and $b(y,\omega)$ and a Brownian motion with deterministic variance, see Control \ref{Holder}.  Notice, however, that this coupling cannot in general provide an effective comparison between solutions of (\ref{i_eq}) and solutions $\overline{u}:\mathbb{R}^d\times[0,\infty)\times\Omega\rightarrow\mathbb{R}$ satisfying the deterministic equation, for $\overline{\alpha}>0$ defined in Theorem \ref{effectivediffusivity}, \begin{equation}\label{i_hom}\left\{\begin{array}{ll} \overline{u}_t-\frac{\overline{\alpha}}{2}\Delta\overline{u}=0& \textrm{on}\;\;\mathbb{R}^d\times(0,\infty), \\ \overline{u}(x,0,\omega)=f(x,\omega) & \textrm{on}\;\;\mathbb{R}^d\times\left\{0\right\},\end{array}\right.\end{equation}  since, for stationary initial data, we expect $$\lim_{t\rightarrow \infty}u(0,t,\omega)=\int_{\Omega}f(0,\omega)\;d\pi\;\;\textrm{and}\;\;\lim_{t\rightarrow\infty}\overline{u}(0,t,\omega)=\int_{\Omega}f(0,\omega)\;d\mathbb{P}=\mathbb{E}(f(0,\omega)).$$  

However, in Proposition \ref{o_main}, this coupling does provide a means by which the solution of (\ref{i_eq}) can be effectively compared, with high probability, on large length and time scales, to a quantity which, for suitable initial data, is nearly constant.  That is, with high probability, we obtain an effective comparison between the solution $u(x,t,\omega)$ of (\ref{i_eq}) at time $L_n^2$ with the solution of (\ref{i_hom}) at time $L_n^2-L_{n-1}^2$ corresponding to initial data $u(x,L_{n-1}^2,\omega)$.

This is essentially to say that $u(x,L_n^2,\omega)$ is an averaged version of $u(x,L_{n-1}^2,\omega)$, where we provide a quantitative version of the averaging in Proposition \ref{u_var} for subsets whose characteristic function satisfies  a version of (\ref{intro_finite}), see Propositions \ref{u_sigma_translate} and \ref{u_localize}.  In combination, the comparison and averaging complete the proof of (\ref{i_almost}).

Finally, in \cite{SZ}, localization estimates for the diffusion in environment $\omega$ are obtained with high probability, see Control \ref{localization}.  We use this localization in Proposition \ref{u_upmain} to upgrade the convergence along the discrete sequence $L_n^2$ to the full limit, as $t\rightarrow\infty$, at the cost of obtaining the convergence on a marginally smaller portion of space.  The proof of invariance and uniqueness then follow by standard arguments, see Proposition \ref{u_invariant} and Theorem \ref{u_thm}.

Furthermore, as an application of Proposition \ref{u_upmain}, we establish a homogenization result for oscillating initial data which are locally measurable with respect to the coefficients.  Precisely, we define, for each $R>0$, the sigma algebra $$\sigma_{B_R}=\sigma\left(A(x,\omega),b(x,\omega)\;|\;x\in B_R\right),$$ and consider functions $f\in L^\infty(\mathbb{R}^d\times\Omega)$ which are stationary with respect to the translation group $\left\{\tau_x\right\}_{x\in\mathbb{R}^d}$ and satisfy $f(0,\omega)\in L^\infty(\Omega,\sigma_{B_R})$, where $L^\infty(\Omega,\sigma_{B_R})$ denotes the space of bounded $\sigma_{B_R}$-measurable functions on $\Omega$.

\begin{thm}\label{i_homogenization}  Suppose that $f\in L^\infty(\mathbb{R}^d\times\Omega)$ and $R>0$ satisfy, for each $x,y\in\mathbb{R}^d$ and $\omega\in\Omega$, $$f(x+y,\omega)=f(x,\tau_y\omega),$$ with $f(0,\omega)\in L^\infty(\Omega,\sigma_{B_R})$.   For each $\epsilon>0$, let $u^\epsilon:\mathbb{R}^d\times[0,\infty)\times\Omega\rightarrow\mathbb{R}$ denote the solution to \begin{equation}\label{i_homogenization_1}\left\{\begin{array}{ll} u^\epsilon_t-\frac{1}{2}\tr(A(x/\epsilon,\omega)D^2u^\epsilon)+\frac{1}{\epsilon}b(x/\epsilon,\omega)\cdot Du^\epsilon=0 & \textrm{on}\;\;\mathbb{R}^d\times(0,\infty), \\ u^\epsilon(x,0,\omega)=f(x/\epsilon,\omega) & \textrm{on}\;\;\mathbb{R}^d\times\left\{0\right\}.\end{array}\right.\end{equation}  There exists a subset of full probability such that, as $\epsilon\rightarrow 0$, $$u^\epsilon\rightarrow\int_{\Omega}f(0,\omega)\;d\pi\;\;\textrm{locally uniformly on}\;\;\mathbb{R}^d\times(0,\infty).$$\end{thm}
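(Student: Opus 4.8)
The plan is to deduce Theorem~\ref{i_homogenization} from the almost-sure full-limit convergence \eqref{i_almost}, which has already been established (via Proposition~\ref{u_upmain}) for solutions of \eqref{i_eq} with stationary, locally $\sigma_{B_R}$-measurable initial data. First I would observe that the rescaled equation \eqref{i_homogenization_1} is related to \eqref{i_eq} by the parabolic scaling $u^\epsilon(x,t,\omega) = u(x/\epsilon, t/\epsilon^2, \omega)$, where $u$ solves \eqref{i_eq} with initial data $f(\cdot,\omega)$; this is the natural diffusive scaling that matches the $1/\epsilon$ drift and the $1/\epsilon$ argument in the coefficients. Under this identification, evaluating $u^\epsilon$ at a fixed point $(x,t)$ with $t>0$ amounts to evaluating $u$ at $(x/\epsilon, t/\epsilon^2)$, and as $\epsilon\to 0$ the time $t/\epsilon^2\to\infty$, so the full-limit result should apply.

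The subtlety is that \eqref{i_almost} as stated gives convergence at the spatial origin, $u_E(0,t,\omega)\to\pi(E)$, whereas here I need convergence of $u(y,s,\omega)$ as $s\to\infty$ that is \emph{uniform in $y$ over compact sets in the rescaled picture}, i.e. uniform for $y = x/\epsilon$ with $x$ in a compact set. I would handle this by appealing to the stationarity of the coefficients and of the initial data: for fixed $\omega$, the function $(y,s)\mapsto u(y,s,\omega)$ started from $f(\cdot,\omega)$ satisfies, by \eqref{i_stationary} and the stationarity of $f$, the relation $u(y,s,\omega) = u(0,s,\tau_y\omega)$, so that spatial translation in the PDE corresponds to the action of $\tau_y$ on the environment. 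Thus pointwise convergence $u(0,s,\tau_y\omega)\to \int_\Omega f(0,\cdot)\,d\pi$ would need to hold simultaneously for all relevant $y$; since $y=x/\epsilon$ ranges over an unbounded set as $\epsilon\to0$, one cannot simply intersect countably many full-probability events indexed by $y$. Instead I expect the right approach is to return to the proof of Proposition~\ref{u_upmain} and extract from it a \emph{spatially uniform} statement: the localization estimates of Control~\ref{localization} and the averaging/comparison machinery (Propositions~\ref{o_main}, \ref{u_var}, \ref{u_upmain}) are proved on balls of radius comparable to $L_n$, so the convergence $u(\cdot,L_n^2,\omega)\to\pi$ should already hold uniformly on balls of radius $\sim L_n^{1-\kappa}$ for some small $\kappa>0$; rescaling, $\{|x/\epsilon|\le C/\epsilon\}$ is eventually contained in such a ball once $L_n^2 \approx t/\epsilon^2$, i.e. $L_n \approx \sqrt t/\epsilon \gg C/\epsilon$. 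This is the main obstacle: promoting the origin-only statement \eqref{i_almost} to a locally-uniform-in-space statement after rescaling, which requires that the full-probability set in Proposition~\ref{u_upmain} be chosen uniformly over a ball whose radius grows like the length scale.

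With that uniformity in hand, the argument concludes as follows. Fix $\omega$ in the full-probability set. Given a compact $K\subset\mathbb{R}^d$ and $0 < t_1 \le t_2$, and $x\in K$, $t\in[t_1,t_2]$, write $u^\epsilon(x,t,\omega) = u(x/\epsilon, t/\epsilon^2,\omega)$ and apply the uniform convergence on the ball $B_{R_\epsilon}$ with $R_\epsilon = \diam(K)/\epsilon \to\infty$, at the time $t/\epsilon^2 \ge t_1/\epsilon^2 \to\infty$; the uniform-in-space, full-limit convergence then forces $u^\epsilon(x,t,\omega)\to \int_\Omega f(0,\cdot)\,d\pi$ uniformly over $(x,t)\in K\times[t_1,t_2]$. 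Since $K$ and $[t_1,t_2]\subset(0,\infty)$ are arbitrary, this is exactly locally uniform convergence on $\mathbb{R}^d\times(0,\infty)$. I would also note that the constant $\int_\Omega f(0,\omega)\,d\pi$ is the correct limit precisely because, by definition of $\pi$ and the martingale/PDE correspondence $u_E(0,t,\omega)=P_t(\omega,E)$, linearity and density of simple functions extend \eqref{i_almost} from indicators $f_E$ to general bounded stationary $\sigma_{B_R}$-measurable $f$; one verifies the extension using the $L^\infty$ bound $\norm{u(\cdot,t,\omega)}_{L^\infty}\le\norm{f}_{L^\infty}$ (maximum principle) so that the convergence passes to uniform limits of simple functions. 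A minor point worth recording is measurability/finiteness of $\int_\Omega f(0,\cdot)\,d\pi$, which is immediate since $f(0,\cdot)\in L^\infty(\Omega)$ and $\pi$ is a probability measure.
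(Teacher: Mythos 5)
Your proposal follows essentially the same route as the paper's proof of this result (Theorem \ref{h_thm}): reduce to indicators $f_E$ with $E\in\sigma_{B_{R_1}}$ by linearity and the $L^\infty$-contraction of $\pi_n$ and $\overline{\pi}$, rescale via $u^\epsilon(x,t,\omega)=u(x/\epsilon,t/\epsilon^2,\omega)$, and invoke the spatially uniform version of the convergence on the full-probability set $\Omega_0(E)$. The uniform statement you propose to ``extract'' is in fact already the literal content of Proposition \ref{u_upmain} together with the events $B_{n,k,t}(E)$, $B_{n,t}(E)$ and Borel--Cantelli, so nothing needs to be re-derived. One correction to an aside: uniformity on balls of radius $L_n^{1-\kappa}$ would \emph{not} suffice, since for fixed $t>0$ the set $\{|x/\epsilon|\le C/\epsilon\}$ has radius comparable to $L_n/\sqrt{t}$ when $t/\epsilon^2\approx L_n^2$; what is needed, and what Proposition \ref{u_upmain} actually provides, is uniformity on $B_{\sqrt{k}\tilde{D}_{n+1}}$, i.e.\ on balls of radius at least $\sqrt{s}$ for $s$ the elapsed time, which does contain $B_{R/\epsilon}$ once $t\ge R^2$ after rescaling.
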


These method also apply to equations like (\ref{i_homogenization_1}) involving an oscillating righthand side and, to the analogous time independent problems.  See Theorem \ref{h_1thm} and Theorem \ref{h_2thm}.

We remark that, in the case $b(y,\omega)=0$, the existence of an invariant measure and applications to homogenization were established by Papanicolaou and Varadhan \cite{PV1} and Yurinsky \cite{YVV}.  Furthermore, when equation (\ref{i_eq}) may be rewritten in divergence form, results have been obtained by De Masi, Ferrari, Goldstein and Wick \cite{MFGW}, Kozlov \cite{Kozlov}, Olla \cite{Olla}, Osada \cite{Osada} and Papanicolaou and Varadhan \cite{PV}.  We point the interested reader to the introduction of \cite{SZ} for a more complete list of references regarding related problems in the discrete setting.

The paper is organized as follows.  In Section 2, we present our notation and assumptions as well as provide a summary of the aspects of \cite{SZ} most relevant to our arguments.  We identify the invariant measure in Section 3 and, in Section 4, we prove that the invariant measure is indeed invariant and unique.  Finally, in Section 5, we prove the general homogenization result for functions which are locally measurable with respect to the coefficients.

\subsection*{Acknowledgments}

I would like to thank Professor Panagiotis Souganidis for suggesting this problem,  and I would like to thank Professors Panagiotis Souganidis, Ofer Zeitouni and Luis Silvestre for many useful conversations.

\section{Preliminaries} 

\subsection{Notation}

Elements of $\mathbb{R}^d$ and $[0,\infty)$ are denoted by $x$ and $y$ and $t$ respectively and $(x,y)$ denotes the standard inner product on $\mathbb{R}^d$.  We write $Dv$ and $v_t$ for the derivative of the scalar function $v$ with respect to $x\in\mathbb{R}^d$ and $t\in[0,\infty)$, while $D^2v$ stands for the Hessian of $v$.  The spaces of $k\times l$ and $k\times k$ symmetric matrices with real entries are respectively written $\mathcal{M}^{k\times l}$ and $\mathcal{S}(k)$.  If $M\in\mathcal{M}^{k\times l}$, then $M^t$ is its transpose and $\abs{M}$ is its norm $\abs{M}=\tr(MM^t)^{1/2}.$  If $M$ is a square matrix, we write $\tr(M)$ for the trace of $M$.  The Euclidean distance between subsets $A,B\subset\mathbb{R}^d$ is $$d(A,B)=\inf\left\{\;\abs{a-b}\;|\;a\in A, b\in B\;\right\}$$  and, for an index $\mathcal{A}$ and a family of measurable functions $\left\{f_\alpha:\mathbb{R}^d\times\Omega\rightarrow\mathbb{R}^{n_\alpha}\right\}_{\alpha\in\mathcal{A}}$, we write $$\sigma(f_\alpha(x,\omega)\;|\;x\in A, \alpha\in\mathcal{A})$$ for the sigma algebra generated by the random variables $f_\alpha(x,\omega)$ for $x\in A$ and $\alpha\in\mathcal{A}$.  For $U\subset\mathbb{R}^d$, $\USC(U;\mathbb{R}^d)$, $\LSC(U;\mathbb{R}^d)$, $\BUC(U;\mathbb{R}^d)$, $\Lip(U;\mathbb{R}^d)$, $\C^{0,\beta}(U;\mathbb{R}^d)$ and $\C^k(U;\mathbb{R}^d)$ are the spaces of upper-semicontinuous, lower-semicontinuous, bounded continuous, Lipschitz continuous, $\beta$-H\"{o}lder continuous and $k$-continuously differentiable functions on $U$ with values in $\mathbb{R}^d$.  For $f:\mathbb{R}^d\rightarrow\mathbb{R}$, we write $\Supp(f)$ for the support of $f$.  Furthermore, $B_R$ and $B_R(x)$ are respectively the open balls of radius $R$ centered at zero and $x\in\mathbb{R}^d$.  For a real number $r\in\mathbb{R}$ we write $\left[r\right]$ for the largest integer less than or equal to $r$.  Finally, throughout the paper we write $C$ for constants that may change from line to line but are independent of $\omega\in\Omega$ unless otherwised indicated.

\subsection{The Random Environment}

There exists an underlying probability space $(\Omega,\mathcal{F},\mathbb{P})$ indexing the individual realizations of the random environment.   Since the environment is described, for each $x\in\mathbb{R}^d$ and $\omega\in\Omega$, by the diffusion matrix $A(x,\omega)$ and drift $b(x,\omega)$, we may take \begin{equation}\label{sigmaalgebra} \mathcal{F}=\sigma\left(A(x,\omega),b(x,\omega)\;|\;x\in\mathbb{R}^d\right).\end{equation}  Furthermore, we assume this space is equipped with a \begin{equation}\label{transgroup} \textrm{group of measure-preserving transformations}\; \left\{\tau_x:\Omega\rightarrow\Omega\right\}_{x\in\mathbb{R}^d},\end{equation} such that the coefficients $A:\mathbb{R}^d\times\Omega\rightarrow\mathcal{S}(d)$ and $b:\mathbb{R}^d\times\Omega\rightarrow\mathbb{R}$ are bi-measurable stationary functions satisfying, for each $x,y\in\mathbb{R}^d$ and $\omega\in\Omega$, \begin{equation}\label{stationary} A(x+y,\omega)=A(x,\tau_y\omega)\;\;\textrm{and}\;\;b(x+y,\omega)=b(x,\tau_y\omega).\end{equation}

We assume that the diffusion matrix and drift are bounded and Lipschitz uniformly for $\omega\in\Omega$.  There exists $C>0$ such that, for all $y\in\mathbb{R}^d$ and $\omega\in\Omega$,  \begin{equation}\label{bounded} \abs{b(y,\omega)}\leq C\;\;\;\textrm{and}\;\;\;\abs{A(y,\omega)}\leq C \end{equation} and, for all $x,y\in\mathbb{R}^d$ and $\omega\in\Omega$, \begin{equation}\label{Lipschitz} \abs{b(x,\omega)-b(y,\omega)}\leq C\abs{x-y}\;\;\;\textrm{and}\;\;\;\abs{A(x,\omega)-A(y,\omega)}\leq C\abs{x-y}.\end{equation}  In addition, we assume that the diffusion matrix is uniformly elliptic uniformly in $\Omega$.  There exists $\nu>1$ such that, for all $y\in\mathbb{R}^d$ and $\omega\in\Omega$, \begin{equation}\label{elliptic} \frac{1}{\nu} I\leq A(y,\omega)\leq \nu I.\end{equation}

The coefficients satisfy a finite range dependence.  There exists $R>0$ such that, whenever $A,B\subset\mathbb{R}^d$ satisfy $d(A,B)\geq R$, the sigma algebras \begin{equation}\label{finitedep} \sigma(A(x,\omega), b(x,\omega)\;|\;x\in A)\;\;\;\textrm{and}\;\;\; \sigma(A(x,\omega), b(x,\omega)\;|\;x\in B)\;\;\;\textrm{are independent.}\end{equation}  The diffusion matrix and drift satisfy a restricted isotropy condition.  For every orthogonal transformation $r:\mathbb{R}^d\rightarrow\mathbb{R}^d$ which preserves the coordinate axes, for every $x\in\mathbb{R}^d$, \begin{equation}\label{isotropy} (b(rx,\omega),A(rx,\omega))\;\;\;\textrm{and}\;\;\;(rb(x,\omega),rA(x,\omega)r^t)\;\;\;\textrm{have the same law.}\end{equation}  And, finally, the diffusion matrix and drift are a small perturbation of the Laplacian.  There exists $\eta_0>0$, to later be chosen small, such that, for all $y\in\mathbb{R}^d$ and $\omega\in\Omega$, \begin{equation}\label{perturbation} \abs{b(y,\omega)}\leq\eta_0\;\;\textrm{and}\;\;\abs{A(y,\omega)-I}\leq \eta_0.\end{equation}

To avoid cumbersome statements in what follows, we introduce a steady assumption.  \begin{equation}\label{steady}\textrm{Assume}\;(\ref{sigmaalgebra}),(\ref{transgroup}), (\ref{stationary}), (\ref{bounded}), (\ref{Lipschitz}), (\ref{elliptic}), (\ref{finitedep}), (\ref{isotropy})\;\textrm{and}\;(\ref{perturbation}).\end{equation}

The collection of assumptions (\ref{transgroup}), (\ref{stationary}), (\ref{bounded}), (\ref{Lipschitz}) and (\ref{elliptic}) guarantee the well-posedness of the martingale problem set on $\mathbb{R}^d$, for each $\omega\in\Omega$ and $x\in\mathbb{R}^d$, associated to to the generator $$\frac{1}{2}\sum_{i,j=1}^da_{ij}(y,\omega)\frac{\partial^2}{\partial_{y_i}\partial_{y_j}}-\sum_{i=1}^db_i(y,\omega)\frac{\partial}{\partial_{y_i}},$$ see \cite{SV}.  We write $P_{x,\omega}$ and $E_{x,\omega}$ for the corresponding probability measure and expectation on the space of continuous paths $\C([0,\infty);\mathbb{R}^d)$ and remark that, almost surely with respect to $P_{x,\omega}$, paths $X_t\in\C([0,\infty);\mathbb{R}^d)$ satisfy the stochastic differential equation \begin{equation}\label{sde}\left\{\begin{array}{l} dX_t=-b(X_t,\omega)dt+\sigma(X_t,\omega)dB_t, \\ X_0=x,\end{array}\right.\end{equation} for $A(y,\omega)=\sigma(y,\omega)\sigma(y,\omega)^t$, and for $B_t$ a standard Brownian motion under $P_{x,\omega}$ with respect to the canonical right-continuous filtration on $\C([0,\infty);\mathbb{R}^d)$.

We write $\mathbb{P}_x=\mathbb{P}\times P_{x,\omega}$ and $\mathbb{E}_x=\mathbb{E}\times E_{x,\omega}$ for the corresponding semi-direct product measure and expectation on $\Omega\times\C([0,\infty);\mathbb{R}^d)$.  The annealed law $\mathbb{P}_x$ inherits the translation invariance and restricted rotational invariance implied by (\ref{stationary}) and (\ref{isotropy}).  In particular, for all $x,y\in\mathbb{R}^d$, \begin{equation}\label{annealed} \mathbb{E}_{x+y}(X_t)=\mathbb{E}_y(x+X_t)=x+\mathbb{E}_y(X_t),\end{equation} and, for all orthogonal transformations $r$ preserving the coordinate axis and $x\in\mathbb{R}^d$, \begin{equation}\label{annealed1} \mathbb{E}_{x}(rX_t)=\mathbb{E}_{rx}(X_t).\end{equation}   This stands in contrast to the quenched laws $P_{x,\omega}$, for which no invariance properties can be expected to hold, in general.

\subsection{A Review of \cite{SZ}}  In this section, we review the aspects of \cite{SZ} most relevant to our arguments.  Observe that this summary is by no means complete, as considerably more was achieved in their paper than we mention here.

We are interested in the long term behavior of the equation, for a fixed, H\"older continuous function $f:\mathbb{R}^d\rightarrow\mathbb{R}$, \begin{equation}\label{review_eq}\left\{\begin{array}{ll} u_t-\frac{1}{2}\tr(A(x,\omega)D^2u)+b(x,\omega)\cdot Du=0 & \textrm{on}\;\;\mathbb{R}^d\times(0,\infty), \\ u=f(x) & \textrm{on}\;\;\mathbb{R}^d\times\left\{0\right\}.\end{array}\right.\end{equation} This is essentially achieved by comparing the solutions of (\ref{review_eq}) to the solution of the deterministic problem, for $\overline{\alpha}>0$ identified in Theorem \ref{effectivediffusivity}, \begin{equation}\label{review_hom}\left\{\begin{array}{ll} \overline{u}_t-\frac{\overline{\alpha}}{2}\Delta \overline{u}=0 & \textrm{on}\;\;\mathbb{R}^d\times(0,\infty), \\ \overline{u}=f(x) & \textrm{on}\;\;\mathbb{R}^d\times\left\{0\right\},\end{array}\right.\end{equation} along an increasing sequence of length and time scales.

The constant $\overline{\alpha}$ determining (\ref{review_hom}) is identified in \cite{SZ} through a process we describe after introducing some notation.  Fix the dimension \begin{equation}\label{dimension} d\geq 3, \end{equation} and fix a H\"older exponent \begin{equation}\label{Holderexponent} \beta\in\left(0,\frac{1}{2}\right]\;\;\textrm{and a constant}\;\;a\in \left(0,\frac{\beta}{1000d}\right]. \end{equation}

Let $L_0$ be a large integer multiple of five.  For each $n\geq 0$, inductively define \begin{equation}\label{L} \ell_n=5\left[\frac{L_n^a}{5}\right]\;\;\textrm{and}\;\;L_{n+1}=\ell_n L_n, \end{equation} so that, for $L_0$ sufficiently large, we have $\frac{1}{2}L_n^{1+a}\leq L_{n+1}\leq 2L_n^{1+a}$.  For each $n\geq 0$, for $c_0>0$, let \begin{equation}\label{kappa} \kappa_n=\exp(c_0(\log\log(L_n))^2)\;\;\textrm{and}\;\;\tilde{\kappa}_n=\exp(2c_0(\log\log(L_n))^2),\end{equation} where we remark that, as $n$ tends to infinity, $\kappa_n$ is eventually dominated by every positive power of $L_n$.  Furthermore, define, for each $n\geq 0$, \begin{equation}\label{D} D_n=L_n\kappa_n\;\;\textrm{and}\;\;\tilde{D}_n=L_n\tilde{\kappa}_n.\end{equation}  We choose $L_0$ sufficiently large such that, for each $n\geq 0$, \begin{equation}\label{D_1} L_n<D_n< \tilde{D}_n< L_{n+1},\;\;4\tilde{\kappa}_n<\tilde{\kappa}_{n+1}\;\;\textrm{and}\;\; 3\tilde{D}_{n+1} < L_{n+1}^2.\end{equation}

The following constants enter into the probabilistic statements below.  Fix $m_0\geq 2$ satisfying \begin{equation}\label{m0} (1+a)^{m_0-2}\leq 100<(1+a)^{m_0-1}, \end{equation}  and $\delta>0$ and $M_0>0$ satisfying \begin{equation}\label{delta} \delta=\frac{5}{32}\beta\;\;\textrm{and}\;\;M_0\geq100d(1+a)^{m_0+2}.\end{equation}  In the arguments to follow, we will use the fact that $\delta$ and $M_0$ are sufficiently larger than $a$.

We now describe the identification of $\overline{\alpha}$.  Recall, for each $x\in\mathbb{R}^d$ and $\omega\in\Omega$, the quenched law $P_{x,\omega}$ on $\C([0,\infty);\mathbb{R}^d)$ and, for each $x\in\mathbb{R}^d$, the annealed law $\mathbb{P}_x$ on $\Omega\times\C([0,\infty);\mathbb{R}^d)$.  The constant $\overline{\alpha}$ is effectively identified as the limit of the effective diffusivities, in average, of the ensemble of equations (\ref{review_eq}) along the sequence of time steps $L_n^2$.  However, so as to apply the finite range dependence, see (\ref{finitedep}), the stopping time \begin{equation}\label{stopping} T_n=\inf\left\{\;s\geq0\;|\;\abs{X_s-X_0}\geq\tilde{D}_n\;\right\}\end{equation} is introduced, for each $n\geq 0$, and the approximate effective diffusivity of ensemble (\ref{review_eq}) is defined as \begin{equation}\label{alphan} \alpha_n=\frac{1}{dL_n^2}\mathbb{E}_0[\abs{X_{T_n\wedge L_n^2}}^2].\end{equation}  The following theorem describes the control and convergence of the $\alpha_n$ to $\overline{\alpha}$.

\begin{thm}\label{effectivediffusivity} Assume (\ref{steady}).  There exists $L_0$ and $c_0$ sufficiently large and $\eta_0>0$ sufficiently small such that, for all $n\geq 0$, $$\frac{1}{2\nu}\leq \alpha_n\leq 2\nu\;\;\textrm{and}\;\;\abs{\alpha_{n+1}-\alpha_n}\leq L_n^{-(1+\frac{9}{10})\delta},$$  which implies the existence of $\overline{\alpha}>0$ satisfying $$\frac{1}{2\nu}\leq \overline{\alpha}\leq 2\nu\;\;\textrm{and}\;\;\lim_{n\rightarrow\infty}\alpha_n=\overline{\alpha}.$$\end{thm}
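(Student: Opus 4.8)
The plan is to establish a quantitative iteration scheme relating $\alpha_{n+1}$ to $\alpha_n$, from which both the two-sided bound and the Cauchy estimate follow. First I would record the elementary \emph{a priori} bound: since $A$ is uniformly elliptic, a standard comparison of the quenched process $X_s$ with time-changed Brownian motions (or a direct It\^o computation applied to $\abs{X_s-X_0}^2$, using (\ref{elliptic}) and (\ref{bounded})) gives $\frac{1}{\nu}\,t \lesssim \mathbb{E}_0[\abs{X_{T_n\wedge t}}^2] \lesssim \nu\, t$ up to lower-order drift contributions controlled by (\ref{perturbation}); choosing $\eta_0$ small absorbs the drift terms, yielding $\frac{1}{2\nu}\le\alpha_n\le 2\nu$ for every $n$. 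The nontrivial content is the increment bound $\abs{\alpha_{n+1}-\alpha_n}\le L_n^{-(1+\frac{9}{10})\delta}$.

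For that increment bound, the key idea is a renormalization / coupling argument at the scale $L_n$. One decomposes the path up to time $L_{n+1}^2 \wedge T_{n+1}$ into roughly $\ell_n^2$ excursions, each of length comparable to $L_n^2$ and spatial displacement $O(\tilde D_n)$; between excursions the finite-range dependence (\ref{finitedep}), which applies because the stopping time $T_n$ caps displacements at $\tilde D_n < L_{n+1}$ and (\ref{D_1}) guarantees the needed separation, makes the increments approximately independent. Using (\ref{annealed}) and (\ref{annealed1}) — the translation and restricted-rotational invariance of the annealed law — one shows that the conditional covariance matrix of each increment is, up to a small error, a scalar multiple $\alpha_n L_n^2 \, I$ of the identity (isotropy forces the off-diagonal and anisotropic parts to vanish in law). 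Summing the $\approx \ell_n^2$ nearly-independent, nearly-isotropic increments and renormalizing by $dL_{n+1}^2 = d\,\ell_n^2 L_n^2$ then reproduces $\alpha_n$ up to the accumulated error. The error has three sources: (i) the discrepancy between $T_{n+1}\wedge L_{n+1}^2$ and a clean sum of $\ell_n^2$ independent excursions, i.e. the probability the walk exits $B_{\tilde D_{n+1}}$ prematurely or a sub-excursion runs long — controlled by the exit estimates of \cite{SZ} (of the type in Control \ref{localization}); (ii) the coupling error between the true increments and genuinely independent copies, controlled using finite-range dependence and the Brownian coupling at scale $L_n$ (Control \ref{Holder}); (iii) the replacement of $T_n$-truncated second moments by untruncated ones, again a tail estimate. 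Each is engineered in \cite{SZ} to be $\le L_n^{-(1+\frac{9}{10})\delta}$ given the choices (\ref{Holderexponent}), (\ref{delta}), (\ref{m0}) and $L_0$, $c_0$ large, $\eta_0$ small; this is precisely where $\delta$ and $M_0$ being ``sufficiently larger than $a$'' is used, since the number $\ell_n^2 \approx L_n^{2a}$ of excursions must not overwhelm the per-excursion error.

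Granting the increment bound, the convergence is immediate: since $L_{n+1}\ge \frac12 L_n^{1+a}$, the scales $L_n$ grow doubly exponentially, so $\sum_n L_n^{-(1+\frac{9}{10})\delta} < \infty$, hence $(\alpha_n)$ is Cauchy and converges to some $\overline{\alpha}$; passing to the limit in $\frac{1}{2\nu}\le\alpha_n\le 2\nu$ gives $\frac{1}{2\nu}\le\overline{\alpha}\le 2\nu$, and in particular $\overline{\alpha}>0$. The main obstacle is step (i)--(iii) of the second paragraph — making the ``sum of nearly-independent nearly-isotropic excursions'' heuristic quantitative with an error beating $L_n^{-(1+\frac{9}{10})\delta}$ — but this is exactly the content of the renormalization scheme of \cite{SZ}, so in the present paper the theorem is quoted rather than reproved; I would simply cite \cite{SZ} for the increment estimate and supply only the elementary \emph{a priori} bound and the summation argument in detail.
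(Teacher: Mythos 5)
The paper offers no proof of this theorem: it appears in the review section as a result imported directly from \cite{SZ}, and your proposal ultimately does the same thing --- it defers the hard increment estimate $\abs{\alpha_{n+1}-\alpha_n}\leq L_n^{-(1+\frac{9}{10})\delta}$ to the renormalization scheme of \cite{SZ} and supplies in detail only the elementary a priori bound and the summation argument. Those supplementary arguments are correct (in particular $L_{n+1}\geq\tfrac{1}{2}L_n^{1+a}$ makes $\sum_n L_n^{-(1+\frac{9}{10})\delta}$ summable, so the sequence is Cauchy and the two-sided bound passes to the limit), so your treatment is consistent with the paper's.
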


We now describe the comparison between solutions of (\ref{review_eq}) and (\ref{review_hom}).  First, we compare solutions of (\ref{review_eq}), for each $n\geq 0$, at time $L_n^2$, with respect to a H\"{o}lder norm at scale $L_n$, to solutions of the deterministic problem \begin{equation}\label{review_approximate}\left\{\begin{array}{ll} u_{n,t}-\frac{\alpha_n}{2}\Delta u_n=0 & \textrm{on}\;\;\mathbb{R}^d\times(0,\infty), \\ u_{n,t}=f(x) & \textrm{on}\;\;\mathbb{R}^d\times\left\{0\right\}.\end{array}\right. \end{equation}  To do so, we introduce, for each $n\geq 0$, the rescaled H\"older norm \begin{equation}\label{levelholder} \abs{u_0}_n=\sup_{x\in\mathbb{R}^d}\abs{u_0(x)}+L_n^\beta\sup_{x\neq y}\frac{\abs{u_0(x)-u_0(y)}}{\abs{x-y}^\beta}.\end{equation}

We will obtain a localized control of the difference between solutions of (\ref{review_eq}) and (\ref{review_approximate}) at time $L_n^2$.  This localization is obtained via a cutoff function.  For each $v>0$, let\begin{equation}\label{cutoff} \chi(y)=1\wedge(2-\abs{y})_+\;\;\textrm{and}\;\;\chi_{v}(y)=\chi\left(\frac{y}{v}\right), \end{equation} and define, for each $x\in\mathbb{R}^d$ and $n\geq 0$, \begin{equation}\label{cutoff1}  \chi_{n,x}(y)=\chi_{30\sqrt{d}L_n}(y-x).\end{equation}  The following result then describes the desired comparison between solutions of (\ref{review_eq}) and (\ref{review_approximate}), at time $L_n^2$, for H\"older continuous initial data.

\begin{con}\label{Holder}  Fix $x\in\mathbb{R}^d$, $\omega\in\Omega$ and $n\geq 0$.  Let $u$ and $u_n$ respectively denote the solutions of (\ref{review_eq}) and (\ref{review_approximate}) corresponding to initial data $f\in\C^{0,\beta}(\mathbb{R}^d)$.  We have $$\abs{\chi_{n,x}(y)\left(u(y,L_n^2)-u_n(y,L_n^2)\right)}_n\leq L_n^{-\delta}\abs{f}_n.$$\end{con}

Notice that this control depends upon $x\in\mathbb{R}^d$, $\omega\in\Omega$ and $n\geq 0$.  It is not true, in general, that this type of contraction is available for all such triples $(x,\omega,n)$.  However, as described below, it is shown in \cite{SZ} that such controls are available for large $n$, with high probability, on a large portion of space.

The final control we will use concerns tail-estimates for the diffusion process.  We wish to control, under $P_{x,\omega}$, for $X_t\in\C([0,\infty);\mathbb{R}^d)$, the probability that \begin{equation}\label{star} X_t^*=\max_{0\leq s\leq t}\abs{X_s-X_0}\end{equation} is large with respect to the time elapsed.  The desired control contained in the following proposition is similar to the standard exponential estimates for Brownian motion for large length scales.

\begin{con}\label{localization}  Fix $x\in\mathbb{R}^d$, $\omega\in\Omega$ and $n\geq 0$.  For each $v\geq D_n$, for all $\abs{y-x}\leq 30\sqrt{d}L_n$, $$P_{y,\omega}(X^*_{L_n^2}\geq v)\leq \exp\left(-\frac{v}{D_n}\right).$$\end{con}

As with Control \ref{Holder}, this control depends upon $x\in\mathbb{R}^d$, $\omega\in\Omega$ and $n\geq 0$.  It is not true, in general, that this type of localization control is available for all such triples $(x,\omega,n)$, but it is shown in \cite{SZ} that such controls are available for large $n$, with high probability, on a large portion of space.

We now introduce the primary probabilistic statement concerning Controls \ref{Holder} and \ref{localization}.  Notice that the event defined below does not include the control of traps described in \cite{SZ}, which play in important role in propagating Control \ref{Holder} in their arguments.  Since we simply use the H\"older control there obtained, we do not require a further use of their control of traps.

Consider, for each $x\in\mathbb{R}^d$, the event \begin{equation}\label{mainevent} \mathcal{B}_n(x)=\left\{\;\omega\in\Omega\;|\;\textrm{Controls \ref{Holder} and \ref{localization} hold for the triple}\;(x,\omega,n).\;\right\}.\end{equation}  Notice that, in view of (\ref{stationary}), for all $x\in\mathbb{R}^d$ and $n\geq 0$, \begin{equation}\label{mainevent1}\mathbb{P}(B_n(x))=\mathbb{P}(B_n(0)).\end{equation} It is therefore shown that the probability of the compliment of $B_n(0)$ approaches zero as $n$ tends to infinity.

\begin{thm}\label{induction}  Assume (\ref{steady}).  There exist $L_0$ and $c_0$ sufficiently large and $\eta_0>0$ sufficiently small such that, for each $n\geq 0$, $$\mathbb{P}\left(\Omega\setminus B_n(0)\right)\leq L_n^{-M_0}.$$\end{thm}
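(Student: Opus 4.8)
Since $B_n(0)$ as defined in (\ref{mainevent}) is implied by the scale-$n$ ``good'' renormalization event constructed in \cite{SZ}, this estimate is in essence one of the main inductive bounds of that paper, and the plan is to reproduce its structure: I would argue by induction on $n$ along the geometric sequence of scales $L_n$ of (\ref{L}), fixing $L_0$, $c_0$ large and $\eta_0$ small once and for all at the end. For the base case, when $n$ lies below the fixed threshold $m_0$ of (\ref{m0}) the scales $L_n$ are bounded by a constant, the coefficients lie within $\eta_0$ of those of the heat equation by (\ref{perturbation}), and $\alpha_n$ is within $O(\eta_0)$ of $1$ by its definition (\ref{alphan}); classical interior parabolic Schauder estimates, Aronson-type Gaussian bounds, and the exponential martingale inequality for (\ref{sde}) then show that, for $\eta_0$ small depending on $L_0$ and $c_0$, Control \ref{Holder} and Control \ref{localization} hold at $(0,\omega,n)$ for \emph{every} $\omega$, so $\mathbb{P}(\Omega\setminus B_n(0))=0$ in this range.

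For the inductive step, assume the bound for all $k\le n$. The deterministic heart is the following: the evolution governing (\ref{review_eq}) over $[0,L_{n+1}^2]$ is decomposed into $\ell_n^2$ sub-evolutions of duration $L_n^2$, and on each of these, at every point of a grid of mesh $\sim L_n$ covering the region $\{|y|\le C\tilde D_{n+1}\}$ reachable by time $L_{n+1}^2$, one applies Control \ref{Holder} at scale $n$ wherever the corresponding scale-$n$ good event holds and patches the local heat-equation comparisons together using the cutoffs $\chi_{n,x}$ of (\ref{cutoff1}) as an approximate partition of unity. Theorem \ref{effectivediffusivity}, and in particular the slack built into $|\alpha_{n+1}-\alpha_n|\le L_n^{-(1+\frac{9}{10})\delta}$, is what permits replacing the running reference coefficient $\alpha_n$ by $\alpha_{n+1}$ while keeping the error accumulated over the $\ell_n^2$ steps below $L_{n+1}^{-\delta}$, and Control \ref{localization} at scale $n+1$ follows from its scale-$n$ counterpart by a union bound over the $O(\ell_n^2)$ time steps and shifted base points. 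The conclusion is an inclusion of $\Omega\setminus B_{n+1}(0)$ into the event that the scale-$n$ good event fails on a \emph{connected cluster of several} grid boxes inside $\{|y|\le C\tilde D_{n+1}\}$, an isolated scale-$n$ defect being harmless because the comparison can be routed around it.

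The probabilistic estimate is then a Peierls-type count. By (\ref{stationary}) each scale-$n$ bad box has probability $\le L_n^{-M_0}$, and the scale-$n$ good event at $x$ is measurable with respect to the coefficients in $B_{C\tilde D_n}(x)$ — this locality being precisely the purpose of the stopping time $T_n$ in (\ref{stopping}) and the truncation in (\ref{alphan}) — so by (\ref{finitedep}) bad boxes at base points separated by more than $R+2C\tilde D_n$ are independent. Bounding $\mathbb{P}(\Omega\setminus B_{n+1}(0))$ by the number of connected clusters (polynomial in $L_n$) times $(L_n^{-M_0})^2$ times the $O(L_{n+1}^d\tilde\kappa_{n+1}^d)$ possible locations, and using $L_{n+1}\le 2L_n^{1+a}$, the fact that $\tilde\kappa_{n+1}$ is eventually dominated by every positive power of $L_n$, and the largeness of $M_0$ relative to $d$ and $a$ from (\ref{delta}), gives a bound of the form $C\,L_n^{(1+a)d+o(1)-2M_0}\le L_{n+1}^{-M_0}$ for $L_0$ large, closing the induction.

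The main obstacle is the inductive step. A plain union bound over the $O(L_{n+1}^d)$ scale-$L_n$ boxes yields only $C L_{n+1}^d L_n^{-M_0}$, which \emph{exceeds} $L_{n+1}^{-M_0}$, so the entire gain must come from robustness of the scale-$(n+1)$ comparison to isolated scale-$n$ failures. Establishing that robustness — that the heat-equation comparison can be propagated across a single bad box — together with the delicate tracking of Hölder exponents needed to collapse the $\ell_n^2$-fold composition into a single $L_{n+1}^{-\delta}$ loss, is the technical core of the argument and is exactly where the estimates of \cite{SZ} are invoked.
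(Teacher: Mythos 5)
The paper does not prove this statement at all: Theorem \ref{induction} appears in the subsection ``A Review of \cite{SZ}'' and is imported verbatim from Sznitman--Zeitouni, where it is (a slight weakening of) the main inductive proposition of that paper. So there is no internal proof to compare against; the paper's ``proof'' is the citation. Judged as a standalone argument, your proposal has a genuine gap rather than being a complete alternative route.

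The gap is that the two steps you yourself identify as the technical core --- (i) the robustness of the scale-$(n+1)$ H\"older comparison to an isolated scale-$n$ defect, and (ii) the collapse of the $\ell_n^2$-fold composition of local comparisons into a single loss of $L_{n+1}^{-\delta}$ --- are exactly the content of Theorem \ref{induction}, and you close them by ``invoking the estimates of \cite{SZ}.'' That makes the argument circular as a proof and reduces it to a plausibility sketch of the citation. Moreover, the sketch omits an ingredient the paper explicitly flags as essential: the event $\mathcal{B}_n(x)$ in (\ref{mainevent}) deliberately excludes the ``control of traps'' of \cite{SZ}, and the paper remarks that this trap control ``plays an important role in propagating Control \ref{Holder} in their arguments.'' In other words, the actual induction in \cite{SZ} does not run on the pair (Control \ref{Holder}, Control \ref{localization}) alone; it carries additional hypotheses quantifying how long the diffusion can linger in bad regions, without which an ``isolated defect'' cannot simply be routed around --- the diffusion may spend a macroscopic fraction of the time interval $[0,L_{n+1}^2]$ inside it. Your Peierls count is also optimistic on measurability: Control \ref{Holder} at $(x,\omega,n)$ concerns the solution at time $L_n^2$ on $B_{30\sqrt{d}L_n}(x)$ with initial data supported on all of $\mathbb{R}^d$, so it is not exactly $\sigma_{B_{C\tilde D_n}(x)}$-measurable; one must first replace it by a localized surrogate (as in (\ref{o_localized})) and control the replacement error before (\ref{finitedep}) yields the independence of well-separated bad boxes. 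The base case and the final arithmetic $(1+a)d-2M_0\le -(1+a)M_0$ are fine, but the heart of the theorem is precisely what remains unproved.
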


We henceforth fix the constants $L_0$, $c_0$ and $\eta_0$ appearing above.  \begin{equation}\label{constants} \textrm{Fix constants}\;L_0, c_0\;\textrm{and}\;\eta_0\;\textrm{satisfying (\ref{D_1}) and the hypothesis of Theorems \ref{effectivediffusivity} and \ref{induction}.}\end{equation}

We conclude this section with a few basic observations concerning Control \ref{Holder}, Control \ref{localization} and the H\"older norms introduced in (\ref{levelholder}).  Since Control \ref{Holder} cannot be expected to hold globally in space, it will be frequently necessary to introduce cutoff functions of the type appearing in (\ref{cutoff}).  The primary purpose of Control \ref{localization} is to bound the error we introduce, as seen in the following proposition.

\begin{prop}\label{local11}  Assume (\ref{steady}) and (\ref{constants}).  Fix $x\in\mathbb{R}^d$, $\omega\in\Omega$ and $n\geq 0$ and suppose that Control \ref{localization} is satisfied for the triple $(x,\omega,n)$.  For $f\in L^\infty(\mathbb{R}^d)$ satisfying $$\d\left(\Supp(f), B_{30\sqrt{d}L_n}(x)\right)\geq D_n+30\sqrt{d}L_n,$$ let $u(y,t)$ satisfy (\ref{review_eq}) with initial data $f(y)$.  Then, for each $\abs{y-x}\leq 30\sqrt{d}L_n$, $$\abs{u(y,L_n^2)}\leq \exp\left(-\frac{\d(\Supp(f),y)}{D_n}\right)\norm{f}_{L^\infty(\mathbb{R}^d)}.$$\end{prop}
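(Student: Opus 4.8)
The plan is to represent the solution $u(y, L_n^2)$ via the quenched diffusion and then apply Control \ref{localization} to estimate the contribution of the support of $f$. By the Feynman--Kac / martingale problem representation, since $f \in L^\infty(\mathbb{R}^d)$ and the equation (\ref{review_eq}) has no zeroth-order term, we have for each $\abs{y-x}\leq 30\sqrt{d}L_n$,
\begin{equation*}
u(y, L_n^2) = E_{y,\omega}\left(f(X_{L_n^2})\right) = E_{y,\omega}\left(f(X_{L_n^2}){\bf 1}_{\{X_{L_n^2} \in \Supp(f)\}}\right),
\end{equation*}
where the second equality uses that $f$ vanishes off its support. Taking absolute values and using $\abs{f}\leq \norm{f}_{L^\infty(\mathbb{R}^d)}$ pointwise,
\begin{equation*}
\abs{u(y, L_n^2)} \leq \norm{f}_{L^\infty(\mathbb{R}^d)}\, P_{y,\omega}\left(X_{L_n^2} \in \Supp(f)\right).
\end{equation*}

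Next I would observe that in order for the path started at $y$ to reach $\Supp(f)$ by time $L_n^2$, it must travel a Euclidean distance at least $\d(y, \Supp(f))$ from its starting point, so the event $\{X_{L_n^2}\in\Supp(f)\}$ is contained in $\{X^*_{L_n^2}\geq \d(y,\Supp(f))\}$, with $X^*_{L_n^2}$ as in (\ref{star}). The hypothesis $\d(\Supp(f), B_{30\sqrt{d}L_n}(x))\geq D_n+30\sqrt{d}L_n$ together with $\abs{y-x}\leq 30\sqrt{d}L_n$ forces $\d(y, \Supp(f)) \geq D_n$ (indeed $\d(y,\Supp(f)) \geq \d(\Supp(f), B_{30\sqrt d L_n}(x)) \geq D_n + 30\sqrt d L_n > D_n$), which is exactly the range $v = \d(y,\Supp(f)) \geq D_n$ in which Control \ref{localization} is valid for the triple $(x,\omega,n)$ and for the point $y$ with $\abs{y-x}\leq 30\sqrt d L_n$. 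Applying that control with $v = \d(\Supp(f), y)$ gives
\begin{equation*}
P_{y,\omega}\left(X^*_{L_n^2} \geq \d(\Supp(f), y)\right) \leq \exp\left(-\frac{\d(\Supp(f), y)}{D_n}\right),
\end{equation*}
and combining the last three displays yields the claimed bound.

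The argument is essentially routine given the tools already assembled, so there is no serious obstacle; the one point requiring a little care is the justification of the probabilistic representation $u(y,t) = E_{y,\omega}(f(X_t))$ for merely bounded (not continuous) initial data $f$. This can be handled either by noting that under the ellipticity and regularity assumptions (\ref{bounded}), (\ref{Lipschitz}), (\ref{elliptic}) the transition density of $X_t$ is absolutely continuous, so the value of $u(y,t)$ is insensitive to modifications of $f$ on null sets and the standard representation for the Cauchy problem extends by approximation, or by appealing directly to the well-posedness of the martingale problem from \cite{SV}. A secondary bookkeeping point is to confirm that all the geometry in the previous paragraph is consistent — in particular that $D_n$ as defined in (\ref{D}) is the same quantity appearing in both the hypothesis and in Control \ref{localization} — which it is.
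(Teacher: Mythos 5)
Your argument is correct and is essentially identical to the paper's proof: both use the representation $u(y,L_n^2)=E_{y,\omega}(f(X_{L_n^2}))$, observe that reaching $\Supp(f)$ forces $X^*_{L_n^2}\geq \d(\Supp(f),y)\geq D_n$, and then apply Control \ref{localization}. The extra remarks about extending the probabilistic representation to bounded measurable data are fine but not needed beyond what the paper already cites from \cite{Fr}.
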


\begin{proof}  The proof is immediate from the representation formula for the solution.  We have, for each $y\in\mathbb{R}^d$, $$u(y,L_n^2)=P_{y,\omega}\left(f(X_{L_n^2})\right).$$  Therefore, $$\abs{u(y,L_n^2)}\leq P_{y,\omega}\left(X_{L_n^2}^*\geq \d(\Supp(f),y)\right)\norm{f}_{L^\infty(\mathbb{R}^d)}.$$  Since $\d(\Supp(f), B_{30\sqrt{d}L_n}(x))\geq D_n+30\sqrt{d}L_n$, and since Control \ref{localization} is satisfied for the triple $(x,\omega,n)$, this implies that, for all $\abs{y-x}\leq 30\sqrt{d}L_n$, $$\abs{u(y,L_n^2)}\leq \exp\left(-\frac{\d(\Supp(f),y)}{D_n}\right)\norm{f}_{L^\infty(\mathbb{R}^d)},$$ which completes the argument.  \end{proof}

The following two elementary propositions will be used to extend Control \ref{Holder} to a larger portion of space.  The first is an elementary and well-known fact concerning the product of H\"older continuous functions.

\begin{prop}\label{prelim_product}  For each $n\geq 0$, for every $f,g\in\C^{0,\beta}(\mathbb{R}^d)$, $$\abs{fg}_n\leq \abs{f}_n\abs{g}_n.$$\end{prop}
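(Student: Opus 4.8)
The plan is to unwind the definition (\ref{levelholder}) of the rescaled norm into its two constituent pieces and treat them separately. Writing $\norm{f}_{L^\infty} = \sup_{x\in\Rd}\abs{f(x)}$ and $[f]_\beta = \sup_{x\neq y}\abs{f(x)-f(y)}/\abs{x-y}^\beta$, so that $\abs{f}_n = \norm{f}_{L^\infty} + L_n^\beta [f]_\beta$, the statement reduces to two standard submultiplicativity-type estimates: $\norm{fg}_{L^\infty}\leq \norm{f}_{L^\infty}\norm{g}_{L^\infty}$, which is immediate, and a bound on the Hölder seminorm of a product.

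For the seminorm, I would add and subtract $f(x)g(y)$ and use the triangle inequality:
$$\abs{f(x)g(x)-f(y)g(y)} \leq \abs{f(x)}\,\abs{g(x)-g(y)} + \abs{g(y)}\,\abs{f(x)-f(y)},$$
and dividing by $\abs{x-y}^\beta$ and taking the supremum over $x\neq y$ yields $[fg]_\beta \leq \norm{f}_{L^\infty}[g]_\beta + \norm{g}_{L^\infty}[f]_\beta$.

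Combining the two, I would estimate
$$\abs{fg}_n = \norm{fg}_{L^\infty} + L_n^\beta[fg]_\beta \leq \norm{f}_{L^\infty}\norm{g}_{L^\infty} + L_n^\beta\norm{f}_{L^\infty}[g]_\beta + L_n^\beta\norm{g}_{L^\infty}[f]_\beta,$$
and then compare this with the expansion
$$\abs{f}_n\abs{g}_n = \norm{f}_{L^\infty}\norm{g}_{L^\infty} + L_n^\beta\norm{f}_{L^\infty}[g]_\beta + L_n^\beta\norm{g}_{L^\infty}[f]_\beta + L_n^{2\beta}[f]_\beta[g]_\beta.$$
The first three terms agree, and the extra term $L_n^{2\beta}[f]_\beta[g]_\beta$ is nonnegative, so $\abs{fg}_n\leq\abs{f}_n\abs{g}_n$ follows. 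There is no real obstacle here; the only point requiring a moment's care is the bookkeeping that the cross terms are exactly those produced by the product of the two norms, with the single leftover term having a favorable sign.
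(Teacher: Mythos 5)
Your argument is correct and is essentially the same as the paper's: both split $\abs{\cdot}_n$ into the sup norm and the rescaled H\"older seminorm, apply the triangle inequality to $f(x)g(x)-f(y)g(y)$ to get the product rule for the seminorm, and observe that the resulting three terms are dominated by the expansion of $\abs{f}_n\abs{g}_n$. Nothing further is needed.
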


\begin{proof}  Fix $n\geq 0$ and $f,g\in\C^{0,\beta}(\mathbb{R}^d)$.  For every $x,y\in\mathbb{R}^d$, the triangle inequality implies $$\abs{f(x)g(x)-f(y)g(y)}\leq \abs{f(x)}\abs{g(x)-g(y)}+\abs{g(y)}\abs{f(x)-f(y)}.$$  Therefore, $$\sup_{x\neq y}L_n^\beta\frac{\abs{f(x)g(x)-f(y)g(y)}}{{\abs{x-y}^\beta}}\leq \norm{f}_{L^\infty(\mathbb{R}^d)}\sup_{x\neq y}L_n^\beta\frac{\abs{g(x)-g(y)}}{\abs{x-y}^\beta}+\norm{g}_{L^\infty(\mathbb{R}^d)}\sup_{x\neq y}L_n^\beta\frac{\abs{f(x)-f(y)}}{\abs{x-y}^\beta}.$$  And, since $$\norm{fg}_{L^\infty(\mathbb{R}^d)}\leq \norm{f}_{L^\infty(\mathbb{R}^d)}\norm{g}_{L^\infty(\mathbb{R}^d)},$$ we conclude that $$\abs{fg}_n\leq\norm{f}_{L^\infty(\mathbb{R}^d)}\abs{g}_n+\norm{g}_{L^\infty(\mathbb{R}^d)}\sup_{x\neq y}L_n^\beta\frac{\abs{f(x)-f(y)}}{\abs{x-y}^\beta}\leq \abs{f}_n\abs{g}_n,$$ which completes the argument.  \end{proof}

The final proposition will play the most important role in extending Control \ref{Holder}.  The only observation is that the H\"older norms introduced in (\ref{levelholder}) occur at the length scale $L_n$.  Therefore, a function agreeing locally with H\"older continuous functions on scale $L_n$ must itself be globally H\"older continuous.

\begin{prop}\label{prelim_extension}  Let $I$ be an arbitrary index and $n\geq 0$.  If $f:\mathbb{R}^d\rightarrow\mathbb{R}$ and $\left\{g_i:\mathbb{R}^d\rightarrow\mathbb{R}\right\}_{i\in I}$ are such that, for a collection $\left\{x_i\right\}_{i\in I}\subset \mathbb{R}^d$,\begin{equation}\label{Holder2}f=g_i\;\;\textrm{on}\;\;B(x_i, 20\sqrt{d}L_n) \;\;\textrm{and}\;\; \Supp(f)\subset \bigcup_{i\in I}B(x_i,10\sqrt{d}L_n),\end{equation} then $$\abs{f}_n\leq 3\sup_{i\in I}\abs{g_i}_n.$$\end{prop}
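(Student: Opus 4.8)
The plan is to bound the two parts of the norm $\abs{f}_n$ — the sup-norm and the rescaled Hölder seminorm — separately, using the local agreement $f = g_i$ on the balls $B(x_i, 20\sqrt{d}L_n)$. The sup-norm part is immediate: since $\Supp(f) \subset \bigcup_i B(x_i, 10\sqrt{d}L_n)$, every point $x$ where $f(x) \neq 0$ lies in some $B(x_i, 10\sqrt{d}L_n)$, on which $f = g_i$, so $\abs{f(x)} = \abs{g_i(x)} \leq \norm{g_i}_{L^\infty(\mathbb{R}^d)} \leq \sup_i \abs{g_i}_n$; thus $\norm{f}_{L^\infty(\mathbb{R}^d)} \leq \sup_i \abs{g_i}_n$.

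The seminorm part is the real content. Fix $x \neq y$ in $\mathbb{R}^d$ and consider two cases according to whether $\abs{x-y}$ is small or large relative to $L_n$. \textbf{Case 1: $\abs{x-y} \leq 10\sqrt{d}L_n$.} Here I want to find a single index $i$ such that $f = g_i$ on a ball containing both $x$ and $y$. If either $x$ or $y$ lies outside $\Supp(f)$, one has to be a little careful — but one can argue: if $x \in \Supp(f)$ then $x \in B(x_i, 10\sqrt{d}L_n)$ for some $i$, and then $\abs{y - x_i} \leq \abs{y-x} + \abs{x - x_i} < 10\sqrt{d}L_n + 10\sqrt{d}L_n = 20\sqrt{d}L_n$, so both $x, y \in B(x_i, 20\sqrt{d}L_n)$ and $f(x) - f(y) = g_i(x) - g_i(y)$, giving $L_n^\beta \abs{f(x)-f(y)}/\abs{x-y}^\beta \leq \abs{g_i}_n$. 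If neither $x$ nor $y$ is in $\Supp(f)$, then $f(x) = f(y) = 0$ and the quotient vanishes. (The symmetric case with $y \in \Supp(f)$ is identical.) \textbf{Case 2: $\abs{x-y} > 10\sqrt{d}L_n$.} Then $\abs{x-y}^\beta > (10\sqrt{d}L_n)^\beta \geq L_n^\beta$ (using $10\sqrt{d} \geq 1$), so
\[
L_n^\beta \frac{\abs{f(x)-f(y)}}{\abs{x-y}^\beta} \leq \abs{f(x)} + \abs{f(y)} \leq 2\norm{f}_{L^\infty(\mathbb{R}^d)} \leq 2\sup_{i\in I}\abs{g_i}_n.
\]

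Combining the two cases, $L_n^\beta \sup_{x \neq y} \abs{f(x)-f(y)}/\abs{x-y}^\beta \leq 2\sup_i \abs{g_i}_n$, and adding the sup-norm estimate gives $\abs{f}_n \leq 3\sup_i \abs{g_i}_n$. \textbf{The main obstacle} — really the only subtlety — is handling pairs $x, y$ where one or both points fall outside the support: one cannot blindly invoke $f = g_i$, and one must instead exploit that $f$ vanishes there, splitting the small-distance case by whether $x$ or $y$ belongs to $\Supp(f)$. The geometric bookkeeping with the radii $10\sqrt{d}L_n$ versus $20\sqrt{d}L_n$ is exactly what makes the covering-plus-triangle-inequality argument close in Case 1, and the threshold $10\sqrt{d}L_n$ is what makes Case 2 lossless in the $L_n^\beta$ scaling.
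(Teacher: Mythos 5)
Your proof is correct and follows essentially the same strategy as the paper's: bound the sup-norm by local agreement with some $g_i$, then split the Hölder quotient into a large-separation case (bounded by $2\sup_i\abs{g_i}_n$ via the $L^\infty$ bound) and a small-separation case where both points land in a single ball $B(x_i,20\sqrt{d}L_n)$. The only differences are cosmetic — you use the threshold $10\sqrt{d}L_n$ where the paper uses $L_n$, and you are in fact slightly more careful than the paper in treating points outside $\Supp(f)$.
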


\begin{proof}  In view of (\ref{Holder2}), for each $x\in\mathbb{R}^d$ there exists $j\in I$ such that $f(x)=g_j(x)$.  Therefore, \begin{equation}\label{Holder3} \abs{f(x)}=\abs{g_j(x)}\leq \sup_{i\in\mathbb{I}}\abs{g_i}_n.\end{equation}  It remains to bound the H\"{o}lder semi-norm.

If $x,y\in\mathbb{R}^d$ satisfy $\abs{x-y}\geq L_n$, in view of (\ref{Holder2}), for $j,k\in I$ satisfying $f(x)=g_j(x)$ and $f(y)\leq g_k(y)$, \begin{equation}\label{Holder4} L_n^\beta\frac{\abs{f(x)-f(y)}}{\abs{x-y}^\beta}\leq \abs{g_j(x)-g_k(y)}\leq 2\sup_{i\in I}\abs{g_i}_n.\end{equation}  If $\abs{x-y}<L_n$, in view of (\ref{Holder2}), there exists $j\in I$ such that $x,y\in B(x_j,20\sqrt{d}L_n).$  Therefore, for this $j\in I$, \begin{equation}\label{Holder5} L_n^\beta\frac{\abs{f(x)-f(y)}}{\abs{x-y}^\beta}=L_n^\beta\frac{\abs{g_j(x)-g_j(y)}}{\abs{x-y}^\beta}\leq \abs{g_j}_n\leq\sup_{i\in I}\abs{g_i}_n.\end{equation}  The claim follows by combining (\ref{Holder3}), (\ref{Holder4}) and (\ref{Holder5}).  \end{proof}

\section{The Identification of the Invariant Measure}

In order to identify the invariant measure, we will analyze the long term behavior of the solution $u:\mathbb{R}^d\times[0,\infty)\times\Omega\rightarrow\mathbb{R}$ satisfying \begin{equation}\label{o_eq} \left\{\begin{array}{ll} u_t-\frac{1}{2}\tr(A(x,\omega)D^2u)+b(x,\omega)\cdot Du=0 & \textrm{on}\;\;\mathbb{R}^d\times(0,\infty), \\ u=f(x,\omega) & \textrm{on}\;\;\mathbb{R}^d\times\left\{0\right\}.\end{array}\right.\end{equation} Therefore, to simplify the notation in what follows, we write, for each $s\geq 0$ and $\omega\in\Omega$, $$R_sf(x,\omega)=u(x,s,\omega),$$ for $u(x,s,\omega)$ satisfying (\ref{o_eq}) with initial data $f(y,\omega)$.

We will be particularly interested in translations of functions $\tilde{f}\in L^\infty(\Omega)$ with respect to the translation group $\left\{\tau_x\right\}_{x\in\mathbb{R}^d}$ governing the stationarity of the coefficients, and therefore assume in many of the propositions to follow that a function $f:\mathbb{R}^d\times\Omega\rightarrow\mathbb{R}$ is stationary with respect to $\left\{\tau_x\right\}_{x\in\mathbb{R}^d}$.  Precisely, for each $x,y\in\mathbb{R}^d$ and $\omega\in\Omega$, \begin{equation}\label{o_stationary} f(x+y,\omega)=f(x,\tau_y,\omega).\end{equation}

For every $f\in L^\infty(\mathbb{R}^d\times\Omega)$ satisfying (\ref{o_stationary}), we identify a deterministic constant $\overline{\pi}(f)\in\mathbb{R}$ which is effectively identified as the limit of the sequence defined, for each $n\geq 0$, by \begin{equation}\label{o_pi} \mathbb{E}\left(R_{L_n^2}f(0,\omega)\right).\end{equation}  And, for ${\bf{1}}_E:\Omega\rightarrow \mathbb{R}$ the indicator function of a measurable subset $E\in\mathcal{F}$, by taking $f_E(x,\omega)={\bf{1}}_E(\tau_x\omega)$, we define a measure $\pi:\mathcal{F}\rightarrow\mathbb{R}$ on $(\Omega,\mathcal{F})$ by the rule \begin{equation}\label{o_pi_1} \pi(E)=\overline{\pi}(f_E).\end{equation}  We will prove that $\pi$ is a probability measure on $(\Omega,\mathcal{F})$ which is absolutely continuous with respect to $\mathbb{P}$.  And, for every $f\in L^\infty(\mathbb{R}^d\times\Omega)$ satisfying (\ref{o_stationary}), $$\overline{\pi}(f)=\int_{\Omega}f(0,\omega)\;d\pi.$$

The following two propositions describe the basic existence and regularity results concerning equation (\ref{o_eq}) for bounded and stationary initial data.

\begin{prop}\label{o_sol}  Assume (\ref{steady}).  For each $\omega\in\Omega$ and $f\in L^\infty(\mathbb{R}^d\times\Omega)$ there exists a unique solution $u(x,t,\omega):\mathbb{R}^d\times[0,\infty)\times\Omega\rightarrow\mathbb{R}$ of (\ref{o_eq}) satisfying, for each $T>0$ and $\omega\in\Omega$, $u(x,t,\omega)\in\BUC(\mathbb{R}^d\times[0,T])$ with, for each $\omega\in\Omega$, $$\norm{u(x,t,\omega)}_{L^\infty(\mathbb{R}^d\times[0,\infty))}\leq\norm{f(x,\omega)}_{L^\infty(\mathbb{R}^d)}.$$  Furthermore, if $f(x,\omega)$ satisfies (\ref{o_stationary}), then for each $t\geq 0$, the map $u(x,t,\omega):\mathbb{R}^d\times\Omega\rightarrow\mathbb{R}^d$ is stationary.  Precisely, for each $x,y\in\mathbb{R}^d$, $t\geq 0$ and $\omega\in\Omega$, $$u(x,t,\tau_y\omega)=u(x+y,t,\omega).$$ \end{prop}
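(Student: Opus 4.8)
The plan is to realize $u$ through the probabilistic representation attached to (\ref{sde}) and then to deduce stationarity from uniqueness.

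First I would set, for each $\omega\in\Omega$ and $(x,t)\in\mathbb{R}^d\times[0,\infty)$,
\[
u(x,t,\omega)=E_{x,\omega}\big[f(X_t,\omega)\big],
\]
which is well defined and bounded by $\norm{f(\cdot,\omega)}_{L^\infty(\mathbb{R}^d)}$, since the martingale problem associated with the generator is well posed under (\ref{steady}) (see \cite{SV}) and $f(\cdot,\omega)\in L^\infty(\mathbb{R}^d)$; bi-measurability, and in particular measurability in $\omega$, follows from the measurable dependence of $P_{x,\omega}$ on the coefficients. By uniform ellipticity (\ref{elliptic}), for $t>0$ the law of $X_t$ under $P_{x,\omega}$ has a density enjoying two-sided Gaussian bounds and interior H\"older regularity in $(x,t)$; together with the Lipschitz bound (\ref{Lipschitz}) and interior Schauder estimates this gives $u\in\C^{1,2}(\mathbb{R}^d\times(0,\infty))$ solving (\ref{o_eq}) classically on $\{t>0\}$, and $u\in\BUC(\mathbb{R}^d\times[0,T])$ for every $T$ whenever $f(\cdot,\omega)\in\BUC(\mathbb{R}^d)$. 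For general $f\in L^\infty(\mathbb{R}^d\times\Omega)$ one approximates $f(\cdot,\omega)$ by bounded continuous functions with the same sup-norm, passes to the limit using the Gaussian bounds so that the convergence is locally uniform on $\{t>0\}$, and recovers $f$ as $t\downarrow 0$ in the sense appropriate to its regularity. Uniqueness within the class of bounded solutions is the comparison principle for the uniformly parabolic equation (\ref{o_eq}).

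It remains to prove the stationarity identity, and the key observation is that precomposition with a fixed translation turns a solution of (\ref{o_eq}) in the environment $\omega$ into a solution in the environment $\tau_y\omega$. Fix $\omega\in\Omega$ and $y\in\mathbb{R}^d$ and put $v(x,t)=u(x+y,t,\omega)$. Then $v$ is bounded by $\norm{f}_{L^\infty(\mathbb{R}^d\times\Omega)}$, and for $t>0$ one has $v_t(x,t)=u_t(x+y,t,\omega)$, $Dv(x,t)=(Du)(x+y,t,\omega)$ and $D^2v(x,t)=(D^2u)(x+y,t,\omega)$, so that, using that $u$ solves (\ref{o_eq}) in the environment $\omega$ and the stationarity of the coefficients (\ref{stationary}),
\[
v_t-\tfrac12\tr\!\big(A(x,\tau_y\omega)D^2v\big)+b(x,\tau_y\omega)\cdot Dv=\Big[u_t-\tfrac12\tr\!\big(A(\cdot,\omega)D^2u\big)+b(\cdot,\omega)\cdot Du\Big](x+y,t,\omega)=0.
\]
Moreover $v(x,0)=f(x+y,\omega)=f(x,\tau_y\omega)$ by (\ref{o_stationary}) (in the $L^1_{\mathrm{loc}}$ sense for merely bounded data). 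Thus $v$ and $u(\cdot,\cdot,\tau_y\omega)$ are both bounded solutions of (\ref{o_eq}) in the environment $\tau_y\omega$ with initial data $f(\cdot,\tau_y\omega)$, and the uniqueness from the first part forces $v=u(\cdot,\cdot,\tau_y\omega)$, that is $u(x+y,t,\omega)=u(x,t,\tau_y\omega)$ for all $x,y\in\mathbb{R}^d$ and $t\ge 0$. In a viscosity framework the same argument applies verbatim, since $x\mapsto x+y$ is a diffeomorphism carrying viscosity sub- and supersolutions of (\ref{o_eq}) in $\omega$ to those in $\tau_y\omega$.

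The stationarity step is essentially immediate once uniqueness is available; the only delicate point is the regularity assertion for initial data that is merely $L^\infty$, where "$u\in\BUC(\mathbb{R}^d\times[0,T])$" should be read through the smoothing of the parabolic flow — interior regularity and the Aronson-type bounds on $\{t>0\}$ together with the $L^\infty$ bound up to $t=0$ — the closed strip being reached continuously only when $f$ itself is continuous. Carrying out this approximation argument carefully, leaning on the transition-density estimates afforded by (\ref{elliptic}), is the main thing to be verified.
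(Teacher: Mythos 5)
Your proposal is correct and follows essentially the same route as the paper: existence, uniqueness and the $L^\infty$ bound come from standard uniformly parabolic theory (the paper simply cites Friedman where you give the probabilistic representation and Gaussian estimates), and stationarity is deduced exactly as in the paper by observing that $u(x+y,t,\omega)$ and $u(x,t,\tau_y\omega)$ both solve (\ref{o_eq}) in the environment $\tau_y\omega$ and invoking uniqueness. Your caveat about continuity up to $t=0$ for merely bounded initial data is a fair reading of the statement, but it does not change the argument.
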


\begin{proof}  The existence and uniqueness of a solution to (\ref{o_eq}) satisfying the above estimates, for each $\omega\in\Omega$, is an elementary consequence of (\ref{bounded}), (\ref{Lipschitz}) and $f\in L^\infty(\mathbb{R}^d\times\Omega)$.  See, for instance, Friedman \cite{Fr}.  The stationarity is a consequence of (\ref{o_stationary}) and the uniqueness since, for each $\omega\in\Omega$, both $u(x,t,\tau_y\omega)$ and $u(x+y,t,\omega)$ satisfy (\ref{o_eq}) for $\tau_y\omega$.  \end{proof}

\begin{prop}\label{o_weak}  Assume (\ref{steady}).  For each $\omega\in\Omega$, $t\geq 1$ and $g\in L^\infty(\mathbb{R}^d)$, for $C>0$ independent of $\omega\in\Omega$ and $t\geq 1$, $$\norm{R_tg(x,\omega)}_{C^{0,\beta}(\mathbb{R}^d)}\leq C\norm{g}_{L^\infty(\mathbb{R}^d)}.$$\end{prop}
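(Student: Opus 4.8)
The plan is to obtain the $C^{0,\beta}$ estimate as a consequence of standard interior parabolic regularity, applied on the unit time interval $[t-1,t]$, and then to rescale the H\"older seminorm to unit scale using the $L^\infty$ bound from Proposition \ref{o_sol}. Since $g\in L^\infty(\mathbb{R}^d)$, the maximum principle (or the representation $R_sg(x,\omega)=P_{x,\omega}(g(X_s))$) immediately gives $\norm{R_sg(\cdot,\omega)}_{L^\infty(\mathbb{R}^d)}\leq\norm{g}_{L^\infty(\mathbb{R}^d)}$ for all $s\geq 0$. So the function $w(x,s)=R_sg(x,\omega)$ solves the uniformly parabolic equation $w_s-\frac12\tr(A(x,\omega)D^2w)+b(x,\omega)\cdot Dw=0$ on $\mathbb{R}^d\times(0,\infty)$ with coefficients that, by (\ref{bounded}), (\ref{Lipschitz}) and (\ref{elliptic}), are bounded, Lipschitz and uniformly elliptic with constants independent of $\omega$. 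Interior Schauder (or De Giorgi--Nash--Moser) estimates then yield, for any fixed parabolic cylinder $Q_1(x_0,t)=B_1(x_0)\times[t-1,t]$ with $t\geq 1$, a bound of the form $\norm{w}_{C^{0,\beta}(Q_{1/2}(x_0,t))}\leq C\norm{w}_{L^\infty(Q_1(x_0,t))}\leq C\norm{g}_{L^\infty(\mathbb{R}^d)}$, with $C$ depending only on $d$, $\nu$, the Lipschitz constant in (\ref{Lipschitz}), and $\beta$ — in particular independent of $\omega$, of $x_0$ (by translation), and of $t\geq 1$.

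The second step is to pass from these local estimates to the global H\"older bound at unit scale asserted in the statement. For the spatial H\"older seminorm: if $\abs{x-y}\leq 1/2$ both points lie in a common cylinder $Q_{1/2}(x_0,t)$ and the interior estimate directly controls $\abs{w(x,t)-w(y,t)}/\abs{x-y}^\beta$ by $C\norm{g}_{L^\infty}$; if $\abs{x-y}>1/2$ then trivially $\abs{w(x,t)-w(y,t)}/\abs{x-y}^\beta\leq 2^{\beta+1}\norm{w(\cdot,t)}_{L^\infty}\leq 2^{\beta+1}\norm{g}_{L^\infty(\mathbb{R}^d)}$. Combining the two cases with the $L^\infty$ bound on $w(\cdot,t)$ gives $\norm{R_tg(\cdot,\omega)}_{C^{0,\beta}(\mathbb{R}^d)}\leq C\norm{g}_{L^\infty(\mathbb{R}^d)}$ with $C$ independent of $\omega$ and of $t\geq 1$, which is exactly the claim. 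I would cite Friedman \cite{Fr} (already used in Proposition \ref{o_sol}) for the interior parabolic regularity, noting that the hypothesis $t\geq 1$ is precisely what keeps us a fixed distance from the initial time so that no regularity of the initial data $g$ is needed.

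The one point requiring a little care — and the only real obstacle — is making sure the regularity constant is genuinely uniform in $\omega$ and, more importantly, does not degrade as $t\to\infty$. Uniformity in $\omega$ is automatic because the structural constants in (\ref{bounded}), (\ref{Lipschitz}), (\ref{elliptic}) are $\omega$-independent; uniformity in large $t$ follows because the interior estimate on $Q_1(x_0,t)$ only ever looks one unit of time into the past and the equation is autonomous in $t$, so by the time-translation $s\mapsto s-(t-1)$ every such cylinder is equivalent to $B_1(x_0)\times[0,1]$ with data $w(\cdot,t-1)$ bounded by $\norm{g}_{L^\infty}$. Thus there is no cumulative loss, and the proof is complete. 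I do not expect any deeper difficulty; this proposition is a routine regularization statement that is invoked later (e.g. to make sense of applying Control \ref{Holder}, whose hypotheses require $\C^{0,\beta}$ initial data, to functions of the form $R_{L_{n-1}^2}f$).
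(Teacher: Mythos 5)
Your proposal is correct and takes essentially the same route as the paper: both arguments exploit that the flow regularizes $L^\infty$ data to a uniformly-in-$\omega$ Lipschitz/H\"older function over one unit of time (which is exactly where $t\geq 1$ enters), and both then control the seminorm by splitting into $\abs{x-y}$ small, where the unit-time regularity estimate applies, and $\abs{x-y}$ large, where the trivial $L^\infty$ bound suffices. The only cosmetic difference is that the paper implements the unit-time smoothing through the Gaussian bound on $D_xp(x,1,y,\omega)$ for the fundamental solution from Friedman \cite{Fr}, rather than through interior Schauder estimates on the cylinder $B_1(x_0)\times[t-1,t]$; since the coefficients are Lipschitz uniformly in $\omega$, both yield the same $\omega$- and $t$-independent constant.
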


\begin{proof}  Fix $\omega\in\Omega$ and $g\in L^\infty(\mathbb{R}^d)$.  Recall that, for each $t\geq 0$ and $x\in\mathbb{R}^d$, see \cite{Fr}, \begin{equation}\label{o_weak_1}R_tg(x,\omega)=P_{x,\omega}\left(g(X_t)\right)=\int_{\mathbb{R}^d}p(x,t,y,\omega)g(y)\;dy,\end{equation} for $p(x,t,y,\omega):\mathbb{R}^d\times(0,\infty)\times\mathbb{R}^d\rightarrow\mathbb{R}$ satisfying, for each $0<t\leq 1$, for $C>0$ and $c>0$ independent of $\omega$,\begin{equation}\label{o_weak_2} \abs{p(x,t,y,\omega)}\leq Ct^{-d/2}e^{-c\abs{x-y}^2/t}\;\;\textrm{and}\;\;\abs{D_xp(x,t,y,\omega)}\leq Ct^{-(d+1)/2}e^{-c\abs{x-y}^2/t}.\end{equation}

First, we observe that for each $x\in\mathbb{R}^d$ and $t\geq 0$, using (\ref{o_weak_1}), \begin{equation}\label{o_weak_3} \abs{R_tg(x,\omega)}\leq \norm{g}_{L^\infty(\mathbb{R}^d)}.\end{equation}   It remains to bound the H\"older semi-norm.

Whenever $x,y\in\mathbb{R}^d$ satisfy $\abs{x-y}\geq 1$, \begin{equation}\label{o_weak_4} \abs{R_1g(x,\omega)-R_1g(y,\omega)}\leq 2\norm{g}_{L^\infty(\mathbb{R}^d)}\leq 2\abs{x-y}^\beta\norm{g}_{L^\infty(\mathbb{R}^d)}.\end{equation}  And, whenever $x,y\in\mathbb{R}^d$ satisfy $\abs{x-y}<1$, in view of (\ref{o_weak_1}) and (\ref{o_weak_2}), for $C>0$ independent of $\omega\in\Omega$, \begin{equation}\label{o_weak_5}\abs{R_1g(x,\omega)-R_1g(y,\omega)}\leq C\abs{x-y}\norm{g}_{L^\infty(\mathbb{R}^d)}\leq C\abs{x-y}^\beta\norm{g}_{L^\infty(\mathbb{R}^d)}.\end{equation}  Therefore, for each $x,y\in\mathbb{R}^d$ and $t\geq 1$, using (\ref{o_weak_3}), (\ref{o_weak_4}) and (\ref{o_weak_5}), \begin{multline}\label{o_weak_6} \abs{R_tg(x,\omega)-R_tg(y,\omega)}=\abs{R_{t-1}(R_1g(x,\omega)-R_1g(y,\omega))} \\ \leq \sup_{x,y\in\mathbb{R}^d}\abs{R_1g(x,\omega)-R_1g(y,\omega)}\leq C\abs{x-y}^\beta\norm{g}_{L^\infty(\mathbb{R}^d)}.\end{multline} The claim follows from (\ref{o_weak_3}), (\ref{o_weak_4}) and (\ref{o_weak_6}), since $\omega\in\Omega$ and $g\in L^\infty(\mathbb{R}^d)$ were arbitrary.  \end{proof}

Before proceeding with the proof, it is convenient to introduce some useful notation.  We write, for each $n\geq 0$ and $f\in\C^{0,\beta}(\mathbb{R}^d)$, \begin{equation}\label{o_op} R_nf(x,\omega)=u(x,L_n^2),\end{equation} for $u(x,t)$ satisfying $$\left\{\begin{array}{ll} u_t-\frac{1}{2}\tr(A(x,\omega)D^2u)+b(x,\omega)\cdot Du=0 & \textrm{on}\;\;\mathbb{R}^d\times(0,\infty), \\ u=f(x) & \textrm{on}\;\;\mathbb{R}^d\times\left\{0\right\}.\end{array}\right.$$  Similarly, we define, $n\geq 0$ and $f\in\C^{0,\beta}(\mathbb{R}^d)$, \begin{equation}\label{o_hop} \overline{R}_nf(x)=\overline{u}_n(x,t),\end{equation} for $\overline{u}_n(x,t)$ satisfying $$\left\{\begin{array}{ll} \overline{u}_{n,t}-\frac{\alpha_n}{2}\Delta\overline{u}_n=0 & \textrm{on}\;\;\mathbb{R}^d\times(0,\infty), \\ \overline{u}_n=f(x) & \textrm{on}\;\;\mathbb{R}^d\times\left\{0\right\}.\end{array}\right.$$  And, finally, for each $n\geq 0$ and $f\in\C^{0,\beta}(\mathbb{R}^d)$, we define \begin{equation}\label{o_dif} S_nf(x,\omega)=R_nf(x,\omega)-\overline{R}_nf(x).\end{equation}  This allows us to restate Control \ref{Holder} in the following equivalent way, where we recall from (\ref{cutoff1}), for each $x\in\mathbb{R}^d$ and $n\geq 0$, the cutoff function $\chi_{n,x}$.

\begin{con}\label{o_Holder}  Fix $x\in\mathbb{R}^d$, $\omega\in\Omega$ and $n\geq 0$.  For each $f\in\C^{0,\beta}(\mathbb{R}^d)$, $$\abs{\chi_{n,x}S_nf}_n\leq L_n^{-\delta}\abs{f}_n.$$\end{con}

We now make two elementary observations concerning the interaction of the heat kernels $\overline{R}_n$ introduced in (\ref{o_hop}) and the scaled H\"older norms introduced in (\ref{levelholder}), and an observation concerning the localization properties of the kernels $\overline{R}_n$.  Notice that, in the following proposition, we make use of Theorem \ref{effectivediffusivity}, which in particular provides a lower bound for the $\alpha_n$.  This lower bound ensures that the kernels $\overline{R}_n$ provide a sufficient regularization, uniformly in $n\geq 0$, for our arguments to follow.

\begin{prop}\label{o_regular}  Assume (\ref{steady}) and (\ref{constants}).  There exists $C>0$ satisfying, for each $n\geq 0$ and $f\in L^\infty(\mathbb{R}^d)$, $$\abs{\overline{R}_nf}_n\leq C\norm{f}_{L^\infty(\mathbb{R}^d)}.$$\end{prop}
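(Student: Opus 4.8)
The plan is to estimate the rescaled H\"older norm $\abs{\overline{R}_nf}_n$ directly from the heat-kernel representation $\overline{R}_nf(x)=\int_{\mathbb{R}^d}G_{\alpha_n}(x-y,L_n^2)f(y)\,dy$, where $G_\alpha(z,t)=(2\pi\alpha t)^{-d/2}\exp(-\abs{z}^2/(2\alpha t))$ is the fundamental solution of $\partial_t-\tfrac{\alpha}{2}\Delta$. The sup-norm bound $\norm{\overline{R}_nf}_{L^\infty(\mathbb{R}^d)}\leq\norm{f}_{L^\infty(\mathbb{R}^d)}$ is immediate since $G_{\alpha_n}(\cdot,L_n^2)$ is a probability density. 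So the whole content is the H\"older semi-norm at scale $L_n$, i.e.\ bounding $L_n^\beta\sup_{x\neq y}\abs{\overline{R}_nf(x)-\overline{R}_nf(y)}/\abs{x-y}^\beta$.

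First I would handle $\abs{x-y}\geq L_n$ trivially: there $\abs{\overline{R}_nf(x)-\overline{R}_nf(y)}\leq 2\norm{f}_{L^\infty}$ and $L_n^\beta/\abs{x-y}^\beta\leq 1$, giving a contribution bounded by $2\norm{f}_{L^\infty}$. For $\abs{x-y}<L_n$ I would use the gradient bound $\abs{\overline{R}_nf(x)-\overline{R}_nf(y)}\leq\abs{x-y}\sup_z\abs{D\overline{R}_nf(z)}$ together with the standard estimate $\abs{D\overline{R}_nf(z)}\leq\big(\int\abs{D_xG_{\alpha_n}(x-y,L_n^2)}\,dy\big)\norm{f}_{L^\infty}\leq C(\alpha_n L_n^2)^{-1/2}\norm{f}_{L^\infty}$, the constant $C$ being dimensional. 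Hence for $\abs{x-y}<L_n$,
\[
L_n^\beta\frac{\abs{\overline{R}_nf(x)-\overline{R}_nf(y)}}{\abs{x-y}^\beta}\leq L_n^\beta\abs{x-y}^{1-\beta}\,C(\alpha_n L_n^2)^{-1/2}\norm{f}_{L^\infty}\leq L_n\,C(\alpha_nL_n^2)^{-1/2}\norm{f}_{L^\infty}=\frac{C}{\sqrt{\alpha_n}}\norm{f}_{L^\infty},
\]
using $\beta\in(0,\tfrac12]$ and $\abs{x-y}<L_n$. Finally, invoking Theorem \ref{effectivediffusivity}, which gives $\alpha_n\geq\frac{1}{2\nu}$ uniformly in $n$, the factor $1/\sqrt{\alpha_n}$ is bounded by $\sqrt{2\nu}$, so the whole semi-norm term is $\leq C\sqrt{2\nu}\,\norm{f}_{L^\infty}$. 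Adding the sup-norm term yields $\abs{\overline{R}_nf}_n\leq C\norm{f}_{L^\infty(\mathbb{R}^d)}$ with $C$ independent of $n$, as claimed.

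The only subtlety worth flagging is that the statement is for general $f\in L^\infty(\mathbb{R}^d)$, not just H\"older continuous $f$, which is exactly why one must route the argument through the explicit Gaussian kernel and its derivative rather than through any a priori H\"older regularity of $f$; the regularization is entirely supplied by the heat semigroup over the time span $L_n^2$. The one place uniformity in $n$ could have failed is precisely the degeneration $\alpha_n\to 0$, and this is ruled out by the lower bound $\alpha_n\geq\frac{1}{2\nu}$ from Theorem \ref{effectivediffusivity}, as the remark preceding the proposition already anticipates. Everything else is the routine scaling computation above, so I do not expect any genuine obstacle.
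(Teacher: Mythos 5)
Your proof is correct and follows essentially the same route as the paper's: the sup-norm bound from the kernel being a probability density, the trivial estimate for $\abs{x-y}\geq L_n$, and the Gaussian gradient bound $\abs{D\overline{R}_nf}\leq C(\alpha_nL_n^2)^{-1/2}\norm{f}_{L^\infty}$ for $\abs{x-y}<L_n$, with Theorem \ref{effectivediffusivity} supplying the uniform lower bound on $\alpha_n$. No issues.
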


\begin{proof}  Fix $n\geq 0$ and $f\in L^\infty(\mathbb{R}^d)$.  In view of (\ref{o_hop}), for each $x\in\mathbb{R}^d$, $$\overline{R}_nf(x)=\int_{\mathbb{R}^d}(4\pi \alpha_n L_n^2)^{-d/2}e^{-\abs{x-y}^2/4\alpha_n L_n^2}f(y)\;dy.$$  Therefore, \begin{equation}\label{o_regular_1}\norm{\overline{R}_nf(x)}_{L^\infty(\mathbb{R}^d)}\leq\norm{f}_{L^\infty(\mathbb{R}^d)}.\end{equation}  It remains to bound the H\"older semi-norm.

For each $x\in\mathbb{R}^d$, $$D\overline{R}_nf(x)=\pi^{-d/2}(4\alpha_n L_n^2)^{-1/2}\int_{\mathbb{R}^d}\frac{x-y}{(4\alpha_n L_n^2)^{(d+1)/2}}e^{-\abs{x-y}^2/4\alpha_n L_n^2}f(y)\;dy.$$  Therefore, in view of Theorem \ref{effectivediffusivity}, for each $x\in\mathbb{R}^d$, for $C>0$ independent of $n\geq 0$ and $f\in L^\infty(\mathbb{R}^d)$, $$\abs{D\overline{R}_nf(x)}=\left|\pi^{-d/2}(4\alpha_n L_n^2)^{-1/2}\int_{\mathbb{R}^d} ye^{-\abs{y}^2}f\left(4\alpha_nL_n^2)^{1/2}y+x\right)\;dy\right|\leq CL_n^{-1}\norm{f}_{L^\infty(\mathbb{R}^d)}.$$  So, whenever $x,y\in\mathbb{R}^d$ satisfy $0<\abs{x-y}<L_n$, \begin{equation}\label{o_regular_2} L_n^\beta\frac{\abs{\overline{R}_nf(x)-\overline{R}_nf(y)}}{\abs{x-y}^\beta}\leq CL_n^{\beta-1}\norm{f}_{L^\infty(\mathbb{R}^d)}\abs{x-y}^{1-\beta}\leq \norm{f}_{L^\infty(\mathbb{R}^d)}.\end{equation}  And, in view of (\ref{o_regular_1}), if $\abs{x-y}\geq L_n$, \begin{equation}\label{o_regular_3}  L_n^\beta\frac{\abs{\overline{R}_nf(x)-\overline{R}_nf(y)}}{\abs{x-y}^\beta}\leq 2\norm{f}_{L^\infty(\mathbb{R}^d)}.\end{equation}  The claim follows from (\ref{o_regular_1}), (\ref{o_regular_2}) and (\ref{o_regular_3}).  \end{proof}

The following observation is elementary and well-known.  The kernels $\overline{R}_n$ preserve H\"older continuous initial data.

\begin{prop}\label{o_contract}  For each $n\geq 0$ and $f\in\C^{0,\beta}(\mathbb{R}^d)$, $$\abs{\overline{R}_nf}_n\leq\abs{f}_n.$$\end{prop}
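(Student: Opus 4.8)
The plan is to exploit the convolution representation of $\overline{R}_n$ together with the fact that the Gaussian kernel is a probability density, so that $\overline{R}_n$ acts as an averaging operator and hence contracts both the supremum norm and the $\beta$-H\"older seminorm simultaneously. Concretely, set
\begin{equation*}
G_n(z)=(4\pi\alpha_n L_n^2)^{-d/2}e^{-\abs{z}^2/4\alpha_n L_n^2},
\end{equation*}
so that by \eqref{o_hop} we have $\overline{R}_nf(x)=\int_{\mathbb{R}^d}G_n(x-y)f(y)\,dy=\int_{\mathbb{R}^d}G_n(z)f(x-z)\,dz$, and $\int_{\mathbb{R}^d}G_n(z)\,dz=1$ with $G_n\geq 0$.

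The first step bounds the supremum norm: since $G_n$ integrates to one, $\abs{\overline{R}_nf(x)}\leq\int_{\mathbb{R}^d}G_n(z)\abs{f(x-z)}\,dz\leq\norm{f}_{L^\infty(\mathbb{R}^d)}$ for every $x\in\mathbb{R}^d$, hence $\norm{\overline{R}_nf}_{L^\infty(\mathbb{R}^d)}\leq\norm{f}_{L^\infty(\mathbb{R}^d)}$. The second step bounds the H\"older seminorm by using translation invariance of the kernel: for $x\neq y$,
\begin{equation*}
\overline{R}_nf(x)-\overline{R}_nf(y)=\int_{\mathbb{R}^d}G_n(z)\bigl(f(x-z)-f(y-z)\bigr)\,dz,
\end{equation*}
so that, estimating the integrand by the H\"older seminorm of $f$ and using once more that $G_n$ is a probability density,
\begin{equation*}
\abs{\overline{R}_nf(x)-\overline{R}_nf(y)}\leq\Bigl(\sup_{u\neq v}\frac{\abs{f(u)-f(v)}}{\abs{u-v}^\beta}\Bigr)\abs{x-y}^\beta\int_{\mathbb{R}^d}G_n(z)\,dz=\Bigl(\sup_{u\neq v}\frac{\abs{f(u)-f(v)}}{\abs{u-v}^\beta}\Bigr)\abs{x-y}^\beta.
\end{equation*}
Multiplying through by $L_n^\beta$ shows the rescaled seminorm of $\overline{R}_nf$ is dominated by that of $f$.

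Adding the two bounds gives $\abs{\overline{R}_nf}_n\leq\abs{f}_n$, which is the claim. I do not anticipate a genuine obstacle here: the estimate is scale-free, so, in contrast with Proposition \ref{o_regular}, one does not need the lower bound on $\alpha_n$ from Theorem \ref{effectivediffusivity}; the only point requiring a (routine) verification is that the $\beta$-H\"older difference quotient passes through the convolution, which is exactly the translation-invariance computation displayed above.
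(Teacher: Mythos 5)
Your proposal is correct and is essentially the same argument as the paper's: both use the convolution representation of $\overline{R}_n$ with the Gaussian probability density, bound the supremum norm by $\norm{f}_{L^\infty(\mathbb{R}^d)}$, and pass the H\"older difference quotient through the convolution by translation invariance. Your remark that the lower bound on $\alpha_n$ is not needed here is also consistent with the paper, which does not invoke Theorem \ref{effectivediffusivity} in this proof.
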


\begin{proof}  Fix $n\geq 0$ and $f\in\C^{0,\beta}(\mathbb{R}^d)$.  For each $x\in\mathbb{R}^d$, $$\overline{R}_nf(x)=\int_{\mathbb{R}^d}(4\pi \alpha_n L_n^2)^{-d/2}e^{-\abs{y}^2/4\alpha_n L_n^2}f(y+x)\;dy.$$  Therefore, \begin{equation}\label{0_contract_1}\norm{\overline{R}_nf}_{L^\infty(\mathbb{R}^d)}\leq\norm{f}_{L^\infty(\mathbb{R}^d)}.\end{equation}  It remains to bound the H\"older semi-norm.

Fix elements $y\neq z$ of $\mathbb{R}^d$.  Then, $$\abs{\overline{R}_nf(y)-\overline{R}_nf(z)}=\left|\int_{\mathbb{R}^d}(4\pi \alpha_n L_n^2)^{-d/2}e^{-\abs{y}^2/4\alpha_n L_n^2}\left(f(y+x)-f(z+x)\right)\;dy,\right| $$ and, therefore, \begin{equation}\label{0_contract_2}\sup_{y\neq z}\frac{\abs{\overline{R}_nf(y)-\overline{R}_nf(z)}}{\abs{y-z}^\beta}\leq \sup_{y\neq z}\frac{\abs{f(y)-f(z)}}{\abs{y-z}^\beta}.\end{equation}  The claim follows from (\ref{0_contract_1}) and (\ref{0_contract_2}).  \end{proof}

Finally, the following proposition describes the localization properties of the kernels $\overline{R}_n$.  Here, notice again the role of Theorem \ref{effectivediffusivity} and recall the cutoff function introduced in (\ref{cutoff}).

\begin{prop}\label{o_localize}  Assume (\ref{steady}) and (\ref{constants}).  There exits $C=C(d)>0$ and $c>0$ independent of $n$ such that, for each $f\in L^\infty(\mathbb{R}^d)$, $$\abs{\overline{R}_n(1-\chi_{\tilde{D}_n})f(0)}\leq Ce^{-c\tilde{\kappa}_n^2}\norm{f}_{L^\infty(\mathbb{R}^d)}.$$  \end{prop}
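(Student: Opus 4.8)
The plan is to argue directly from the Gaussian representation of $\overline{R}_n$. Recalling (as established in the proof of Proposition \ref{o_regular}) that, for each $x\in\mathbb{R}^d$,
$$\overline{R}_n g(x)=\int_{\mathbb{R}^d}(4\pi\alpha_n L_n^2)^{-d/2}e^{-\abs{x-y}^2/4\alpha_n L_n^2}g(y)\,dy,$$
I would apply this with $g=(1-\chi_{\tilde{D}_n})f$ and $x=0$. Since, by (\ref{cutoff}), $\chi_{\tilde{D}_n}(y)=\chi(y/\tilde{D}_n)=1$ whenever $\abs{y}\leq\tilde{D}_n$, while $0\leq\chi_{\tilde{D}_n}\leq 1$ everywhere, the factor $1-\chi_{\tilde{D}_n}$ vanishes on $B_{\tilde{D}_n}$ and is bounded by one. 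Hence
$$\abs{\overline{R}_n(1-\chi_{\tilde{D}_n})f(0)}\leq\norm{f}_{L^\infty(\mathbb{R}^d)}\int_{\abs{y}>\tilde{D}_n}(4\pi\alpha_n L_n^2)^{-d/2}e^{-\abs{y}^2/4\alpha_n L_n^2}\,dy.$$

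Next I would rescale: the substitution $y=(4\alpha_n L_n^2)^{1/2}z$ turns the right-hand integral into $\pi^{-d/2}\int_{\abs{z}>r_n}e^{-\abs{z}^2}\,dz$, where $r_n=\tilde{D}_n/(4\alpha_n L_n^2)^{1/2}=\tilde{\kappa}_n/(2\sqrt{\alpha_n})$ after using $\tilde{D}_n=L_n\tilde{\kappa}_n$ from (\ref{D}). This is exactly where Theorem \ref{effectivediffusivity} is used: the $n$-independent bound $\alpha_n\leq 2\nu$ gives $r_n\geq\tilde{\kappa}_n/(2\sqrt{2\nu})$, so the domain of integration is contained in $\{\abs{z}>\tilde{\kappa}_n/(2\sqrt{2\nu})\}$ with a threshold not depending on $n$. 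A standard Gaussian tail estimate — in polar coordinates, $\int_{\abs{z}>R}e^{-\abs{z}^2}\,dz\leq C(d)e^{-R^2/2}$ for every $R\geq 0$ — applied with $R=\tilde{\kappa}_n/(2\sqrt{2\nu})$ then yields
$$\abs{\overline{R}_n(1-\chi_{\tilde{D}_n})f(0)}\leq C(d)e^{-\tilde{\kappa}_n^2/(16\nu)}\norm{f}_{L^\infty(\mathbb{R}^d)},$$
which is the assertion with $c=1/(16\nu)$.

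There is no substantive obstacle here; the only point requiring care is that the decay constant $c$ must be independent of $n$, and this is precisely what the uniform lower bound on $\alpha_n$ in Theorem \ref{effectivediffusivity} provides. For the (at most finitely many) small indices $n$, if any, for which $\tilde{\kappa}_n<2\sqrt{2\nu}$, one does not even invoke the tail bound: the trivial estimate $\abs{\overline{R}_n(1-\chi_{\tilde{D}_n})f(0)}\leq\norm{f}_{L^\infty(\mathbb{R}^d)}$ combined with $e^{-c\tilde{\kappa}_n^2}>e^{-8c\nu}$ lets these be absorbed by enlarging $C$, so that the stated inequality holds for all $n\geq 0$.
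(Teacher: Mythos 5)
Your proposal is correct and follows essentially the same route as the paper: both bound $\overline{R}_n(1-\chi_{\tilde{D}_n})f(0)$ by $\norm{f}_{L^\infty(\mathbb{R}^d)}$ times the Gaussian tail over $\abs{y}>\tilde{D}_n$, rescale by $(4\alpha_n L_n^2)^{1/2}$, and use the uniform bounds on $\alpha_n$ from Theorem \ref{effectivediffusivity} to make $c$ independent of $n$. Your version is, if anything, slightly more careful in stating the tail estimate and in absorbing the finitely many small indices into the constant $C$.
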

 
\begin{proof}  Fix $n\geq 0$.  Then, for $C=C(d)>0$, \begin{multline*}\abs{\overline{R}_n(1-\chi_{\tilde{D}_n})f(0)}\leq \int_{\mathbb{R}^d\setminus B_{\tilde{D}_n}}(4\pi \alpha_n L_n^2)^{-d/2}e^{-\abs{x-y}^2/4\alpha_n L_n^2}f(y)\;dy \\ \leq C\norm{f}_{L^\infty(\mathbb{R}^d)}\int_{\tilde{D}_n/2\sqrt{\alpha_n}L_n}^\infty re^{-r^2}\;dr.\end{multline*}  Therefore, using Theorem \ref{effectivediffusivity}, there exists $c>0$ independent of $n$ such that, for $C=C(d)>0$, $$\abs{\overline{R}_n(1-\chi_{\tilde{D}_n})f(0)}\leq Ce^{-\tilde{\kappa}_n^2/4\alpha_n}\norm{f}_{L^\infty(\mathbb{R}^d)}\leq Ce^{-\tilde{c\kappa}_n^2}\norm{f}_{L^\infty(\mathbb{R}^d)} ,$$ which completes the argument.  \end{proof}

We are now prepared to proceed with the main argument.  In order to exploit the finite range dependence in what follows, see (\ref{finitedep}), we introduce localized versions of the kernels $R_n$.  Define, for each $n\geq 0$ and $\omega\in\Omega$, $$\tilde{R}_nf(x,\omega)=\tilde{u}(x,L_n^2,\omega),$$ for $\tilde{u}:\overline{B}_{6\tilde{D}_n}\times[0,\infty)\times\Omega\rightarrow\mathbb{R}^d$ satisfying \begin{equation}\label{o_localized} \left\{\begin{array}{ll} \tilde{u}_t-\frac{1}{2}\tr(A(y,\omega)D^2\tilde{u})+b(y,\omega)\cdot D\tilde{u}=0 & \textrm{on}\;\;B_{6\tilde{D}_n}(x)\times(0,\infty), \\ \tilde{u}=f(y,\omega) & \textrm{on}\;\;B_{6\tilde{D}_n}(x)\times\times\left\{0\right\}, \\ \tilde{u}=f(y,\omega) & \textrm{on}\;\;\partial B_{6\tilde{D}_n}(x)\times(0,\infty).\end{array}\right.\end{equation}  The following proposition describes the basic properties of the solutions to (\ref{o_localized}).

\begin{prop}\label{o_localsol}  Assume (\ref{steady}).  For each $x\in\mathbb{R}^d$ and $f\in L^\infty(\mathbb{R}^d\times\Omega)$ there exists a unique solution $\tilde{u}(y,t,\omega):\overline{B}_{6\tilde{D}_n}\times[0,\infty)\times\Omega\rightarrow\mathbb{R}$ of (\ref{o_localized}) satisfying, for each $T>0$, $x\in\mathbb{R}^d$ and $\omega\in\Omega$, $\tilde{u}(y,t,\omega)\in\BUC(\overline{B}_{6\tilde{D}_n}(x)\times[0,T])$ with, for each $\omega\in\Omega$, $$\norm{\tilde{u}(y,t,\omega)}_{L^\infty(\overline{B}_{6\tilde{D}_n}(x)\times[0,\infty))}\leq\norm{f(y,\omega)}_{L^\infty(\mathbb{R}^d\times\Omega)}.$$  Furthermore, if $f(x,\omega)$ satisfies (\ref{o_stationary}), then for each $n\geq 0$ and $k\geq 0$, the map $\left(\tilde{R}_n\right)^kf(x,\omega):\mathbb{R}^d\times\Omega\rightarrow\mathbb{R}^d$ is stationary.  Precisely, for each $x,y\in\mathbb{R}^d$ and $\omega\in\Omega$, $$\left(\tilde{R}_n\right)^kf(x,\tau_y\omega)=\left(\tilde{R}_n\right)^kf(x+y,\omega).$$\end{prop}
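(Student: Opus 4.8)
The plan is to separate the two assertions: existence/uniqueness of the localized solution $\tilde u$, and the stationarity of $(\tilde R_n)^k f$. The first assertion is essentially the classical theory of uniformly parabolic equations on a bounded smooth domain with continuous boundary and initial data, so I would dispatch it by citing \cite{Fr}: under (\ref{bounded}), (\ref{Lipschitz}) and (\ref{elliptic}), the initial-boundary value problem (\ref{o_localized}) on the cylinder $\overline B_{6\tilde D_n}(x)\times[0,T]$ has a unique solution in $\BUC(\overline B_{6\tilde D_n}(x)\times[0,T])$, and the parabolic maximum principle gives the $L^\infty$ bound $\norm{\tilde u}_{L^\infty}\leq \norm{f}_{L^\infty(\mathbb{R}^d\times\Omega)}$ since the boundary and initial data are both $f(\cdot,\omega)$, itself bounded by $\norm{f}_{L^\infty(\mathbb{R}^d\times\Omega)}$ uniformly in $\omega$. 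Because $T>0$ was arbitrary, the solution extends to $[0,\infty)$ with the stated bound.

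For the stationarity, I would first establish the base case $k=1$, exactly as in Proposition \ref{o_sol}: fix $x,y\in\mathbb{R}^d$ and $\omega\in\Omega$, and observe that both $\tilde u(z,t,\tau_y\omega)$ — the solution of (\ref{o_localized}) centered at $x$ for the environment $\tau_y\omega$ — and $z\mapsto \tilde u(z+y,t,\omega)$ — a translate of the solution of (\ref{o_localized}) centered at $x+y$ for the environment $\omega$ — solve the \emph{same} initial-boundary value problem on $B_{6\tilde D_n}(x)\times(0,\infty)$. Indeed, by (\ref{stationary}) the coefficients transform as $A(z,\tau_y\omega)=A(z+y,\omega)$ and $b(z,\tau_y\omega)=b(z+y,\omega)$, so the operator is preserved under the shift $z\mapsto z+y$; by (\ref{o_stationary}) the initial and boundary data transform as $f(z,\tau_y\omega)=f(z+y,\omega)$, which matches; and the shifted ball $B_{6\tilde D_n}(x)+y=B_{6\tilde D_n}(x+y)$ is precisely the domain for the problem centered at $x+y$. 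Uniqueness from the first part then forces $\tilde R_n f(x,\tau_y\omega)=\tilde R_n f(x+y,\omega)$, which is the claim for $k=1$.

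The general case follows by induction on $k$. Suppose $(\tilde R_n)^{k}f$ is stationary. Writing $g(\cdot,\omega)=(\tilde R_n)^{k}f(\cdot,\omega)$, which lies in $L^\infty(\mathbb{R}^d\times\Omega)$ by the $L^\infty$ bound already proved and is stationary by the inductive hypothesis, we have $(\tilde R_n)^{k+1}f=\tilde R_n g$, and applying the $k=1$ case to $g$ gives $(\tilde R_n)^{k+1}f(x,\tau_y\omega)=\tilde R_n g(x,\tau_y\omega)=\tilde R_n g(x+y,\omega)=(\tilde R_n)^{k+1}f(x+y,\omega)$. This closes the induction. I expect the main (and only real) subtlety to be bookkeeping the relabeling $z\mapsto z+y$ consistently across the interior equation, the initial slice, and the lateral boundary of the cylinder simultaneously — once one checks that the ball centered at $x$ in environment $\tau_y\omega$ corresponds to the ball centered at $x+y$ in environment $\omega$, everything else is a direct application of (\ref{stationary}), (\ref{o_stationary}) and uniqueness, exactly paralleling Proposition \ref{o_sol}.
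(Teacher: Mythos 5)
Your proposal is correct and follows essentially the same route as the paper: existence, uniqueness and the $L^\infty$ bound are quoted from the classical parabolic theory in \cite{Fr} together with the maximum principle, and stationarity is derived from uniqueness by observing that translating the solution on $\overline{B}_{6\tilde{D}_n}(x+y)$ in environment $\omega$ produces the solution on $\overline{B}_{6\tilde{D}_n}(x)$ in environment $\tau_y\omega$. Your explicit induction on $k$ merely spells out the iteration that the paper leaves implicit.
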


\begin{proof}  Fix $n\geq 0$ and $k\geq 0$.  The existence and uniqueness of a solution to (\ref{o_localized}) satisfying the above estimates, for each $\omega\in\Omega$, is an elementary consequence of (\ref{bounded}), (\ref{Lipschitz}) and $f\in L^\infty(\mathbb{R}^d\times\Omega)$.  See, for instance, \cite{Fr}.  The stationarity is a consequence of (\ref{stationary}) and the uniqueness since, for each $\omega\in\Omega$ and $x,y\in\mathbb{R}^d$, if $\tilde{u}(\cdot,\cdot,\omega)$ satisfies (\ref{o_localized}) corresponding to $\omega$ on $\overline{B}_{6\tilde{D}_n}(x+y)\times[0,\infty)$ then $\tilde{u}(\cdot+y,\cdot,\omega)$ satisfies (\ref{o_localized}) corresponding to $\tau_y\omega$ on $\overline{B}_{6\tilde{D}_n}(x)\times[0,\infty)$.  \end{proof}

We now obtain Controls \ref{localization} and \ref{o_Holder} on a large portion of space, with high probability.  Define, for each $n\geq 0$, $$\tilde{A}_n=\left\{\;\omega\in\Omega\;|\;\omega\in B_n(x)\;\textrm{for all}\;x\in L_n\mathbb{Z}^d\cap[-2L_{n+2}^2, 2L_{n+2}^2]^d.\;\right\},$$ and, for each $n\geq 0$, \begin{equation}\label{o_subset} A_n=\tilde{A_n}\cap \tilde{A}_{n+1}\cap \tilde{A}_{n+2}.\end{equation}  The following proposition provides, for each $n\geq 0$, a lower bound for the probability of $A_n$.

\begin{prop}\label{o_probability}  Assume (\ref{steady}) and (\ref{constants}).  For each $n\geq 0$, for $C>0$ independent of $n$, $$\mathbb{P}(\Omega\setminus A_n)\leq CL_n^{(2(1+a)^2-1)d-M_0}.$$\end{prop}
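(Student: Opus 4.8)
The plan is to estimate $\mathbb{P}(\Omega \setminus A_n)$ by a union bound over the finitely many lattice points appearing in the definition of $\tilde{A}_n$, $\tilde{A}_{n+1}$ and $\tilde{A}_{n+2}$, using the single-point estimate $\mathbb{P}(\Omega \setminus B_m(0)) \leq L_m^{-M_0}$ from Theorem \ref{induction} together with the translation invariance (\ref{mainevent1}). First I would write
$$\Omega \setminus A_n \subset (\Omega \setminus \tilde{A}_n) \cup (\Omega \setminus \tilde{A}_{n+1}) \cup (\Omega \setminus \tilde{A}_{n+2}),$$
and then, for each $m \in \{n, n+1, n+2\}$, observe that
$$\Omega \setminus \tilde{A}_m = \bigcup_{x \in L_m \mathbb{Z}^d \cap [-2L_{n+2}^2, 2L_{n+2}^2]^d} \left(\Omega \setminus B_m(x)\right).$$

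Next I would count the lattice points. The cube $[-2L_{n+2}^2, 2L_{n+2}^2]^d$ has side length $4L_{n+2}^2$, so the number of points of $L_m \mathbb{Z}^d$ it contains is at most $C(4L_{n+2}^2/L_m)^d \leq C L_{n+2}^{2d} L_m^{-d} \leq C L_{n+2}^{2d}$, and since $m \geq n$ we have in particular the crude bound $C L_{n+2}^{2d}$ for all three values of $m$. Combining this with Theorem \ref{induction} and (\ref{mainevent1}),
$$\mathbb{P}(\Omega \setminus \tilde{A}_m) \leq C L_{n+2}^{2d} L_m^{-M_0} \leq C L_{n+2}^{2d} L_n^{-M_0},$$
using $L_m \geq L_n$ for $m \geq n$. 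Summing over the three values $m \in \{n, n+1, n+2\}$ only changes the constant, so $\mathbb{P}(\Omega \setminus A_n) \leq C L_{n+2}^{2d} L_n^{-M_0}$.

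Finally I would convert $L_{n+2}$ into a power of $L_n$ using the scale relation following (\ref{L}): since $\frac{1}{2}L_m^{1+a} \leq L_{m+1} \leq 2 L_m^{1+a}$, iterating twice gives $L_{n+2} \leq C L_n^{(1+a)^2}$, hence $L_{n+2}^{2d} \leq C L_n^{2(1+a)^2 d}$. Therefore
$$\mathbb{P}(\Omega \setminus A_n) \leq C L_n^{2(1+a)^2 d - M_0}.$$
This is slightly better than the claimed exponent $(2(1+a)^2 - 1)d - M_0$, so to match the statement exactly I would be a little more careful in the counting step: the lattice points lie in a cube of side $4L_{n+2}^2$ but with spacing $L_m \geq L_n$, and for $m = n$ (the worst case) the count is at most $C(L_{n+2}^2/L_n)^d$; since $L_{n+2} \geq L_n^{(1+a)^2}/C$ one checks $(L_{n+2}^2/L_n)^d = L_{n+2}^{2d} L_n^{-d} \leq C L_n^{(2(1+a)^2 - 1)d}$, which yields exactly the stated bound. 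I expect the only mild obstacle to be bookkeeping: making sure the crude bound $L_m \geq L_n$ is used in the exponent $-M_0$ (so that the negative power is as strong as claimed) while simultaneously using $L_m \leq C L_{n+2}^{?}$ in the counting; but since all three scales $L_n, L_{n+1}, L_{n+2}$ are polynomially comparable via (\ref{L}), this is routine and the union over the three indices costs only a constant factor.
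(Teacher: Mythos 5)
Your proposal follows essentially the same route as the paper: a union bound over the lattice points defining $\tilde{A}_n$, $\tilde{A}_{n+1}$, $\tilde{A}_{n+2}$, the single-point estimate of Theorem \ref{induction} combined with the translation invariance (\ref{mainevent1}), and the count $(L_{n+2}^2/L_n)^d\leq CL_n^{(2(1+a)^2-1)d}$. Two small slips worth noting: the cube in the definition of $\tilde{A}_m$ grows with $m$ (it is $[-2L_{m+2}^2,2L_{m+2}^2]^d$, not $[-2L_{n+2}^2,2L_{n+2}^2]^d$ for all three indices), which is harmless since for $m=n+1,n+2$ the resulting bound $CL_m^{(2(1+a)^2-1)d-M_0}$ is still at most $CL_n^{(2(1+a)^2-1)d-M_0}$ because the exponent is negative; and your crude exponent $2(1+a)^2d-M_0$ is slightly \emph{weaker}, not better, than the claimed one, though your subsequent refined count correctly recovers the stated exponent.
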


\begin{proof}  In view of (\ref{mainevent1}), for each $n\geq 0$, for $C>0$ independent of $n$, $$\mathbb{P}(\Omega\setminus \tilde{A}_n)\leq \sum_{x\in L_n\mathbb{Z}^d\cap[-2L_{n+2}^2, 2L_{n+2}^2]^d}\mathbb{P}\left(\Omega\setminus B_n(x)\right)\leq C\left(L_{n+2}^2/L_n\right)^d\mathbb{P}\left(\Omega\setminus B_n(0)\right).$$  Therefore, using Theorem \ref{induction}, for each $n\geq 0$, for $C>0$ independent of $n$, $$\mathbb{P}(\Omega\setminus \tilde{A}_n)\leq CL_n^{(2(1+a)^2-1)d-M_0}.$$  This implies that, for each $n\geq 0$, \begin{multline*}\mathbb{P}\left(\Omega\setminus A_n\right)\leq \mathbb{P}\left(\Omega\setminus \tilde{A}_n\right)+\mathbb{P}\left(\Omega\setminus\tilde{A}_{n+1}\right)+\mathbb{P}\left(\Omega\setminus \tilde{A}_{n+2}\right) \\ \leq C\left(L_n^{(2(1+a)^2-1)d-M_0}+L_{n+1}^{(2(1+a)^2-1)d-M_0}+L_{n+2}^{(2(1+a)^2-1)d-M_0}\right)\leq CL_n^{(2(1+a)^2-1)d-M_0},\end{multline*} which completes the argument.  \end{proof}

We remark that, in view of (\ref{Holderexponent}) and (\ref{delta}), the exponent $(2(1+a)^2-1)d-M_0<0$.  The following proposition provides the first step toward comparing $R_{n+1}f(x,\omega)$ to $R_nf(x,\omega)$.  We obtain this comparison on the subset $A_n$ defined in (\ref{o_subset}).

Notice that the estimates contained in the following proposition depend on the unscaled, $\beta$-H\"older norm of the initial data.  To identify the invariant measure, since this requires us to consider initial data $f\in L^\infty(\mathbb{R}^d\times\Omega)$, we will use Proposition \ref{o_weak} and apply the following result to $R_1f(x,\omega)$.

\begin{prop}\label{o_main}  Assume (\ref{steady}) and (\ref{constants}).  For each $n\geq 0$, $\omega\in A_n$, $1\leq k<\ell_{n+1}^2$ and $f\in C^{0,\beta}(\mathbb{R}^d)$, for $C>0$ independent of $n$, $$\sup_{x\in B_{4\sqrt{k}\tilde{D}_{n+1}}}\abs{\left(R_{n+1}\right)^kf(x,\omega)-\left(\overline{R}_n\right)^{k\ell_n^2-6}\left(\tilde{R}_n\right)^6f(x,\omega)}\leq CL_n^{\beta-7(\delta-5a)}\norm{f}_{C^{0,\beta}(\mathbb{R}^d)}.$$\end{prop}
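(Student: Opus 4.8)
The plan is to compare $(R_{n+1})^k f$ with $(\overline{R}_n)^{k\ell_n^2 - 6}(\tilde{R}_n)^6 f$ by interpolating through a telescoping chain that replaces one application of $R_{n+1} = (R_n)^{\ell_n^2}$ at a time with $(\overline{R}_n)^{\ell_n^2}$, while keeping the last six factors "localized" as $\tilde{R}_n$. First I would record the scaling identity $R_{n+1} = (R_n)^{\ell_n^2}$, valid because $L_{n+1}^2 = \ell_n^2 L_n^2$ by (\ref{L}), so that $(R_{n+1})^k f = (R_n)^{k\ell_n^2} f$. The target has the same total number of factors, $k\ell_n^2$, split as $(\overline{R}_n)^{k\ell_n^2-6}(\tilde{R}_n)^6$. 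I would then write the difference as a telescoping sum over the $k\ell_n^2$ slots: in slot $j$ we replace $R_n$ by $\overline{R}_n$ (for $j$ among the first $k\ell_n^2 - 6$) or by $\tilde{R}_n$ (for the last six slots). Each term of the telescoping sum is of the form (a product of contracting operators) applied to $(R_n - \overline{R}_n)g$ or $(R_n - \tilde{R}_n)g$, where $g$ is itself a bounded $\beta$-Hölder function built from the initial data by applying at most $k\ell_n^2$ of the operators $R_n$, $\overline{R}_n$, $\tilde{R}_n$.

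The core estimates feeding the telescoping are Control \ref{o_Holder} and Proposition \ref{o_regular}/\ref{o_contract}. For a single replacement of $R_n$ by $\overline{R}_n$: since Control \ref{o_Holder} only gives the bound $\abs{\chi_{n,x} S_n g}_n \le L_n^{-\delta}\abs{g}_n$ localized near a point $x$, and we need this on the ball $B_{4\sqrt{k}\tilde{D}_{n+1}}$, I would cover that ball by $O\big((\sqrt{k}\tilde{D}_{n+1}/L_n)^d\big)$ balls of radius $30\sqrt{d}L_n$ centered at points of $L_n\mathbb{Z}^d$; because $\omega \in A_n \subset \tilde{A}_n$ and $4\sqrt{k}\tilde{D}_{n+1} \le 4\ell_{n+1}\tilde{D}_{n+1} \le 2L_{n+2}^2$ for $k < \ell_{n+1}^2$, each such center lies in the range where $B_n(x)$ holds, so Control \ref{o_Holder} is available at every center. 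Using Proposition \ref{prelim_product} and Proposition \ref{prelim_extension} (patching the localized Hölder bounds into a global one) this upgrades to a bound of the form $\abs{S_n g}_n \lesssim L_n^{-\delta}(\text{number of patches})^{?}\abs{g}_n$ on the required ball — and here the number of patches contributes only polynomially in $L_n$, absorbed into the loss $5a$ in the exponent. I would similarly handle $R_n - \tilde{R}_n$: on $B_{6\tilde{D}_n}(x)$ the two solve the same equation with the same initial data and agree on the parabolic boundary, so their difference is controlled by how much mass of the process started near the center can reach $\partial B_{6\tilde{D}_n}(x)$ before time $L_n^2$; since $6\tilde{D}_n \ge D_n$, Control \ref{localization} (valid because $\omega\in\tilde{A}_n$) gives this difference an exponentially small bound $\exp(-c\tilde\kappa_n)$ times $\norm{g}_{L^\infty}$, which dominates the polynomial losses. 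Then I would control the Hölder norm of the intermediate functions $g$ by iterating Proposition \ref{o_weak}, Proposition \ref{o_regular}, Proposition \ref{o_contract}, and (for $\tilde{R}_n$, via the same maximum-principle bound as in Proposition \ref{o_sol}) the $L^\infty$-contractivity, so that throughout $\abs{g}_n \lesssim L_n^{C a}\norm{f}_{C^{0,\beta}}$ with only a polynomial-in-$L_n$ blowup.

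Summing the telescoping sum then costs a factor $k\ell_n^2 \le \ell_{n+1}^2 \ell_n^2 \lesssim L_n^{Ca}$ (another polynomial loss), times the worst single-step bound. The first $k\ell_n^2 - 6$ slots each contribute $\lesssim L_n^{-\delta + Ca}\abs{f}_n \lesssim L_n^{\beta - \delta + Ca}\norm{f}_{C^{0,\beta}}$ after converting the scaled norm $\abs{\cdot}_n$ to the unscaled $C^{0,\beta}$-norm (which costs $L_n^\beta$), and the six $\tilde{R}_n$ slots contribute the exponentially small term. Bookkeeping the powers of $a$ — using that, by (\ref{Holderexponent}) and (\ref{delta}), $\delta$ and $M_0$ are much larger than $a$ — collapses everything into $CL_n^{\beta - 7(\delta - 5a)}\norm{f}_{C^{0,\beta}}$; the exponent $7$ presumably tracks that six of the replaced factors are the $\tilde{R}_n$ ones plus the conversion, and the constant $5$ multiplying $a$ is the cumulative polynomial loss from patching, counting, and the $\ell_n$-factors, each of which is $O(L_n^a)$.

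The main obstacle, I expect, is the patching step: Control \ref{o_Holder} is genuinely local (a contraction only after multiplying by $\chi_{n,x}$), so one cannot simply iterate it — the cutoff destroys the semigroup structure. The delicate point is to show that after one step the difference $S_n g$, though only controlled near each lattice point, is supported (up to exponentially small error, via Control \ref{localization}) in the union of the good balls, so Proposition \ref{prelim_extension} applies to recover a global bound with only a polynomial loss; and then to verify that this polynomial loss, compounded over $k\ell_n^2$ steps and over the $O((\sqrt{k}\tilde{D}_{n+1}/L_n)^d)$ patches per step, still lands inside the budget $5a \cdot 7 = 35a$ worth of exponent — which is exactly why $a$ was chosen so small relative to $\beta$ and $\delta$ in (\ref{Holderexponent}) and (\ref{delta}).
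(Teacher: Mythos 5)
Your overall scaffolding (the identity $(R_{n+1})^k=(R_n)^{k\ell_n^2}$, the localization of Control \ref{o_Holder} via cutoffs and Propositions \ref{prelim_product} and \ref{prelim_extension}, the exponentially small cost of passing from $R_n$ to $\tilde R_n$ via Control \ref{localization}) matches the paper, but the central step --- a one-difference-per-term telescoping sum --- cannot produce the stated exponent, and the discrepancy is not absorbable into the $a$'s. Each term of your telescoping contains exactly one factor of $S_n=R_n-\overline{R}_n$, hence gains exactly one factor $L_n^{-\delta}$; with $\abs{(R_n)^jf}_n\leq L_n^{\beta}\norm{(R_n)^jf}_{C^{0,\beta}}\leq CL_n^{\beta}\norm{f}_{C^{0,\beta}}$ from Proposition \ref{o_weak}, and $k\ell_n^2\leq \ell_{n+1}^2\ell_n^2\lesssim L_n^{5a}$ terms, the best you can conclude is a bound of order $L_n^{\beta-\delta+5a}\norm{f}_{C^{0,\beta}}$. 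Since $\delta=\tfrac{5}{32}\beta$ by (\ref{delta}), the exponent $\beta-\delta+5a=\tfrac{27}{32}\beta+5a$ is \emph{positive}: your bound diverges as $n\to\infty$, whereas the claimed bound $L_n^{\beta-7(\delta-5a)}$ has negative exponent (because $7\delta=\tfrac{35}{32}\beta>\beta$) and its summability is exactly what Propositions \ref{o_cauchy} and \ref{o_identify} need. No amount of bookkeeping of polynomial-in-$L_n^{a}$ losses closes a gap of size $L_n^{6\delta}$; your reading of the exponent $7$ (``six $\tilde R_n$ slots plus the conversion'') is not where it comes from.

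The missing idea is that the terms carrying only $1$ through $6$ factors of $S_n$ are individually too large to discard (the $m$-factor terms are of size $L_n^{\beta-m(\delta-5a)}$), so they must be \emph{retained and recombined exactly} into the comparison quantity rather than estimated. The paper expands $(\tilde\chi_{n,x}\overline{R}_n+\tilde\chi_{n,x}S_n)^{k\ell_n^2}$ as a full multinomial sum indexed by the number $m$ of $S_n$-factors; for $m\geq 7$ each term gains $L_n^{-7\delta}$ against a combinatorial factor $\binom{k\ell_n^2}{m}3^m\leq \tfrac{3^m}{m!}L_n^{5am}$, which is the true source of the exponent $\beta-7(\delta-5a)$. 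For $0\leq m\leq 6$, one first shows (at cost $O(L_n^{5a-\delta})$) that only the configurations with all $S_n$-factors adjacent at the right survive, and then uses $S_n=R_n-\overline{R}_n$ to verify the algebraic identity $(\tilde\chi\overline{R}_n)^{k\ell_n^2}+\sum_{m=1}^{6}(\tilde\chi\overline{R}_n)^{k\ell_n^2-m}\tilde\chi S_n(\tilde\chi R_n)^{m-1}=(\tilde\chi\overline{R}_n)^{k\ell_n^2-6}(\tilde\chi R_n)^{6}$, which is precisely the target. A first-order telescoping is the $m\leq 1$ truncation of this expansion and stops six orders short.
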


\begin{proof}  Fix $n\geq 0$, $\omega\in A_n$, $1\leq k<\ell_{n+1}^2$ and $f\in C^{0,\beta}(\mathbb{R}^d)$.  In what follows, we suppress the dependence on $\omega\in\Omega$.  Notice that (\ref{L}), (\ref{kappa}) and (\ref{D}) imply that, since $1\leq k<\ell_{n+1}^2$, we have \begin{equation}\label{o_main_00} 4\sqrt{k}\tilde{D}_{n+1}<\tilde{D}_{n+2}.\end{equation}  Also, notice that (\ref{D_1}) implies that, in the definition of $A_n$, we have \begin{equation}\label{o_main_0} 3\tilde{D}_{n+2}< L_{n+2}^2.\end{equation}  And, for what follows, we recall that $$\left(R_{n+1}\right)^kf(x)=\left(R_n\right)^{k\ell_n^2}f(x).$$

Fix $x\in B_{4\sqrt{k}\tilde{D}_{n+1}}$ and define the cutoff function $\tilde{\chi}_{n,x}:\mathbb{R}^d\rightarrow\mathbb{R}^d$, recalling (\ref{cutoff}), \begin{equation}\label{o_main_cut}\tilde{\chi}_{n,x}(y)=\chi_{\tilde{D}_{n+2}}(y-x)\;\;\textrm{on}\;\;\mathbb{R}^d.\end{equation}  Since \begin{equation}\label{o_main_1} \norm{R_nf}_{L^\infty(\mathbb{R}^d)}\leq\norm{f}_{L^\infty(\mathbb{R}^d)},\end{equation} and since $x\in B_{4\sqrt{k}\tilde{D}_{n+1}}$ and $\omega\in A_n$, Control \ref{localization}, Proposition \ref{local11}, (\ref{o_main_00}) and (\ref{o_main_0}) imply that $$\abs{\left(R_n\right)^{k\ell_n^2}f(x)-\left(R_n\right)^{k\ell_n^2-1}\tilde{\chi}_{n,x}R_nf(x)}=\abs{\left(R_n\right)^{k\ell_n^2-1}(1-\tilde{\chi}_{n,x})R_nf(x)}\leq e^{-\kappa_{n+2}}\norm{f}_{L^\infty(\mathbb{R}^d)}.$$  Proceeding inductively, we conclude that \begin{equation}\label{o_main_2} \abs{\left(R_n\right)^{k\ell_n^2}f(x)-\left(\tilde{\chi}_{n,x}R_n\right)^{k\ell_n^2}f(x)}\leq k\ell_n^2e^{-\kappa_{n+2}}\norm{f}_{L^\infty(\mathbb{R}^d)}\leq \ell_{n+1}^2\ell_n^2e^{-\kappa_{n+2}}\norm{f}_{L^\infty(\mathbb{R}^d)}.\end{equation}

We now write $$\left(\tilde{\chi}_{n,x}R_n\right)^{k\ell_n^2}f(x)=\left(\tilde{\chi}_{n,x}S_n+\tilde{\chi}_{n,x}\overline{R}_n\right)^{k\ell_n^2}f(x),$$ and, for nonnegative integers $k_i\geq 0$, \begin{multline*}\left(\tilde{\chi}_{n,x}S_n+\tilde{\chi}_{n,x}\overline{R}_n\right)^{k\ell_n^2}f(x)= \\ \sum_{m=0}^{k\ell_n^2}\sum_{k_0+\ldots+k_m+m=k\ell_n^2}\left(\tilde{\chi}_{n,x}\overline{R}_n\right)^{k_0}\tilde{\chi}_{n,x}S_n\left(\tilde{\chi}_{n,x}\overline{R}_n\right)^{k_1} \ldots \tilde{\chi}_{n,x}S_n\left(\tilde{\chi}_{n,x}\overline{R}_n\right)^{k_m}f(x).\end{multline*}

Since, for each $n\geq 0$, $$\abs{f}_n\leq L_n^\beta\norm{f}_{C^{0,\beta}(\mathbb{R}^d)},$$ and since $x\in B_{4\sqrt{k}\tilde{D}_{n+1}}$ and $\omega\in A_n$, Proposition \ref{prelim_product}, Proposition \ref{prelim_extension}, Control \ref{o_Holder}, Proposition \ref{o_regular}, Proposition \ref{o_contract}, (\ref{o_main_00}) and (\ref{o_main_0}) imply that \begin{multline*}\left|\sum_{m=7}^{k\ell_n^2}\sum_{k_0+\ldots+k_m+m=k\ell_n^2}\left(\tilde{\chi}_{n,x}\overline{R}_n\right)^{k_0}\tilde{\chi}_{n,x}S_n\left(\tilde{\chi}_{n,x}\overline{R}_n\right)^{k_1} \ldots \tilde{\chi}_{n,x}S_n\left(\tilde{\chi}_{n,x}\overline{R}_n\right)^{k_m}f(x)\right|\leq \\ \sum_{m=7}^{k\ell_n^2} {k\ell_n^2 \choose m} 3^mL_n^{\beta-m\delta}\norm{f}_{C^{0,\beta}(\mathbb{R}^d)}.  \end{multline*}  Therefore, for $C>0$ independent of $n$, using (\ref{Holderexponent}) to write $4a+2a^2<5a$, since $1\leq k<\ell_{n+1}^2$, the lefthand side of the above string of inequalities is bounded by \begin{equation}\label{o_main_3} \sum_{m=7}^{k\ell_n^2} \frac{3^m}{m!}L_n^{\beta-m(\delta-5a)}\norm{f}_{C^{0,\beta}(\mathbb{R}^d)}\leq CL_n^{\beta-7(\delta-5a)}\norm{f}_{C^{0,\beta}(\mathbb{R}^d)},\end{equation} where we remark that $\beta-7(\delta-5a)<0$ in view of (\ref{Holderexponent}) and (\ref{delta}).

It remains to consider \begin{equation}\label{o_main_4}\sum_{m=0}^{6}\sum_{k_0+\ldots+k_m+m=k\ell_n^2}\left(\tilde{\chi}_{n,x}\overline{R}_n\right)^{k_0}\tilde{\chi}_{n,x}S_n\left(\tilde{\chi}_{n,x}\overline{R}_n\right)^{k_1} \ldots \tilde{\chi}_{n,x}S_n\left(\tilde{\chi}_{n,x}\overline{R}_n\right)^{k_m}f(x).\end{equation}  We will prove that, up to an error which vanishes as $n$ approaches infinity, the above sum reduces to $$\left(\overline{R}_n\right)^{k\ell_n^2-6}\left(R_n\right)^6f(x).$$  To do so, we consider each summand in $m$ individually.

For the case $m=0$, the single summand is \begin{equation}\label{o_main_5} \left(\tilde{\chi}_{n,x}\overline{R}_n\right)^{k\ell_n^2}f(x).\end{equation}

For the case $m=1$, observe that, since $x\in B_{4\sqrt{k}\tilde{D}_{n+1}}$ and $\omega\in A_n$, Proposition \ref{prelim_product}, Proposition \ref{prelim_extension}, Control \ref{o_Holder}, Proposition \ref{o_regular}, Proposition \ref{o_contract}, (\ref{o_main_00}) and (\ref{o_main_0}) imply that, for $C>0$ independent of $n$, using (\ref{Holderexponent}) to write $4a+2a^2<5a$, \begin{multline}\label{o_main_7}\left|\sum_{k_0+k_1+1=k\ell_n^2}\left(\tilde{\chi}_{n,x}\overline{R}_n\right)^{k_0}\tilde{\chi}_{n,x}S_n\left(\tilde{\chi}_{n,x}\overline{R}_n\right)^{k_1}f(x)-\left(\tilde{\chi}_{n,x}\overline{R}_n\right)^{k\ell_n^2-1}\tilde{\chi}_{n,x}S_nf(x)\right| \\ \leq {k\ell_n^2 \choose 1}3L_n^{-\delta}\norm{f}_{L^\infty(\mathbb{R}^d)}\leq CL_n^{5a-\delta}\norm{f}_{L^\infty(\mathbb{R}^d)},\end{multline} where we observe that $5a-\delta<0$ in view of (\ref{Holderexponent}) and (\ref{delta}).  Furthermore, \begin{equation}\label{o_main_8} \left(\tilde{\chi}_{n,x}\overline{R}_n\right)^{k\ell_n^2-1}\tilde{\chi}_{n,x}S_nf(x)=\left(\tilde{\chi}_{n,x}\overline{R}_n\right)^{k\ell_n^2-1}\tilde{\chi}_{n,x}R_nf(x)-\left(\tilde{\chi}_{n,x}\overline{R}_n\right)^{k\ell_n^2}f(x).\end{equation}  Notice the cancellation between (\ref{o_main_5}) and (\ref{o_main_8}).

In what follows, we use that fact that, for every $f\in L^\infty(\mathbb{R}^d)$, $$\norm{S_nf}_{L^\infty(\mathbb{R}^d)}\leq 2\norm{f}_{L^\infty(\mathbb{R}^d)}.$$  Fix $2\leq m\leq 6$.  In this case, as in the case $m=0$ and $m=1$, Proposition \ref{prelim_product}, Proposition \ref{prelim_extension}, Control \ref{o_Holder}, Proposition \ref{o_regular} and Proposition \ref{o_contract} allow us to reduce the sum to the single term $k_i=0$ for all $1\leq i\leq m$.  Observe that, since $x\in B_{4\sqrt{k}\tilde{D}_{n+1}}$ and $\omega\in A_n$, $$\left|\sum_{k_m\neq 0}\left(\tilde{\chi}_{n,x}\overline{R}_n\right)^{k_0}\tilde{\chi}_{n,x}S_n\ldots\left(\tilde{\chi}_{n,x}\overline{R}_n\right)^{k_m}f(x)\right| \leq {k\ell_n^2 \choose m}3^mL_n^{-m\delta}\norm{f}_{L^\infty(\mathbb{R}^d)}.$$  And, generally, for $1\leq i\leq m$, since $x\in B_{4\sqrt{k}\tilde{D}_{n+1}}$ and $\omega\in A_n$, \begin{multline*}\left|\sum_{k_i\neq 0,\;k_j=0\;\textrm{if}\;j>i}\left(\tilde{\chi}_{n,x}\overline{R}_n\right)^{k_0}\tilde{\chi}_{n,x}S_n\ldots\left(\tilde{\chi}_{n,x}\overline{R}_n\right)^{k_i}\left(\tilde{\chi}_{n,x}S_n\right)^{m-i}f(x)\right| \\ \leq {k\ell_n^2-m+i \choose i}2^{m-i}3^iL_n^{-i\delta}\norm{f}_{L^\infty(\mathbb{R}^d)}.\end{multline*}  Therefore, for $C>0$ independent of $2\leq m\leq 6$ and $n$, using (\ref{Holderexponent}) to write $4a+2a^2<5a$, \begin{multline}\label{o_main_9}\left|\sum_{k_0+\ldots+k_m+m=k\ell_n^2}\left(\tilde{\chi}_{n,x}\overline{R}_n\right)^{k_0}\ldots\left(\tilde{\chi}_{n,x}\overline{R}_n\right)^{k_m}f(x)-\left(\tilde{\chi}_{n,x}\overline{R}_n\right)^{k\ell_n^2-m}\left(\tilde{\chi}_{n,x}S_n\right)^mf(x)\right| \\ \leq \sum_{i=1}^m{k\ell_n^2-m+i \choose i}2^{m-i}3^iL_n^{-i\delta}\norm{f}_{L^\infty(\mathbb{R}^d)}\leq CL_n^{5a-\delta}\norm{f}_{L^\infty(\mathbb{R}^d)},\end{multline} where we observe that $5a-\delta<0$ in view of (\ref{Holderexponent}) and (\ref{delta}).

Furthermore, again using Proposition \ref{prelim_product}, Proposition \ref{prelim_extension}, Control \ref{o_Holder}, Proposition \ref{o_regular}, Proposition \ref{o_contract} and (\ref{o_regular_1}), since $x\in B_{4\sqrt{k}\tilde{D}_{n+1}}$ and $\omega\in A_n$, for each $2\leq m\leq 6$, \begin{multline*}\left|\left(\tilde{\chi}_{n,x}\overline{R}_n\right)^{k\ell_n^2-m}\left(\tilde{\chi}_{n,x}S_n\right)^mf(x)-\left(\tilde{\chi}_{n,x}\overline{R}_n\right)^{k\ell_n^2-m}\left(\tilde{\chi}_{n,x}S_n\right)^{m-1}\tilde{\chi}_{n,x}R_nf(x)\right| \\ =\left|\left(\tilde{\chi}_{n,x}\overline{R}_n\right)^{k\ell_n^2-m}\left(\tilde{\chi}_{n,x}S_n\right)^{m-1}\tilde{\chi}_{n,x}\overline{R}_nf(x)\right|\leq 3^{m-1}L_n^{(m-1)\delta}\norm{f}_{L^\infty(\mathbb{R}^d)}.\end{multline*}  Proceeding inductively, for each $2\leq m\leq 6$, for $C>0$ independent of $m$ and $n$, \begin{multline*}\left|\left(\tilde{\chi}_{n,x}\overline{R}_n\right)^{k\ell_n^2-m}\left(\tilde{\chi}_{n,x}S_n\right)^mf(x)-\left(\tilde{\chi}_{n,x}\overline{R}_n\right)^{k\ell_n^2-m}\tilde{\chi}_{n,x}S_n\left(\tilde{\chi}_{n,x}R_n\right)^{m-1}f(x)\right| \\ \leq CL_n^{-\delta}\norm{f}_{L^\infty(\mathbb{R}^d)},\end{multline*}  where we observe that \begin{multline}\label{o_main_10}\left(\tilde{\chi}_{n,x}\overline{R}_n\right)^{k\ell_n^2}f(x)+\sum_{m=1}^6\left(\tilde{\chi}_{n,x}\overline{R}_n\right)^{k\ell_n^2-m}\tilde{\chi}_{n,x}S_n\left(\tilde{\chi}_{n,x}R_n\right)^{m-1}f(x) \\ =\left(\tilde{\chi}_{n,x}\overline{R}_n\right)^{k\ell_n^2-6}\left(\tilde{\chi}_{n,x}R_n\right)^{6}f(x).\end{multline}  And, since $x\in B_{4\sqrt{k}\tilde{D}_{n+1}}$, $\omega\in A_n$ and $1\leq k<\ell_{n+1}^2$, Control \ref{localization}, Proposition \ref{local11}, Proposition \ref{o_localize}, (\ref{o_main_00}) and (\ref{o_main_0}) imply that there exists $C>0$ and $c>0$ independent of $n$ and such that \begin{equation}\label{o_main_11}\left|\left(\tilde{\chi}_{n,x}\overline{R}_n\right)^{k\ell_n^2-6}\left(\tilde{\chi}_{n,x}R_n\right)^6f(x)-\left(\overline{R}_n\right)^{k\ell_n^2-6}\left(R_n\right)^6f(x)\right|\leq C\ell_{n+1}^2\ell_n^2e^{-c\kappa_{n+2}}\norm{f}_{L^\infty(\mathbb{R}^d)}.\end{equation}

Therefore, in view of (\ref{o_main_0}), (\ref{o_main_2}), (\ref{o_main_3}), (\ref{o_main_5}), (\ref{o_main_7}), (\ref{o_main_9}), (\ref{o_main_10}) and (\ref{o_main_11}), there exits $C>0$ and $c>0$ independent of $n$ such that \begin{multline}\label{o_main_12} \abs{\left(R_{n+1}\right)^kf(x)-\left(\overline{R}_n\right)^{k\ell_n^2-6}\left(R_n\right)^6f(x)}=\abs{\left(R_n\right)^{k\ell_n^2}f(x)-\left(\overline{R}_n\right)^{k\ell_n^2-6}\left(R_n\right)^6f(x)} \\ \leq C\ell_{n+1}^2\ell_n^2e^{-c\kappa_{n+2}}\norm{f}_{L^\infty(\mathbb{R}^d)}+CL_n^{\beta-7(\delta-5a)}\norm{f}_{C^{0,\beta}(\mathbb{R}^d)}+CL_n^{5a-\delta}\norm{f}_{L^\infty(\mathbb{R}^d)}.\end{multline}  In view of (\ref{Holderexponent}), (\ref{L}), (\ref{kappa}) and (\ref{delta}) there exits $C>0$ independent of $n$ such that, for all $n\geq 0$, $$\ell_{n+1}^2\ell_n^2e^{-c\kappa_{n+2}}\leq CL_n^{\beta-7(\delta-5a)}\;\;\textrm{and}\;\;L_n^{5a-\delta}\leq CL_n^{\beta-7(\delta-5a)}.$$  And, since $\norm{f}_{L^\infty(\mathbb{R}^d)}\leq \norm{f}_{C^{0,\beta}(\mathbb{R}^d)}$, we have, using (\ref{o_main_12}), for $C>0$ independent of $n$, \begin{equation}\label{o_main_14}\abs{\left(R_{n+1}\right)^kf(x,\omega)-\left(\overline{R}_n\right)^{k\ell_n^2-6}\left(R_n\right)^6f(x,\omega)}\leq CL_n^{\beta-7(\delta-5a)}\norm{f}_{C^{0,\beta}(\mathbb{R}^d)}.\end{equation}

Finally, since $\omega\in A_n$, using (\ref{o_subset}) and the fact that $6\tilde{D}_n< L_{n+2}^2$, for each $y\in[-L_{n+2}^2,L_{n+2}^2]^d$, we have, using Control \ref{localization} and (\ref{o_localized}), \begin{equation}\label{o_main_15} \abs{\left(R_n\right)^6f(y,\omega)-\left(\tilde{R}_n\right)^6f(y,\omega)}\leq 6e^{-\kappa_{n}}\norm{f}_{L^\infty(\mathbb{R}^d)}.\end{equation}   We write \begin{multline*}\left|\left(\overline{R}_n\right)^{k\ell_n^2-6}\left(R_n\right)^6f(x)-\left(\overline{R}_n\right)^{k\ell_n^2-6}\left(\tilde{R}_n\right)^6f(x)\right| \\ \leq \left|\left(\overline{R}_n\right)^{k\ell_n^2-6}\tilde{\chi}_{n,x}\left(\left(R_n\right)^6-\left(\tilde{R}_n\right)^6\right)f(x)\right|+\left|\left(\overline{R}_n\right)^{k\ell_n^2-6}(1-\tilde{\chi}_{n,x})\left(\left(R_n\right)^6-\left(\tilde{R}_n\right)^6\right)f(x)\right| \end{multline*} and observe that, using Proposition \ref{o_localize}, (\ref{o_main_00}), (\ref{o_main_0}), (\ref{o_main_1}) and (\ref{o_main_15}), since $x\in B_{4\sqrt{k}\tilde{D}_{n+1}}$ and $1\leq k<\ell_{n+1}^2$, for $C_1>0$ and $c_1>0$ independent of $n$, \begin{equation}\label{o_main_16} \left|\left(\overline{R}_n\right)^{k\ell_n^2-6}\left(R_n\right)^6f(x)-\left(\overline{R}_n\right)^{k\ell_n^2-6}\left(\tilde{R}_n\right)^6f(x)\right|\leq C_1e^{-c_1\kappa_n}\norm{f}_{L^\infty(\mathbb{R}^d)}.\end{equation}  Since $\norm{f}_{L^\infty(\mathbb{R}^d)}\leq\norm{f}_{C^{0,\beta}(\mathbb{R}^d)}$ and since (\ref{Holderexponent}), (\ref{L}), (\ref{kappa}) and (\ref{delta}) imply that there exists $C>0$ satisfying, for all $n\geq 0$, $$C_1e^{-c_1\kappa_n}\leq C L_n^{\beta-7(\delta-5a)},$$ we conclude that, in view of (\ref{o_main_14}) and (\ref{o_main_16}), $$\abs{\left(R_{n+1}\right)^kf(x,\omega)-\left(\overline{R}_n\right)^{k\ell_n^2-6}\left(\tilde{R}_n\right)^6f(x,\omega)}\leq CL_n^{\beta-7(\delta-5a)}\norm{f}_{C^{0,\beta}(\mathbb{R}^d)}.$$  Since $n\geq 0$, $1\leq k<\ell_{n+1}^2$, $\omega\in A_n$, $x\in B_{4\sqrt{k}\tilde{D}_{n+1}}$ and $f\in C^{0,\beta}(\mathbb{R}^d)$ were arbitrary, this completes the proof.  \end{proof}

We now prepared to provide the initial characterization of the invariant measure $\pi:\mathcal{F}\rightarrow\mathbb{R}$.  In view of Proposition \ref{o_main}, for each $n\geq 0$ and $f\in L^\infty(\mathbb{R}^d\times\Omega)$, define \begin{equation}\label{o_pin} \pi_n(f)=\mathbb{E}\left(\left(\tilde{R}_n\right)^6R_1f(0,\omega)\right).\end{equation}  The following two propositions prove that, for each $f\in L^\infty(\mathbb{R}^d\times\Omega)$ satisfying (\ref{o_stationary}), the sequence $\left\{\pi_n(f)\right\}_{n=0}^\infty$ is Cauchy.  Notice in particular that the rate of convergence depends only upon the $L^\infty$ norm of the initial condition.

\begin{prop}\label{o_cauchy}  Assume (\ref{steady}) and (\ref{constants}).  For each $n\geq 0$ and $f\in L^\infty(\mathbb{R}^d\times\Omega)$ satisfying (\ref{o_stationary}), for $C>0$ independent of $n$, $$\abs{\pi_{n+1}(f)-\pi_n(f)}\leq CL_n^{\beta-7(\delta-5a)}\norm{f}_{L^\infty(\mathbb{R}^d\times\Omega)}.$$ \end{prop}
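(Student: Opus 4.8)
The plan is to compare $\pi_{n+1}(f) = \E((\tilde R_{n+1})^6 R_1 f(0,\omega))$ and $\pi_n(f) = \E((\tilde R_n)^6 R_1 f(0,\omega))$ by inserting an intermediate quantity built from $(\overline{R}_n)^{k\ell_n^2 - 6}(\tilde R_n)^6 R_1 f$ with $k = 6$, so that $(R_{n+1})^6$ is effectively $(R_n)^{6\ell_n^2}$. First I would apply Proposition \ref{o_weak} to replace $f \in L^\infty(\mathbb{R}^d\times\Omega)$ by $g = R_1 f \in C^{0,\beta}(\mathbb{R}^d)$, with $\norm{g}_{C^{0,\beta}(\mathbb{R}^d)} \leq C\norm{f}_{L^\infty(\mathbb{R}^d\times\Omega)}$, so that Proposition \ref{o_main} becomes applicable. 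I would then work on the event $A_n$ of Proposition \ref{o_probability}: on $A_n$, Proposition \ref{o_main} with $k = 6 < \ell_{n+1}^2$ gives, at the origin,
$$\abs{(R_{n+1})^6 g(0,\omega) - (\overline{R}_n)^{6\ell_n^2 - 6}(\tilde R_n)^6 g(0,\omega)} \leq C L_n^{\beta - 7(\delta - 5a)}\norm{g}_{C^{0,\beta}(\mathbb{R}^d)}.$$
Since $(\tilde R_{n+1})^6 g(0,\omega)$ and $(R_{n+1})^6 g(0,\omega)$ differ only by a localization error — bounded by $6 e^{-\kappa_{n+1}}\norm{g}_{L^\infty}$ via Control \ref{localization} and the definition \eqref{o_localized}, exactly as in \eqref{o_main_15} — the left side controls $\abs{(\tilde R_{n+1})^6 g(0) - (\overline{R}_n)^{6\ell_n^2-6}(\tilde R_n)^6 g(0)}$ up to the same order.

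Next I would handle the $n$-level term. The idea is that $(\overline{R}_n)^{6\ell_n^2 - 6}$ applied to $(\tilde R_n)^6 g$, which (on $A_n$, near the origin) agrees with $(R_n)^6 g = (R_1$ iterated$)$... more precisely I would show $\abs{(\overline{R}_n)^{6\ell_n^2-6}(\tilde R_n)^6 g(0) - (\tilde R_n)^6 g(0)}$ is small. This is where the quantitative content lives: $(\tilde R_n)^6 g$ is $\beta$-Hölder at scale $L_n$ with $\abs{(\tilde R_n)^6 g}_n \leq C L_n^\beta \norm{g}_{C^{0,\beta}}$ by iterating the interior estimates (Proposition \ref{o_weak}-type bounds for the localized problem) together with Proposition \ref{o_regular}; then applying the heat semigroup $(\overline{R}_n)^{6\ell_n^2 - 6}$ — which runs for time comparable to $\alpha_n(6\ell_n^2 - 6)L_n^2 \approx L_{n+1}^2$, a scale far larger than $L_n$ — regularizes it so that the oscillation over a ball of radius $O(\tilde D_n)$ around the origin is of order $(L_n/L_{n+1})^\beta \norm{g}_{C^{0,\beta}} \cdot L_n^\beta$ or better; combined with Proposition \ref{o_localize} to discard the far field, this is dominated by $C L_n^{\beta - 7(\delta - 5a)}\norm{g}_{C^{0,\beta}}$. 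I would then observe $(\tilde R_n)^6 g(0,\omega) = (\tilde R_n)^6 R_1 f(0,\omega)$ is exactly the integrand of $\pi_n(f)$.

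Taking expectations: on $A_n$, the above chain yields $\abs{(\tilde R_{n+1})^6 R_1 f(0,\omega) - (\tilde R_n)^6 R_1 f(0,\omega)} \leq C L_n^{\beta - 7(\delta - 5a)}\norm{f}_{L^\infty(\mathbb{R}^d\times\Omega)}$ pointwise. On the complement $\Omega \setminus A_n$, I bound both integrands crudely by $\norm{f}_{L^\infty(\mathbb{R}^d\times\Omega)}$ (using the maximum principle bounds in Propositions \ref{o_sol} and \ref{o_localsol}), so that this part contributes at most $2\,\mathbb{P}(\Omega\setminus A_n)\norm{f}_{L^\infty} \leq C L_n^{(2(1+a)^2-1)d - M_0}\norm{f}_{L^\infty}$ by Proposition \ref{o_probability}. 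Since $M_0$ is chosen sufficiently large relative to $a$ and $d$ (see \eqref{delta}), this tail term is also dominated by $C L_n^{\beta - 7(\delta - 5a)}\norm{f}_{L^\infty}$, and integrating the pointwise bound over $A_n$ and adding the tail gives the claim. The main obstacle is the middle step — establishing that the long-time heat flow $(\overline{R}_n)^{6\ell_n^2-6}$ genuinely kills the scale-$L_n$ fluctuations of $(\tilde R_n)^6 g$ down to the required power $L_n^{\beta - 7(\delta-5a)}$ near the origin — which requires carefully tracking the Hölder norm of $(\tilde R_n)^6 g$ at scale $L_n$ and exploiting the gap between $L_n$ and $L_{n+1}$ together with the uniform ellipticity lower bound $\alpha_n \geq \tfrac{1}{2\nu}$ from Theorem \ref{effectivediffusivity}; the choices in \eqref{Holderexponent} and \eqref{delta} are exactly what make the exponents line up.
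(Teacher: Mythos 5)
Your opening and closing moves match the paper: replacing $f$ by $g=R_1f$ via Proposition \ref{o_weak}, invoking Proposition \ref{o_main} with $k=6$ on the event $A_n$, handling the discrepancy between $(R_{n+1})^6$ and $(\tilde{R}_{n+1})^6$ with Control \ref{localization}, and disposing of $\Omega\setminus A_n$ with Proposition \ref{o_probability} are all exactly the paper's steps. But the middle step you flag as ``where the quantitative content lives'' is not merely the main obstacle --- it is false as a pointwise statement, and no amount of tracking H\"older norms will rescue it. You claim $\abs{(\overline{R}_n)^{6\ell_n^2-6}(\tilde{R}_n)^6g(0,\omega)-(\tilde{R}_n)^6g(0,\omega)}\leq CL_n^{\beta-7(\delta-5a)}$. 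The bound $\abs{(\tilde{R}_n)^6 g}_n\leq C$ only controls oscillation at scale $L_n$, so the best deterministic estimate available for $h=(\tilde{R}_n)^6g$ is $\abs{(\overline{R}_n)^{m}h(0)-h(0)}\leq C\abs{h}_n\, m^{\beta/2}$, which for $m=6\ell_n^2-6$ is of order $\ell_n^\beta=L_n^{a\beta}$ --- it grows. The heat flow run for time comparable to $L_{n+1}^2$ replaces the value at the origin by a spatial average over scale $L_{n+1}$, and the stationary field $y\mapsto(\tilde{R}_n)^6g(y,\omega)$ has order-one fluctuations over that scale (for $f=f_E$ it is a hitting probability that genuinely varies with the environment); the smoothness you invoke is a property of the \emph{output} of the heat flow, not a proximity of output to input. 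Consequently your claimed pointwise bound $\abs{(\tilde{R}_{n+1})^6R_1f(0,\omega)-(\tilde{R}_n)^6R_1f(0,\omega)}\leq CL_n^{\beta-7(\delta-5a)}\norm{f}_{L^\infty(\mathbb{R}^d\times\Omega)}$ on $A_n$ is also false --- it would already yield the almost-sure convergence that the paper only obtains much later, and only after exploiting the finite range dependence through the variance estimate of Proposition \ref{u_var}.

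The repair is to give up on the pointwise comparison entirely and take expectations first. By stationarity (Propositions \ref{o_sol} and \ref{o_localsol}) the function $y\mapsto\mathbb{E}\left[(\tilde{R}_n)^6R_1f(y,\omega)\right]$ is constant, equal to $\pi_n(f)$; since $(\overline{R}_n)^{6\ell_n^2-6}$ is a deterministic convolution against a probability density, Fubini gives $\mathbb{E}\left[(\overline{R}_n)^{6\ell_n^2-6}(\tilde{R}_n)^6R_1f(0,\omega)\right]=\pi_n(f)$ exactly, with no error term. One then only needs to integrate over $A_n$ the pointwise bound you correctly established in your first step, and add the trivial bound on the complement --- which is precisely the paper's proof.
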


\begin{proof}  Fix $n\geq 0$ and $f\in L^\infty(\mathbb{R}^d\times\Omega)$ satisfying (\ref{o_stationary}).  Since $$\norm{R_1f}_{L^\infty(\mathbb{R}^d\times\Omega)}\leq\norm{f}_{L^\infty(\mathbb{R}^d\times\Omega)},$$ we have, for each $\omega\in A_n$, using Control \ref{localization} and (\ref{o_localized}), \begin{equation}\label{o_cauchy_1} \abs{\left(R_{n+1}\right)^6R_1f(0,\omega)-\left(\tilde{R}_{n+1}\right)^6R_1f(0,\omega)}\leq 6e^{-\kappa_{n+1}} \norm{f}_{L^\infty(\mathbb{R}^d\times\Omega)},\end{equation} and, using Proposition \ref{o_weak} and Proposition \ref{o_main} for $k=6$, for $C>0$ independent of $n$ and $f$, \begin{multline}\label{o_cauchy_2}\abs{\left(R_{n+1}\right)^6R_1f(0,\omega)-\left(\overline{R}_n\right)^{6\ell_n^2-6}\left(\tilde{R}_n\right)^6R_1f(0,\omega)}\leq CL_n^{\beta-7(\delta-5a)}\norm{R_1f(x,\omega)}_{C^{0,\beta}(\mathbb{R}^d)} \\ \leq CL_n^{\beta-7(\delta-5a)}\norm{f}_{L^\infty(\mathbb{R}^d\times\Omega)}.\end{multline}  Since (\ref{Holderexponent}), (\ref{L}), (\ref{kappa}) and (\ref{delta}) imply that there exists $C>0$ satisfying, for each $n\geq 0$, $$e^{-\kappa_{n+1}}\leq CL_n^{\beta-7(\delta-5a)},$$ we have, for each $\omega\in A_n$, in view of (\ref{o_cauchy_1}) and (\ref{o_cauchy_2}), for $C>0$ independent of $n$, \begin{equation}\label{o_cauchy_3} \abs{\left(\tilde{R}_{n+1}\right)^6R_1f(0,\omega)-\left(\overline{R}_n\right)^{6\ell_n^2-6}\left(\tilde{R}_n\right)^6R_1f(0,\omega)}\leq CL_n^{\beta-7(\delta-5a)}\norm{f}_{L^\infty(\mathbb{R}^d\times\Omega)}.\end{equation}

Therefore, since (\ref{o_stationary}), Proposition \ref{o_sol} and Proposition \ref{o_localsol} imply that, for each $x\in\mathbb{R}^d$, $$\mathbb{E}\left(\left(\overline{R}_n\right)^{6\ell_n^2-6}\left(\tilde{R}_n\right)^6R_1f(x,\omega)\right)=\pi_n(f),$$ and since (\ref{Holderexponent}) and (\ref{delta}) imply that, for each $n\geq 0$, $$L_{n}^{(2(1+a)^2-1)d-M_0}\leq L_n^{\beta-7(\delta-5a)},$$ Proposition \ref{o_probability} and (\ref{o_cauchy_3}) imply that, for $C>0$ independent of $n$, \begin{multline*}\abs{\pi_{n+1}(f)-\pi_n(f)}\leq CL_n^{\beta-7(\delta-5a)}\norm{f}_{L^\infty(\mathbb{R}^d\times\Omega)}+2\norm{f}_{L^\infty(\mathbb{R}^d\times\Omega)}\mathbb{P}\left(\Omega\setminus A_n\right) \\ \leq CL_n^{\beta-7(\delta-5a)}\norm{f}_{L^\infty(\mathbb{R}^d\times\Omega)},\end{multline*} which, since $n\geq 0$ and $f\in L^\infty(\mathbb{R}^d\times\Omega)$ were arbitrary, completes the argument.  \end{proof}

\begin{prop}\label{o_identify}  Assume (\ref{steady}) and (\ref{constants}).  For each $f\in L^\infty(\mathbb{R}^d\times\Omega)$ satisfying (\ref{o_stationary}) there exists a unique $\overline{\pi}(f)\in\mathbb{R}$ satisfying $$\overline{\pi}(f)=\lim_{n\rightarrow\infty}\pi_n(f).$$  Furthermore, for each $n\geq 0$, for $C>0$ independent of $n$ and $f$, $$\abs{\pi_n(f)-\overline{\pi}(f)}\leq C\norm{f}_{L^\infty(\mathbb{R}^d\times\Omega)}\sum_{m=n}^\infty L_m^{\beta-7(\delta-5a)}.$$\end{prop}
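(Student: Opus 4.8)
The plan is to deduce everything directly from the Cauchy estimate of Proposition \ref{o_cauchy}. First I would record two elementary facts about the exponent $\beta-7(\delta-5a)$ and the scales $L_n$: by (\ref{Holderexponent}) and (\ref{delta}) we have $\beta-7(\delta-5a)<0$, and by (\ref{L}) we have $L_{n+1}\geq\frac12 L_n^{1+a}$ with $a>0$, so that $\log L_n$ grows at least geometrically in $n$. Consequently the series $\sum_{m=0}^\infty L_m^{\beta-7(\delta-5a)}$ converges (indeed very rapidly, being dominated by a super-geometric series), so in particular its tails $\sum_{m=n}^\infty L_m^{\beta-7(\delta-5a)}$ tend to $0$ as $n\to\infty$.

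Next, fix $f\in L^\infty(\mathbb{R}^d\times\Omega)$ satisfying (\ref{o_stationary}). For integers $n\leq N$, Proposition \ref{o_cauchy} and the triangle inequality give
$$\abs{\pi_N(f)-\pi_n(f)}\leq\sum_{m=n}^{N-1}\abs{\pi_{m+1}(f)-\pi_m(f)}\leq C\norm{f}_{L^\infty(\mathbb{R}^d\times\Omega)}\sum_{m=n}^{N-1}L_m^{\beta-7(\delta-5a)},$$
with $C$ the constant of Proposition \ref{o_cauchy}. Since the right-hand side is bounded by the tail of a convergent series, which vanishes as $n\to\infty$, the sequence $\left\{\pi_n(f)\right\}_{n=0}^\infty$ is Cauchy in $\mathbb{R}$ and hence converges to some real number, which I denote $\overline{\pi}(f)$; uniqueness of this limit is automatic since limits of real sequences are unique.

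Finally, for the quantitative bound I would let $N\to\infty$ in the displayed inequality. As $\pi_N(f)\to\overline{\pi}(f)$, this yields
$$\abs{\pi_n(f)-\overline{\pi}(f)}\leq C\norm{f}_{L^\infty(\mathbb{R}^d\times\Omega)}\sum_{m=n}^\infty L_m^{\beta-7(\delta-5a)},$$
which is precisely the claimed estimate. There is no genuine obstacle in this argument; the only points needing attention are the negativity of the exponent and the convergence of the series, both of which are immediate from the standing choices (\ref{Holderexponent}), (\ref{delta}) and the growth relation (\ref{L}).
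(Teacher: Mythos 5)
Your proposal is correct and follows essentially the same route as the paper: convergence of the series $\sum_m L_m^{\beta-7(\delta-5a)}$ from the negativity of the exponent and the growth of the $L_m$, the Cauchy property via telescoping Proposition \ref{o_cauchy}, and the tail estimate by passing to the limit in the partial sums. The only cosmetic difference is that the paper invokes the ratio test where you argue via the super-geometric growth of $\log L_n$; both are immediate.
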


\begin{proof}  In view of (\ref{Holderexponent}), (\ref{L}) and (\ref{delta}), since $\beta-7(\delta-5a)<0$, the ratio test implies that $$\sum_{m=0}^\infty L_m^{\beta-7(\delta-5a)}<\infty.$$  Therefore, for each $f\in L^\infty(\mathbb{R}^d\times\Omega)$ satisfying (\ref{o_stationary}), Proposition \ref{o_cauchy} implies that the sequence $\left\{\pi_n(f)\right\}_{n=1}^\infty$ is Cauchy.  Therefore, for each $f\in L^\infty(\mathbb{R}^d\times\Omega)$ satisfying (\ref{o_stationary}), there exists a unique $\overline{\pi}(f)\in\mathbb{R}$ such that \begin{equation}\label{o_identify_1} \lim_{n\rightarrow\infty}\pi_n(f)=\overline{\pi}(f).\end{equation}  Furthermore, for each $f\in L^\infty(\mathbb{R}^d\times\Omega)$ satisfying (\ref{o_stationary}), the triangle inequality, Proposition \ref{o_cauchy} and (\ref{o_identify_1}) imply that, for each $n\geq 0$, for $C>0$ independent of $n$ and $f$, $$\abs{\pi_n(f)-\overline{\pi}(f)}\leq \sum_{m=n}^\infty\abs{\pi_{m+1}(f)-\pi_m(f)}\leq C\norm{f}_{L^\infty(\mathbb{R}^d\times\Omega)}\sum_{m=n}^\infty L_m^{\beta-7(\delta-5a)},$$ which completes the argument.  \end{proof}

We now define what we show in the next section to be the unique invariant measure.  For every $E\in\mathcal{F}$, write ${\bf{1}}_E:\Omega\rightarrow \mathbb{R}$ for the indicator function of $E\subset\Omega$, and define $f_E:\mathbb{R}^d\times\Omega\rightarrow\mathbb{R}$ using the translation group $\left\{\tau_x\right\}_{x\in\mathbb{R}^d}$, see (\ref{transgroup}), \begin{equation}\label{o_translate} f_E(x,\omega)={\bf{1}}_E(\tau_x\omega).\end{equation}  We define $\pi:\mathcal{F}\rightarrow \mathbb{R}$, for each $E\in\mathcal{F}$, by the rule \begin{equation}\label{o_p} \pi(E)=\overline{\pi}(f_E),\end{equation} and prove now that $\pi$ defines a probability measure on $(\Omega,\mathcal{F})$ which is absolutely continuous with respect to $\mathbb{P}$.

\begin{prop}\label{o_pmeasure}  Assume (\ref{steady}) and (\ref{constants}).  The function $\pi:\mathcal{F}\rightarrow\mathbb{R}$ defined in (\ref{o_p}) defines a probability measure on $(\Omega,\mathcal{F})$ which is absolutely continuous with respect to $\mathbb{P}$.\end{prop}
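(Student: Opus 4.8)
The plan is to check, in turn, that $\pi$ is a non-negative, finitely additive set function normalized by $\pi(\emptyset)=0$ and $\pi(\Omega)=1$; that it is absolutely continuous with respect to $\mathbb{P}$; and finally that it is countably additive. The last point will use crucially the fact, furnished by Proposition \ref{o_identify}, that $\pi_n\to\overline{\pi}$ at a rate depending only on the $L^\infty$-norm of the initial data, which will let me interchange the limit $n\to\infty$ with other limiting operations.

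\textbf{Elementary properties.} For each $n\geq 0$, the operator $(\tilde{R}_n)^6R_1$ on $L^\infty(\mathbb{R}^d\times\Omega)$ is linear, preserves non-negativity by the maximum principle for (\ref{o_eq}) and (\ref{o_localized}) (Propositions \ref{o_sol} and \ref{o_localsol}), fixes the constant function $\mathbf{1}$ by uniqueness, and is an $L^\infty$-contraction; hence $\pi_n$ is a non-negative linear functional with $\pi_n(\mathbf{1})=1$ and $\abs{\pi_n(f)}\leq\norm{f}_{L^\infty(\mathbb{R}^d\times\Omega)}$. Passing to the limit via Proposition \ref{o_identify}, $\overline{\pi}$ inherits these properties on the space of stationary $L^\infty$ functions. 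Since each $f_E$ is stationary with $\norm{f_E}_{L^\infty(\mathbb{R}^d\times\Omega)}\leq 1$, since $f_\emptyset\equiv 0$ and $f_\Omega\equiv\mathbf{1}$, and since $f_{E\cup F}=f_E+f_F$ for disjoint $E,F\in\mathcal{F}$, it follows immediately that $\pi(\emptyset)=0$, $\pi(\Omega)=1$, $0\leq\pi(E)\leq 1$, and $\pi$ is finitely additive.

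\textbf{Absolute continuity.} Here I would introduce the transition kernel of $(\tilde{R}_n)^6R_1$. By Proposition \ref{o_localsol} and the strong Markov property there is, for each $\omega$, a probability measure $\mu^n_\omega$ on $\mathbb{R}^d$ — the law under $P_{0,\omega}$ of the position after six successive runs of the diffusion stopped at the boundary of the relevant ball — with $(\tilde{R}_n)^6g(0,\omega)=\int g\,d\mu^n_\omega$; composing with the heat-kernel representation $R_1f(z,\omega)=\int p(z,1,y,\omega)f(y,\omega)\,dy$ of Proposition \ref{o_weak} gives
\[
\pi_n(f)=\mathbb{E}\Big(\int_{\mathbb{R}^d}K_n(y,\omega)f(y,\omega)\,dy\Big),\qquad K_n(y,\omega)=\int_{\mathbb{R}^d}p(z,1,y,\omega)\,d\mu^n_\omega(z).
\]
The point of putting the smoothing step $R_1$ first is that, by the Gaussian bound (\ref{o_weak_2}) and the fact that $\mu^n_\omega$ is a probability measure, $0\leq K_n(y,\omega)\leq C$ uniformly in $y,\omega,n$ while $\int K_n(y,\omega)\,dy\leq 1$. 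Now if $\mathbb{P}(E)=0$, then since $f_E(y,\omega)=\mathbf{1}_E(\tau_y\omega)=\mathbf{1}_{\tau_y^{-1}E}(\omega)$ and $\tau_y$ is measure preserving, $\mathbb{P}(\{\omega:\tau_y\omega\in E\})=0$ for every $y$; by Tonelli and the boundedness of $K_n$,
\[
\pi_n(f_E)=\int_{\mathbb{R}^d}\Big(\int_{\{\tau_y\omega\in E\}}K_n(y,\omega)\,d\mathbb{P}\Big)\,dy=0
\]
for every $n$, so $\pi(E)=\overline{\pi}(f_E)=\lim_n\pi_n(f_E)=0$. This step is the main obstacle: what matters is not merely that $(\tilde{R}_n)^6R_1$ is given by probability kernels, but that these kernels admit densities $K_n(\cdot,\omega)$ that are \emph{bounded in $\omega$} (and jointly measurable), which is exactly what makes the inner integral vanish on a $\mathbb{P}$-null set of environments; this is why the composition must be arranged with $R_1$ — the only step with a pointwise heat-kernel bound — applied to the data before the localized (and only possibly boundary-degenerate) evolution.

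\textbf{Countable additivity.} Given finite additivity and $\pi(\Omega)=1$, it suffices to show $\pi(E_k)\to 0$ whenever $E_k\downarrow\emptyset$ in $\mathcal{F}$. For such $E_k$ the stationary functions $f_{E_k}$ decrease pointwise to $0$ with $0\leq f_{E_k}\leq 1$. For fixed $n$, since $\mu^n_\omega$ is a finite measure, two applications of dominated convergence give $\pi_n(f_{E_k})=\mathbb{E}\big(\int K_n(y,\omega)f_{E_k}(y,\omega)\,dy\big)\to 0$ as $k\to\infty$. On the other hand Proposition \ref{o_identify} yields $\abs{\pi_n(f_{E_k})-\pi(E_k)}\leq C\sum_{m=n}^\infty L_m^{\beta-7(\delta-5a)}=:\varepsilon_n$ with $\varepsilon_n\to 0$, uniformly in $k$. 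Swapping the two limits in the standard way — choose $n$ with $\varepsilon_n<\varepsilon/2$, then $K$ with $\abs{\pi_n(f_{E_k})}<\varepsilon/2$ for $k\geq K$, and use $\pi(E_k)\geq 0$ — gives $\pi(E_k)<\varepsilon$ for $k\geq K$, hence $\pi(E_k)\to 0$. Thus $\pi$ is countably additive, and together with the elementary properties and the absolute continuity this completes the proof.
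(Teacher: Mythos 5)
Your proof is correct and takes essentially the same route as the paper's: positivity and normalization via the comparison principle, absolute continuity via the Gaussian bound on the time-one transition density together with Tonelli and the measure preservation of $\tau_y$, and countable additivity by using the uniform-in-$f$ rate of Proposition \ref{o_identify} to interchange the limit in $n$ with the limit over the sets. The only differences are cosmetic: you compose the six localized runs and the heat kernel into a single bounded density $K_n$, whereas the paper first shows $R_1f_E\equiv 0$ almost surely (via an intersection over $x\in\mathbb{Q}^d$ and continuity) and then applies $(\tilde{R}_n)^6$, and you phrase countable additivity as continuity at $\emptyset$ rather than working with the partial unions directly.
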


\begin{proof}  For each $E\in\mathcal{F}$, since $0\leq f_E\leq 1$ on $\mathbb{R}^d\times\Omega$, the comparison principle implies that, for each $n\geq 0$, $$0\leq \pi_n(f_E)\leq 1, $$ and, therefore, for each $E\in\mathcal{F}$, \begin{equation}\label{o_pmeasure_1} 0\leq \pi(E)=\overline{\pi}(f_E)\leq 1.\end{equation}  Furthermore, since $f_\Omega$ is identically one and, since $f_\emptyset$ is identically zero, we have, for each $n\geq 0$, $$\pi_n(f_\Omega)=1\;\;\textrm{and}\;\;\pi_n(f_\emptyset)=0.$$  Therefore, \begin{equation}\label{o_pmeasure_2}\pi(\Omega)=1\;\;\textrm{and}\;\;\pi(\emptyset)=0.\end{equation}  It remains to prove that $\pi$ is countably additive and absolutely continuous.

Let $\left\{A_i\right\}_{i=1}^\infty\subset\mathcal{F}$ be a countable collection of disjoint subsets.  Since, for each $n\geq 0$ and $1\leq m\leq \infty$, $$\pi_n(f_{\bigcup_{i=1}^m A_i})=\int_{\Omega}\int_{\C([0,\infty);\mathbb{R}^d)}R_1f_{\bigcup_{i=1}^m A_i}(X_{6L_n^2\wedge T_n},\omega)\;dP_{0,\omega}d\mathbb{P},$$ for the stopping time $$T_n=\inf\left\{\;s\geq 0\;|\; X_s\notin B_{6\tilde{D}_n}\;\right\},$$ the dominated convergence theorem implies that, for each $n\geq 0$, there exists $k_n\geq n$ such that $$ \abs{\pi_n(f_{\bigcup_{i=1}^\infty A_i})-\pi_n(f_{\bigcup_{i=1}^{k_n}A_i})}\leq \frac{1}{n}.$$  Therefore, in view of Proposition \ref{o_identify}, since each initial condition has unit $L^\infty$ norm, for $C>0$ independent of $n$, \begin{equation}\label{o_pmeasure_4} \left|\pi\left(\bigcup_{i=1}^\infty A_i\right)-\pi\left(\bigcup_{i=1}^{k_n}A_i\right)\right|\leq \frac{1}{n}+C\sum_{m=n}^\infty L_m^{\beta-7(\delta-5a)}.\end{equation}  Furthermore, since the $\left\{A_i\right\}_{i=1}^\infty$ are disjoint, for each $n\geq 0$, \begin{equation}\label{o_pmeasure_5} \pi\left(\bigcup_{i=1}^{k_n}A_i\right)=\sum_{i=1}^{k_n}\pi(A_i).\end{equation}  Therefore, in view of (\ref{o_pmeasure_4}) and (\ref{o_pmeasure_5}), since we choose $k_n\geq n$, \begin{multline*}\lim_{n\rightarrow\infty}\left|\pi\left(\bigcup_{i=1}^\infty A_i\right)-\pi\left(\bigcup_{i=1}^{k_n}A_i\right)\right|=\lim_{n\rightarrow\infty}\left|\pi\left(\bigcup_{i=1}^\infty A_i\right)-\sum_{i=1}^{k_n}\pi\left(A_i\right)\right| \\ =\left|\pi\left(\bigcup_{i=1}^\infty A_i\right)-\sum_{i=1}^\infty\pi(A_i)\right|=0,\end{multline*} which, since the family $\left\{A_i\right\}_{i=1}^\infty$ was arbitrary, completes the proof of countable additivity.

We now prove the absolute continuity.  We first show that whenever $E\in\mathcal{F}$ satisfies $\mathbb{P}(E)=0$ we have $R_1f_E(x,\omega)=0$ on $\mathbb{R}^d$ for almost every $\omega\in\Omega$.  To do so, we recall that there exists a density $p(x,1,y,\omega)$ satisfying for each $x\in\mathbb{R}^d$, $\omega\in\Omega$ and $E\in\mathcal{F}$, $$R_1f_E(x,\omega)=\int_{\mathbb{R}^d}p(x,1,y,\omega)f_E(y,\omega)\;dy.$$  Furthermore, for each $x\in\mathbb{R}^d$ and $\omega\in\Omega$, the probability measure defined by $p(x,1,y,\omega)\;dy$ on $\mathbb{R}^d$ is equivalent to Lebesgue measure.  See, for instance, \cite{Fr}.

Fix $E\in\mathcal{F}$ satisfying $\mathbb{P}(E)=0$.  Then, for each $x\in\mathbb{R}^d$, using (\ref{transgroup}) and $\mathbb{P}(E)=0$, by Fubini's theorem $$\mathbb{E}\left(R_1f_E(x,\omega)\right)=\int_{\Omega}\int_{\mathbb{R}^d}p(x,1,y,\omega){\bf{1}}_E(\tau_y\omega)\;dyd\mathbb{P}=\int_{\mathbb{R}^d}\int_{\Omega}p(x,1,y,\omega){\bf{1}}_E(\tau_y\omega)\;d\mathbb{P}dy=0,$$ since ${\bf{1}}_E(\tau_y\omega)=0$ almost everywhere in $\Omega$ for every $y\in\mathbb{R}^d$.  Therefore, Fubini's theorem implies that, for every $x\in\mathbb{R}^d$, there exists a subset $A_x\subset\Omega$ of full probability such that, for every $\omega\in A_x$, $$R_1f_E(x,\omega)=0.$$  Define the subset of full probability $$A=\bigcap_{x\in\mathbb{Q}^d} A_x,$$ and observe that, for each $\omega\in A$ and $x\in\mathbb{Q}^d$, $$R_1f_E(x,\omega)=0.$$  Since Proposition \ref{o_weak} implies that, for every $\omega\in\Omega$, we have $R_1f_E(x,\omega)\in C^{0,\beta}(\mathbb{R}^d)$, we conclude that, for every $x\in\mathbb{R}^d$ and $\omega\in A$, $$R_1f_E(x,\omega)=0,$$ and, therefore, for every $\omega\in A$ and $n\geq 0$, $$\left(\tilde{R}_n\right)^6R_1f_E(0,\omega)=0.$$  Since $\mathbb{P}(A)=1$, this implies that, for each $n\geq 0$, $\pi_n(f_E)=0$ and, therefore, that $\pi(E)=0$.  Since $E\in\mathcal{F}$ satisfying $\mathbb{P}(E)=0$ was arbitrary, this completes the argument.  \end{proof}

In the final proposition of this section, we prove that for each $f\in L^\infty(\mathbb{R}^d\times\Omega)$ satisfying (\ref{o_stationary}), the constant $\overline{\pi}(f)$ characterizes the integral of $f(0,\omega)$ with respect to $\pi$.  This is essentially an immediate consequence of the definition of $\pi$ and the fact that the kernels $R_t$ preserve the $L^\infty$ norm of initial data.

\begin{prop}\label{o_integral}  Assume (\ref{steady}) and (\ref{constants}).  For every $f\in L^\infty(\mathbb{R}^d\times\Omega)$ satisfying (\ref{o_stationary}), $$\overline{\pi}(f)=\int_{\Omega}f(0,\omega)\;d\pi.$$\end{prop}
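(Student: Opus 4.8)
The plan is to deduce the identity from the defining relation $\pi(E)=\overline{\pi}(f_E)$ of (\ref{o_p}) together with linearity of $\overline{\pi}$ and a density argument, exactly as one recovers the Lebesgue integral against a measure from its values on indicators.

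First I would record the structural properties of $\overline{\pi}$. Since $R_1$ and $\tilde{R}_n$ are the linear solution operators of the linear parabolic problems (\ref{o_eq}) and (\ref{o_localized}), and since expectation is linear, the functional $\pi_n(f)=\mathbb{E}\left(\left(\tilde{R}_n\right)^6R_1f(0,\omega)\right)$ defined in (\ref{o_pin}) is linear in $f$ on the vector space of those $f\in L^\infty(\mathbb{R}^d\times\Omega)$ satisfying (\ref{o_stationary}); passing to the limit via Proposition \ref{o_identify}, $\overline{\pi}$ is linear on this space. Moreover the comparison principle for (\ref{o_eq}) and (\ref{o_localized}) (Proposition \ref{o_sol} and Proposition \ref{o_localsol}) gives $\norm{\left(\tilde{R}_n\right)^6R_1f}_{L^\infty(\mathbb{R}^d)}\leq\norm{f}_{L^\infty(\mathbb{R}^d\times\Omega)}$, whence $\abs{\pi_n(f)}\leq\norm{f}_{L^\infty(\mathbb{R}^d\times\Omega)}$ and therefore $\abs{\overline{\pi}(f)}\leq\norm{f}_{L^\infty(\mathbb{R}^d\times\Omega)}$. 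Finally I would note that, for $f$ satisfying (\ref{o_stationary}), $\norm{f}_{L^\infty(\mathbb{R}^d\times\Omega)}=\norm{f(0,\cdot)}_{L^\infty(\Omega)}$, since $f(x,\omega)=f(0,\tau_x\omega)$ and each $\tau_x$ preserves $\mathbb{P}$; in particular modifying $f(0,\cdot)$ on a $\mathbb{P}$-null set changes $f$ only on a null subset of $\mathbb{R}^d\times\Omega$, hence (by the same argument as in Proposition \ref{o_pmeasure}, using that $p(x,1,y,\omega)\,dy$ is equivalent to Lebesgue measure) leaves each $\pi_n(f)$ and thus $\overline{\pi}(f)$ unchanged.

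Next I would check the identity on simple data. Given a finite measurable partition $\left\{E_i\right\}_{i=1}^N\subset\mathcal{F}$ of $\Omega$ and constants $c_1,\ldots,c_N\in\mathbb{R}$, the function $f(x,\omega)=\sum_{i=1}^Nc_i{\bf{1}}_{E_i}(\tau_x\omega)=\sum_{i=1}^Nc_if_{E_i}(x,\omega)$ satisfies (\ref{o_stationary}), has $f(0,\omega)=\sum_{i=1}^Nc_i{\bf{1}}_{E_i}(\omega)$, and by linearity of $\overline{\pi}$ and (\ref{o_p}), $\overline{\pi}(f)=\sum_{i=1}^Nc_i\overline{\pi}(f_{E_i})=\sum_{i=1}^Nc_i\pi(E_i)=\int_\Omega f(0,\omega)\;d\pi$.

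Finally I would pass to a general stationary $f$ by approximation in $L^\infty$. For $f$ satisfying (\ref{o_stationary}) with $\norm{f(0,\cdot)}_{L^\infty(\Omega)}\leq M$ and $\epsilon>0$, I partition $[-M,M]$ into intervals $[t_i,t_{i+1})$ of length at most $\epsilon$, set $E_i=\left\{\;\omega\in\Omega\;|\;f(0,\omega)\in[t_i,t_{i+1})\;\right\}$, and let $g_\epsilon$ be the stationary function associated as above to $\sum_it_i{\bf{1}}_{E_i}$; then $\norm{f-g_\epsilon}_{L^\infty(\mathbb{R}^d\times\Omega)}=\norm{f(0,\cdot)-g_\epsilon(0,\cdot)}_{L^\infty(\Omega)}\leq\epsilon$. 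Combining $\abs{\overline{\pi}(f)-\overline{\pi}(g_\epsilon)}=\abs{\overline{\pi}(f-g_\epsilon)}\leq\epsilon$ from the first step, the identity for $g_\epsilon$ from the second step, and $\abs{\int_\Omega f(0,\omega)\;d\pi-\int_\Omega g_\epsilon(0,\omega)\;d\pi}\leq\epsilon\,\pi(\Omega)=\epsilon$ (using that $\pi$ is a probability measure by Proposition \ref{o_pmeasure}), and letting $\epsilon\rightarrow 0$, gives $\overline{\pi}(f)=\int_\Omega f(0,\omega)\;d\pi$. I do not expect a serious obstacle here; the load-bearing step is really just the first one — establishing that $\overline{\pi}$ is linear and $L^\infty$-bounded and descends to a functional of $f(0,\cdot)$ — after which steps two and three are routine, the only delicate points being the identification $\norm{f}_{L^\infty(\mathbb{R}^d\times\Omega)}=\norm{f(0,\cdot)}_{L^\infty(\Omega)}$ under stationarity and the insensitivity of $\pi_n$ to $\mathbb{P}$-null modifications of the initial data.
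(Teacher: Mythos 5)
Your proposal is correct and follows essentially the same route as the paper: the identity holds for indicator functions by the definition (\ref{o_p}), the functionals $\pi_n$ and hence $\overline{\pi}$ are linear and $1$-Lipschitz in the $L^\infty$ norm by the comparison principle, and the general case follows by approximating $f(0,\cdot)$ with simple functions. You simply spell out the details (linearity, simple-function step, null-set insensitivity) that the paper compresses into the phrase ``the definition of the Lebesgue integral.''
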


\begin{proof}  By definition, see (\ref{o_p}), for every subset $E\in\mathcal{F}$, for $f_E$ defined in (\ref{o_translate}), \begin{equation}\label{o_integral_1}\overline{\pi}(f_E)=\pi(E)=\int_{\Omega}f(0,\omega)\;d\pi=\int_{\Omega}{\bf{1}}_E(\omega)\;d\pi.\end{equation}  And, since for every $t\geq 0$, $n\geq 0$ $\omega\in\Omega$ and $f\in L^\infty(\mathbb{R}^d)$, $$\norm{R_tf(x,\omega)}_{L^\infty(\mathbb{R}^d)}\leq \norm{f}_{L^\infty(\mathbb{R}^d)}\;\;\textrm{and}\;\;\norm{\tilde{R}_nf(x,\omega)}_{L^\infty(\mathbb{R}^d)}\leq \norm{f}_{L^\infty(\mathbb{R}^d)},$$  we have, for each $n\geq 0$ and $f,g\in L^\infty(\mathbb{R}^d\times\Omega)$ satisfying (\ref{o_stationary}), $$\abs{\pi_n(f-g)}=\abs{\pi_n(f)-\pi_n(g)}\leq \norm{f-g}_{L^\infty(\mathbb{R}^d\times\Omega)}.$$  Therefore, for every pair $f,g\in L^\infty(\mathbb{R}^d\times\Omega)$ satisfying (\ref{o_stationary}), \begin{equation}\label{o_integral_2}\abs{\overline{\pi}(f)-\overline{\pi}(g)}\leq \norm{f-g}_{L^\infty(\mathbb{R}^d\times\Omega)}.\end{equation}  The claim now follows from (\ref{o_integral_1}), (\ref{o_integral_2}) and the definition fo the Lebesgue integral.  \end{proof}

\section{The Proof of Invariance and Uniqueness}

In this section, we prove that the measure $\pi$ defined in (\ref{o_p}) is the unique invariant measure which is absolutely continuous with respect to $\mathbb{P}$.  Furthermore, if the transformation group $\left\{\tau_x\right\}_{x\in\mathbb{R}^d}$ is ergodic, then $\pi$ is mutually absolutely continuous with respect to $\mathbb{P}$ and defines an ergodic probability measure for the canonical Markov process on $\Omega$ defining (\ref{i_initial}).  We observe that, for each $t\geq 0$, $\omega\in\Omega$ and $E\in\mathcal{F}$, for $P_t(\omega,E)$ defined in (\ref{i_initial}), $$R_tf_E(0,\omega)=P_{0,\omega}(\tau_{X_t}\omega\in E)=P_t(\omega, E).$$  In order to prove invariance, therefore, it suffices to prove that, for each $t\geq 0$ and $E\in\mathcal{F}$, $$\pi(E)=\int_{\Omega}R_tf_E(0,\omega)\;d\pi.$$  See Proposition  \ref{u_invariant}.

In order to exploit the finite range dependence, see (\ref{finitedep}), we define, for each $R>0$, $t\geq 1$ and $\omega\in\Omega$, the localized kernels \begin{equation}\label{u_local} \tilde{R}_{t,R}f(x,\omega)=\tilde{u}_R(x,t,\omega),\end{equation} for $\tilde{u}:\overline{B}_R(x)\times[0,\infty)\times\Omega\rightarrow R$ satisfying \begin{equation}\label{u_local_1} \left\{\begin{array}{ll}\tilde{u}_{R,t}-\frac{1}{2}\tr(A(y,\omega)D^2\tilde{u}_R)+b(y,\omega)\cdot D\tilde{u}_R=0 & \textrm{on}\;\; B_R(x)\times(0,\infty), \\ \tilde{u}_R=f & \textrm{on}\;\;B_R(x)\times\left\{0\right\}\cup \partial B_R(x)\times[0,\infty).\end{array}\right.\end{equation}  The following proposition controls the error we make due to this localization.  And, in contrast to Control \ref{localization}, we obtain this control globally for $x\in\mathbb{R}^d$ and $\omega\in\Omega$.  Notice, however, that this control is only effective at length scales which are significantly larger than those appearing in Control \ref{localization}.

\begin{prop}\label{u_martingale}  Assume (\ref{steady}).  For each $x\in\mathbb{R}^d$, $t\geq 1$, $\omega\in\Omega$ and $R>0$,  for $C>0$ independent of $x$, $t$, $\omega$ and $R$, for every $f\in L^\infty(\mathbb{R}^d)$, $$\abs{R_tf(x,\omega)-\tilde{R}_tf(x,\omega)}\leq \norm{f}_{L^\infty(\mathbb{R}^d)}e^{-\frac{(R-Ct)_+^2}{Ct}}$$ \end{prop}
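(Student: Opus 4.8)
The plan is to use the probabilistic representation of both $R_t f$ and $\tilde R_t f$ together with a maximal estimate for the diffusion.

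First I would write, using the representation formula (cf. \eqref{o_weak_1}),
$$R_tf(x,\omega)=E_{x,\omega}\left(f(X_t)\right),$$
and, for the localized kernel, recall that $\tilde R_t f(x,\omega)=E_{x,\omega}\left(f(X_{t\wedge\tau})\right)$ where $\tau=\inf\{s\geq 0 : X_s\notin B_R(x)\}$ is the exit time of the ball $B_R(x)$; this follows from the probabilistic representation of the solution of \eqref{u_local_1}, since on the event $\{\tau>t\}$ the two solutions agree and on $\{\tau\leq t\}$ the boundary data of $\tilde u_R$ is precisely $f$. Subtracting, the two expressions coincide on the event $\{\tau>t\}=\{X^*_t<R\}$, so
$$\abs{R_tf(x,\omega)-\tilde R_tf(x,\omega)}\leq 2\norm{f}_{L^\infty(\mathbb{R}^d)}\,P_{x,\omega}\left(X^*_t\geq R\right),$$
with $X^*_t=\max_{0\leq s\leq t}\abs{X_s-x}$ as in \eqref{star}.

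It then remains to bound $P_{x,\omega}(X^*_t\geq R)$ by $Ce^{-(R-Ct)_+^2/Ct}$ uniformly in $x,\omega$. Here I would use the SDE \eqref{sde}: $X_s-x=-\int_0^s b(X_r,\omega)\,dr+M_s$ with $M_s=\int_0^s\sigma(X_r,\omega)\,dB_r$ a martingale whose quadratic variation satisfies $\langle M\rangle_s\leq C s$ by the boundedness \eqref{bounded} of $A$. Since $\abs{b}\leq C$, the drift contributes at most $Ct$ to $X^*_t$ over $[0,t]$, so $\{X^*_t\geq R\}\subset\{M^*_t\geq R-Ct\}$ whenever $R\geq Ct$ (and the estimate is trivial, with the $(\cdot)_+$ vanishing, when $R<Ct$). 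A standard exponential martingale (Bernstein-type) inequality for continuous martingales with bounded quadratic variation then gives $P_{x,\omega}(M^*_t\geq \lambda)\leq 2\exp(-\lambda^2/(2Ct))$, which yields the claim after adjusting the constant $C$ and absorbing the factor $2$.

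The only mildly delicate point is making the martingale tail estimate rigorous uniformly in $\omega$: one applies the exponential supermartingale $\exp(\theta M_s-\tfrac{\theta^2}{2}\langle M\rangle_s)$, uses $\langle M\rangle_t\leq Ct$, Doob's maximal inequality for the (sub)martingale $e^{\theta M_s}$, and optimizes over $\theta>0$ — this is the one step where a short but genuine computation is needed, though it is entirely classical. Everything else is just the identification of $\tilde R_tf$ via the stopped process and the elementary inclusion of events.
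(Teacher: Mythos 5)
Your proof is correct and follows essentially the same route as the paper: identify $\tilde R_{t,R}f$ via the process stopped at the exit time of $B_R(x)$, reduce the difference to $P_{x,\omega}(X^*_t\geq R)$, and then split $X_s-x$ into a drift bounded by $Ct$ plus a martingale with quadratic variation at most $Ct$, controlled by the standard exponential martingale inequality. The only cosmetic difference is your factor $2$ in front of $\norm{f}_{L^\infty}$, which the paper omits and which in any case is harmless for how the estimate is applied.
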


\begin{proof}  Fix $R>0$, $t\geq 1$, $x\in\mathbb{R}^d$ and $\omega\in\Omega$.  Introduce the stopping time $T_R:\C([0,\infty);\mathbb{R}^d)\rightarrow\mathbb{R}$ defined by $$T_R=\inf\left\{\;s\geq 0\;|\;X_s\notin B_R(x)\;\right\}.$$  Then, for each $f\in L^\infty(\mathbb{R}^d)$, for $X^*_t$ defined in (\ref{star}), \begin{equation}\label{u_martingale_1}\abs{R_tf(x,\omega)-\tilde{R}_{t,R}f(x,\omega)}\leq \norm{f}_{L^\infty(\mathbb{R}^d)}P_{x,\omega}\left(X^*_t\geq R\right).\end{equation}

We recall that, almost surely with respect to $P_{x,\omega}$, for  $B_s$ a Brownian motion on $\mathbb{R}^d$ under $P_{x,\omega}$ with respect to the canonical right-continuous filtration on $\C([0,\infty);\mathbb{R}^d)$, paths $X_s\in\C([0,\infty);\mathbb{R}^d)$ satisfy the stochastic differential equation $$\left\{\begin{array}{l} dX_s=-b(X_s,\omega)dt+\sigma(X_s,\omega)dB_s, \\ X_0=x.\end{array}\right.$$  Therefore, using the exponential inequality for Martingales, see Revuz, Yor \cite{RY}, and (\ref{bounded}) and (\ref{Lipschitz}), for every $\tilde{R}\geq 0$, for $C>0$ independent of $\tilde{R}$, $t$, $x$ and $\omega$, \begin{equation}\label{u_martingale_2}P_{x,\omega}\left(X^*_t\geq \tilde{R}+Ct\right)\leq e^{-\frac{\tilde{R}^2}{Ct}}.\end{equation}

Therefore, by choosing $\tilde{R}=(R-CT)_+$ in (\ref{u_martingale_2}), we conclude in view of (\ref{u_martingale_1}) that, for $C>0$ independent of $x$, $t$, $\omega$ and $R$, $$\abs{R_tf(x,\omega)-\tilde{R}_{t,R}f(x,\omega)}\leq \norm{f}_{L^\infty(\mathbb{R}^d)}e^{-\frac{(R-Ct)_+^2}{Ct}},$$ which, since $x$, $t$, $\omega$ and $R$ were arbitrary, completes the argument.  \end{proof}

We define, for each subset $A\subset\mathbb{R}^d$, the sub sigma algebra of $\mathcal{F}$ \begin{equation}\label{u_sigma} \sigma_A=\sigma\left(A(x,\omega), b(x,\omega)\;|\;x\in A\right).\end{equation}  The following proposition uses stationary, see (\ref{stationary}), to describe the interaction between the transformation group $\left\{\tau_x\right\}_{x\in\mathbb{R}^d}$ and the sigma algebras $\sigma_A$.

\begin{prop}\label{u_sigma_translate}  Assume (\ref{steady}).  For every subset $A\subset\mathbb{R}^d$ and $y\in\mathbb{R}^d$, $$\tau_y\left(\sigma_A\right)=\left\{\;\tau_x(B)\;|\;B\in \sigma_A\;\right\}=\sigma_{A-y}.$$\end{prop}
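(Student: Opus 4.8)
The plan is to deduce this from the single elementary fact that a measurable bijection pushes a generated $\sigma$-algebra forward onto the $\sigma$-algebra generated by the composed family of generators, and then to read off the identification of the generators from stationarity (\ref{stationary}).

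First I would record the structural facts about $\tau_y$. Since $\{\tau_x\}_{x\in\mathbb{R}^d}$ is a group, $\tau_y$ is a bijection of $\Omega$ with inverse $\tau_{-y}$, and both $\tau_y$ and $\tau_{-y}$ are $\mathcal{F}$-measurable (indeed measure preserving). Hence the set map $B\mapsto\tau_y(B)$ is injective, sends $\Omega$ to $\Omega$, and commutes with complements and with countable unions, so it carries any $\sigma$-algebra to a $\sigma$-algebra. Moreover $\tau_y(B)=(\tau_{-y})^{-1}(B)$, so $\tau_y(B)\in\mathcal{F}$ whenever $B\in\mathcal{F}$, and $\tau_y(\sigma_A)$ is genuinely a sub-$\sigma$-algebra of $\mathcal{F}$. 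This also makes precise the middle term in the statement: $\tau_y(\sigma_A)$ is by definition the collection $\{\tau_y(B) : B\in\sigma_A\}$.

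The key step is the identity, writing $Z_x(\omega)=(A(x,\omega),b(x,\omega))$ so that $\sigma_A=\sigma(Z_x : x\in A)$,
$$\tau_y\big(\sigma(Z_x : x\in A)\big)=\sigma\big(Z_x\circ\tau_{-y} : x\in A\big).$$
For the inclusion $\supseteq$, each generating set $(Z_x\circ\tau_{-y})^{-1}(C)=\tau_y\big(Z_x^{-1}(C)\big)$, for $C$ Borel, lies in $\tau_y(\sigma_A)$, and $\tau_y(\sigma_A)$ is a $\sigma$-algebra, so it contains the $\sigma$-algebra generated by these sets. The reverse inclusion follows by applying the same containment with $\tau_{-y}$ in place of $\tau_y$ and the family $\{Z_x\circ\tau_{-y}\}$ in place of $\{Z_x\}$: this gives $\tau_{-y}\big(\sigma(Z_x\circ\tau_{-y} : x\in A)\big)\supseteq\sigma(Z_x : x\in A)=\sigma_A$, and applying the bijection $\tau_y$ to both sides yields $\sigma(Z_x\circ\tau_{-y} : x\in A)\supseteq\tau_y(\sigma_A)$. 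One could equally run a routine good-sets argument.

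Finally, stationarity (\ref{stationary}), applied with the translation $-y$, gives $A(x,\tau_{-y}\omega)=A(x-y,\omega)$ and $b(x,\tau_{-y}\omega)=b(x-y,\omega)$, that is $Z_x\circ\tau_{-y}=Z_{x-y}$. Therefore
$$\tau_y(\sigma_A)=\sigma(Z_{x-y} : x\in A)=\sigma(Z_{x'} : x'\in A-y)=\sigma_{A-y},$$
the second equality being the reindexing $x'=x-y$ and the last the definition (\ref{u_sigma}) of $\sigma_{A-y}$. I do not anticipate any real obstacle: the content is the small lemma on images of generated $\sigma$-algebras, and the only points needing care are bookkeeping — using the bijectivity of $\tau_y$ rather than naively pushing sets around, and getting the sign of the shift right (the inverse $\tau_{-y}$ is what appears, which is why the answer is $A-y$ and not $A+y$).
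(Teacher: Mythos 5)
Your proposal is correct and follows essentially the same route as the paper: identify the generators of $\sigma_A$ as preimages $Z_x^{-1}(C)$, push them forward via $\tau_y(Z_x^{-1}(C))=(Z_x\circ\tau_{-y})^{-1}(C)$, and use stationarity (\ref{stationary}) to identify $Z_x\circ\tau_{-y}=Z_{x-y}$. The only difference is that you spell out, via the two-inclusion argument, the lemma that a bijection carries a generated $\sigma$-algebra onto the $\sigma$-algebra generated by the images of the generators, which the paper simply asserts as a consequence of invertibility.
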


\begin{proof}  Fix $A\subset\mathbb{R}^d$.  We identify $\mathcal{S}(d)$ with $\mathbb{R}^{(d+1)d/2}$ and write $\mathcal{B}_d$ and $\mathcal{B}_{(d+1)d/2}$ for the Borel sigma algebras on $\mathbb{R}^d$ and $\mathbb{R}^{(d+1)d/2}$ respectively.  Observe that $\sigma_A$ is generated by sets of the form, for fixed $x\in A$, $B_d\in \mathcal{B}_d$ and $B_{(d+1)d/2}\in\mathcal{B}_{(d+1)d/2}$, \begin{equation}\label{u_sigma_translate_1} b(x,\omega)^{-1}(B_d)=\left\{\;\omega\in\Omega\;|\;b(x,\omega)\in B_d\;\right\},\end{equation} and \begin{equation}\label{u_sigma_translate_2}A(x,\omega)^{-1}(B_{(d+1)d/2})=\left\{\;\omega\in\Omega\;|\;A(x,\omega)\in B_{(d+1)d/2}\;\right\}.\end{equation}  Furthermore, since the group $\left\{\tau_y\right\}_{y\in\mathbb{R}^d}$ is composed of invertible, measure-preserving transformations, for every fixed $y\in\mathbb{R}^d$, $$\tau_y\left(\sigma_A\right)=\left\{\;\tau_yB\;|\;B\in\sigma_A\;\right\},$$ is a sigma algebra generated by sets of the form, for fixed $x\in A$, $B_d\in \mathcal{B}_d$ and $B_{(d+1)d/2}\in\mathcal{B}_{(d+1)d/2}$, \begin{equation}\label{u_sigma_translate_3}\tau_y\left(b(x,\omega)^{-1}(B_d)\right)\;\;\textrm{and}\;\;\tau_y\left(A(x,\omega)^{-1}(B_{(d+1)d/2})\right).\end{equation}  And, in view of (\ref{stationary}), for each $y\in\mathbb{R}^d$, $x\in A$, $B_d\in \mathcal{B}_d$ and $B_{(d+1)d/2}\in\mathcal{B}_{(d+1)d/2}$, \begin{equation}\label{u_sigma_translate_4}\tau_y\left(b(x,\omega)^{-1}(B_d)\right)=b(x,\tau_{-y}\omega)^{-1}(B_d)=b(x-y,\omega)^{-1}(B_d),\end{equation} and \begin{equation}\label{u_sigma_translate_5}\tau_y\left(A(x,\omega)^{-1}(B_{(d+1)d/2})\right)=A(x,\tau_{-y}\omega)^{-1}(B_{(d+1)d/2})=A(x-y,\omega)^{-1}(B_{(d+1)d/2}).\end{equation}  We therefore conclude, using (\ref{u_sigma_translate_1}), (\ref{u_sigma_translate_2}), (\ref{u_sigma_translate_3}), (\ref{u_sigma_translate_4}) and (\ref{u_sigma_translate_5}), for each $y\in\mathbb{R}^d$, \begin{equation}\label{i_finite_6} \tau_y\left(\sigma_A\right)=\tau_y\sigma\left( A(x-y,\omega), b(x-y,\omega)\;|\;x\in A \right)=\sigma_{A-y}.\end{equation}  Since $A\subset\mathbb{R}^d$ was arbitrary, this completes the argument.  \end{proof}

In what follows, we will use the fact that $$\mathcal{F}=\sigma\left(A(x,\omega), b(x,\omega)\;|\;x\in\mathbb{R}^d\right)=\sigma\left(\bigcup_{R>0}\sigma_{B_R}\right).$$  This will allow us to obtain our general statement after considering measurable subsets $E\subset\Omega$ in the algebra of subsets $\cup_{R>0}\sigma_{B_R}$, where it is shown in the next proposition that for these subsets we can effectively apply the finite range dependence, see (\ref{finitedep}).

\begin{prop}\label{u_localize}  Assume (\ref{steady}) and (\ref{constants}).  Suppose that, for $R_1>0$, $E\in\mathcal{F}$ satisfies $E\in \sigma_{B_{R_1}}$.  For each $x\in\mathbb{R}^d$, $t\geq 1$ and $R_2>0$, see (\ref{o_translate}) and (\ref{u_local}), $$\sigma\left(\tilde{R}_{t,R_2}f_E(x,\omega)\right)\subset\sigma_{B_{R_2+R_1}(x)}.$$  \end{prop}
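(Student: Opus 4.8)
The plan is to realise the localized kernel through its probabilistic representation and then bookkeep, separately, the dependence of the two ingredients on the environment. Writing $T=\inf\{s\ge 0 : X_s\notin B_{R_2}(x)\}$ for the exit time, the solution of (\ref{u_local_1}) with $R=R_2$ and initial/lateral data $f=f_E$ admits, for each $\omega\in\Omega$, the representation
$$\tilde{R}_{t,R_2}f_E(x,\omega)=E_{x,\omega}\!\left[f_E\!\left(X_{t\wedge T},\omega\right)\right],$$
entirely analogous to the representations already used in Proposition \ref{o_weak} and Proposition \ref{local11}. Since $X_{t\wedge T}\in\overline{B_{R_2}(x)}$ $P_{x,\omega}$-almost surely, only the environment ``seen'' inside $\overline{B_{R_2}(x)}$ together with the values $f_E(y,\cdot)$ for $y\in\overline{B_{R_2}(x)}$ should enter, and I will check that each of these is $\sigma_{B_{R_2+R_1}(x)}$-measurable.

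First I would use the localization of the diffusion. By the standard measurable construction of the process (Picard iteration for the SDE (\ref{sde}) exhibits the path up to the exit time from $B_{R_2}(x)$ as a measurable functional of the driving Brownian motion and of the coefficients restricted to $\overline{B_{R_2}(x)}$), together with the localization principle for the martingale problem associated to the generator (see \cite{SV}), the $P_{x,\omega}$-law $\mu_\omega$ of the stopped path $(X_{s\wedge T})_{s\ge 0}$ on $\C([0,\infty);\overline{B_{R_2}(x)})$ depends on $\omega$ only through $(A(y,\omega),b(y,\omega))_{y\in\overline{B_{R_2}(x)}}$; by the spatial continuity (\ref{Lipschitz}) the boundary values are pointwise limits of interior values, so this is $\sigma_{B_{R_2}(x)}$-measurable, hence a fortiori $\sigma_{B_{R_2+R_1}(x)}$-measurable. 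Next I would control the data via Proposition \ref{u_sigma_translate}: for each fixed $y\in\mathbb{R}^d$ one has $\{\omega:f_E(y,\omega)=1\}=\tau_y^{-1}(E)=\tau_{-y}(E)$, and since $E\in\sigma_{B_{R_1}}$, Proposition \ref{u_sigma_translate} applied to $A=B_{R_1}$ with the translation $\tau_{-y}$ gives $\tau_{-y}(E)\in\sigma_{B_{R_1}+y}=\sigma_{B_{R_1}(y)}$; thus $f_E(y,\cdot)$ is $\sigma_{B_{R_1}(y)}$-measurable. For $|y-x|\le R_2$ one has the geometric inclusion $B_{R_1}(y)\subset B_{R_2+R_1}(x)$, so $f_E(y,\cdot)$ is $\sigma_{B_{R_2+R_1}(x)}$-measurable for every $y\in\overline{B_{R_2}(x)}$.

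Finally I would assemble the two pieces, being careful that $\omega$ enters the representation both through $\mu_\omega$ and through the integrand $f_E(\cdot,\omega)$. The clean way is to introduce
$$\Phi:\omega\longmapsto\Big(\mu_\omega,\ \big(A(y,\omega),b(y,\omega)\big)_{y\in\overline{B_{R_2+R_1}(x)}}\Big),$$
which is $\sigma_{B_{R_2+R_1}(x)}$-measurable by the previous paragraph, and to observe that $\tilde{R}_{t,R_2}f_E(x,\omega)$ is a fixed measurable functional of $\Phi(\omega)$: from a path measure $\mu$ concentrated on $\overline{B_{R_2}(x)}$ and the environment restricted to $\overline{B_{R_2+R_1}(x)}$ one forms $\int {\bf{1}}_{\tau_{-\gamma(t)}(E)}\,\mu(d\gamma)$, where for $\gamma(t)\in\overline{B_{R_2}(x)}$ the event $\tau_{-\gamma(t)}(E)$ lies in $\sigma_{B_{R_1}(\gamma(t))}\subset\sigma_{B_{R_2+R_1}(x)}$ and so its indicator is a measurable function of $\gamma(t)$ and of the restricted environment. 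Composing, $\tilde{R}_{t,R_2}f_E(x,\cdot)$ is $\sigma_{B_{R_2+R_1}(x)}$-measurable, which is the claim. The main obstacle is making this last assembly rigorous — namely that the representation is a genuine measurable functional of $\Phi(\omega)$, rather than merely $\mathcal{F}$-measurable — which I would handle by a monotone class argument: the bi-measurability of $f$ on $\mathbb{R}^d\times\Omega$ and the joint measurability of $(\gamma,\omega)\mapsto f_E(\gamma(t),\omega)$ reduce the statement to $E$ ranging over a generating family of $\sigma_{B_{R_1}}$ (e.g. sets of the form (\ref{u_sigma_translate_1})–(\ref{u_sigma_translate_2})), for which it is immediate; the inclusion $B_{R_1}(y)\subset B_{R_2+R_1}(x)$ for $|y-x|\le R_2$ then closes the argument.
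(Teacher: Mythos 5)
Your proof is correct and follows essentially the same route as the paper's: both rest on (i) the fact that the localized solution $\tilde{R}_{t,R_2}f_E(x,\cdot)$ depends only on the coefficients and on the data $f_E(y,\cdot)$ for $y\in B_{R_2}(x)$, and (ii) Proposition \ref{u_sigma_translate}, which gives $\sigma(f_E(y,\omega))\subset\sigma_{B_{R_1}+y}\subset\sigma_{B_{R_2+R_1}(x)}$. The only difference is one of detail: the paper takes step (i) as immediate from the definition (\ref{u_local}), whereas you justify it via the probabilistic representation through the stopped process and the localization of the martingale problem.
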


\begin{proof}  Fix $E\in\mathcal{F}$ and $R_1>0$ satisfying $E\in\sigma_{B_{R_1}}$, $x\in\mathbb{R}^d$, $t\geq 1$ and $R_2>0$.  In view of the definition (\ref{u_local}), \begin{equation}\label{u_localize_1}\sigma\left(\tilde{R}_{t,R_2}f_E(x,\omega)\right)\subset\sigma\left(A(y,\omega), b(y,\omega), f_E(y,\omega)\;|\;y\in B_{R_2}(x)\right)\end{equation}  And, in view of (\ref{o_translate}) and Proposition \ref{u_sigma_translate}, since $E\in \sigma_{B_{R_1}}$, for every $y\in \mathbb{R}^d$, \begin{equation}\label{u_localize_2}\sigma(f_E(y,\omega))=\left\{\;\tau_{-y}(E), \Omega\setminus\tau_{-y}\left(E\right)\;\right\}\subset \tau_{-y}(\sigma_{B_{R_1}})=\sigma_{B_{R_1}+y}.\end{equation}  Therefore, in view of (\ref{u_localize_1}) and (\ref{u_localize_2}), $$\sigma\left(\tilde{R}_{t,R_2}f_E(x,\omega)\right)\subset\sigma\left(\bigcup_{y\in B_{R_2}(x)}\sigma_{B_{R_1}+y}\right)=\sigma_{B_{R_2}(x)+B_{R_1}}=\sigma_{B_{R_2+R_1}(x)},$$ which, since $R_1$, $E\in \sigma_{B_{R_1}}$, $x$, $t$, and $R_2$ were arbitrary, completes the argument.  \end{proof}

Recall that in Proposition \ref{o_main}, with high probability, we obtained an effective comparison between the kernels $$R_{n+1}\;\;\textrm{and}\;\;\overline{R}_n^{\ell_n^2-6}R_n^6,$$ where, in view of Theorem \ref{effectivediffusivity}, the expectation is that the presence of the heat kernel will result  significant averaging for appropriate initial data.  The following proposition quantifies the effect of this averaging.

\begin{prop}\label{u_var}  Assume (\ref{steady}) and (\ref{constants}).  Suppose that, for $R_1>0$, $E\in\mathcal{F}$ satisfies $E\in\sigma_{B_{R_1}}$.  For each $n\geq 0$, $1\leq k<\ell_{n}^2$ and $t\geq 0$ there exists $C=C(t,R_1)>0$ independent of $E$, $n$ and $k$, and there exists $\zeta>0$ independent of $R_1$, $E$, $n$, $k$ and $t$, such that $$\mathbb{P}\left(\sup_{x\in B_{4\sqrt{k}\tilde{D}_{n+1}}}\left|\left(\overline{R}_n\right)^{k\ell_n^2-6}\left(\tilde{R}_n\right)^6R_{1+t}f_E(x,\omega)-\pi_n(R_tf_E)\right|>\frac{C}{\tilde{\kappa}_n}\right)\leq Ck^{-(1+\zeta)}L_n^{-\zeta}.$$\end{prop}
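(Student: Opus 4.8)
Write $g(x,\omega)=(\tilde{R}_n)^6R_{1+t}f_E(x,\omega)$, so that $0\leq g\leq 1$. Since $f_E$ is stationary and $R_1R_t=R_{1+t}$, the definition (\ref{o_pin}) gives $\pi_n(R_tf_E)=\mathbb{E}(g(0,\omega))$; moreover, by Proposition \ref{o_sol}, Proposition \ref{o_localsol} and Fubini's theorem the map $z\mapsto\mathbb{E}(g(z,\omega))$ is constant and equal to $\pi_n(R_tf_E)$, so, since $(\overline{R}_n)^{k\ell_n^2-6}$ is convolution against a probability density, $\mathbb{E}\big((\overline{R}_n)^{k\ell_n^2-6}g(x,\omega)\big)=\pi_n(R_tf_E)$ for every $x\in\mathbb{R}^d$. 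The goal is to control the fluctuations of $(\overline{R}_n)^{k\ell_n^2-6}g$ about this mean.

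The first step is to remove the non-local dependence of $g$ on the environment. Choose $\rho=\rho(n,t)>0$ so large that, by Proposition \ref{u_martingale} applied with $1+t$ in place of $t$ and $\rho$ in place of $R$, one has $\norm{R_{1+t}f_E-\tilde{R}_{1+t,\rho}f_E}_{L^\infty(\mathbb{R}^d)}\leq\tilde{\kappa}_n^{-1}$; this requires only $\rho\geq C(1+t)+C\sqrt{(1+t)\log\tilde{\kappa}_n}$, and since $\log\tilde{\kappa}_n=2c_0(\log\log L_n)^2$ we may in addition take $\rho\leq\tilde{D}_n$ for all $n$ large (depending on $t$). Set $g_\rho=(\tilde{R}_n)^6\tilde{R}_{1+t,\rho}f_E$. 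Because $\tilde{R}_n$ and $\overline{R}_n$ are $L^\infty$-contractions, $\norm{(\overline{R}_n)^{k\ell_n^2-6}(g-g_\rho)}_{L^\infty}\leq\tilde{\kappa}_n^{-1}$ and $\abs{\mathbb{E}(g_\rho(0,\omega))-\pi_n(R_tf_E)}\leq\tilde{\kappa}_n^{-1}$. Hence, writing $\hat{g}_\rho=g_\rho-\mathbb{E}(g_\rho(0,\omega))$ — a centred, stationary and bounded field — it suffices to show, for a suitable constant $C=C(t,R_1)$,
\[
\mathbb{P}\Big(\sup_{x\in B_{4\sqrt{k}\tilde{D}_{n+1}}}\abs{(\overline{R}_n)^{k\ell_n^2-6}\hat{g}_\rho(x,\omega)}>C\tilde{\kappa}_n^{-1}\Big)\leq Ck^{-(1+\zeta)}L_n^{-\zeta},
\]
the finitely many $n$ for which the asymptotic estimates below do not yet apply being absorbed into $C$ by bounding the probability by $1$.

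The decisive feature of $\hat{g}_\rho$ is its finite range of dependence. By Proposition \ref{u_localize} (with $1+t$ in place of $t$ and $R_2=\rho$), $\sigma(\tilde{R}_{1+t,\rho}f_E(y,\omega))\subset\sigma_{B_{\rho+R_1}(y)}$, and each application of $\tilde{R}_n$ enlarges the radius of dependence by $6\tilde{D}_n$, since $\tilde{R}_nh(y,\omega)$ is determined by $h$ and the coefficients on $\overline{B}_{6\tilde{D}_n}(y)$; thus $\sigma(\hat{g}_\rho(z,\omega))\subset\sigma_{B_{\rho_n}(z)}$ with $\rho_n=36\tilde{D}_n+\rho+R_1\leq 37\tilde{D}_n$ for $n$ large, and (\ref{finitedep}) makes $\hat{g}_\rho(z,\omega)$ and $\hat{g}_\rho(z',\omega)$ independent whenever $\abs{z-z'}>2\rho_n+R$. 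Writing $(\overline{R}_n)^{k\ell_n^2-6}\hat{g}_\rho(x,\omega)=\int_{\mathbb{R}^d}G(x-z)\hat{g}_\rho(z,\omega)\,dz$ for the Gaussian density $G$ of variance $\alpha_n(k\ell_n^2-6)L_n^2$, Theorem \ref{effectivediffusivity} gives $\norm{G}_{L^\infty}\leq C(kL_{n+1}^2)^{-d/2}$ and $\norm{DG}_{L^1}\leq C(\sqrt{k}\,L_{n+1})^{-1}$. Since $\mathbb{E}(\hat{g}_\rho(z,\omega)\hat{g}_\rho(z',\omega))$ vanishes for $\abs{z-z'}>2\rho_n+R$ and is bounded in absolute value,
\[
\mathbb{E}\left(\abs{(\overline{R}_n)^{k\ell_n^2-6}\hat{g}_\rho(x,\omega)}^2\right)\leq C\norm{G}_{L^\infty}(2\rho_n+R)^d\leq C\left(\frac{\tilde{D}_n}{\sqrt{k}\,L_{n+1}}\right)^d=Ck^{-d/2}\left(\frac{\tilde{\kappa}_n}{\ell_n}\right)^d\leq Ck^{-d/2}L_n^{-ad/2}
\]
for $n$ large, using $\tilde{D}_n=L_n\tilde{\kappa}_n$, $L_{n+1}=\ell_nL_n$, $\ell_n\geq L_n^a/2$ and the fact that $\tilde{\kappa}_n$ is dominated by every positive power of $L_n$. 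Finally, the bound on $\norm{DG}_{L^1}$ shows that $x\mapsto(\overline{R}_n)^{k\ell_n^2-6}\hat{g}_\rho(x,\omega)$ is Lipschitz with constant $C(\sqrt{k}\,L_{n+1})^{-1}$, uniformly in $\omega$; covering $B_{4\sqrt{k}\tilde{D}_{n+1}}$ by $N\leq(C\tilde{\kappa}_{n+1}\tilde{\kappa}_n)^d$ balls of radius $c\sqrt{k}\,L_{n+1}\tilde{\kappa}_n^{-1}$ — a number independent of $k$ — and combining Chebyshev's inequality at the centres with a union bound gives
\[
\mathbb{P}\Big(\sup_{x\in B_{4\sqrt{k}\tilde{D}_{n+1}}}\abs{(\overline{R}_n)^{k\ell_n^2-6}\hat{g}_\rho(x,\omega)}>C\tilde{\kappa}_n^{-1}\Big)\leq N\tilde{\kappa}_n^2\cdot Ck^{-d/2}L_n^{-ad/2}\leq Ck^{-d/2}L_n^{-ad/4}
\]
for $n$ large, since $N\tilde{\kappa}_n^2$ is again dominated by every positive power of $L_n$. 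Taking $\zeta=\min\!\big(\tfrac{d}{2}-1,\tfrac{ad}{4}\big)>0$, which is positive because $d\geq 3$, completes the argument.

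The main obstacle is the localization step: one must turn the manifestly non-local field $(\tilde{R}_n)^6R_{1+t}f_E$ into a genuinely finite-range-dependent one whose range $\rho_n\sim\tilde{D}_n$ is still negligible against the averaging length $\sqrt{k}\,L_{n+1}$, and achieve this with an error below the slowly decaying threshold $\tilde{\kappa}_n^{-1}$; once this is done the block-independence variance bound and the discretization are routine, the hypothesis $d\geq 3$ being needed only to guarantee $\tfrac d2-1>0$.
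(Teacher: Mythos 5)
Your proof is correct and follows essentially the same route as the paper's: localize the initial kernel via Proposition \ref{u_martingale}, use Proposition \ref{u_localize} and (\ref{finitedep}) to get finite-range dependence at scale $\tilde{D}_n$, bound the variance of the Gaussian average by $\norm{G}_{L^\infty}(2\rho_n+R)^d\sim k^{-d/2}(\tilde{\kappa}_n/\ell_n)^d$, and upgrade Chebyshev at grid points to the full supremum using the Lipschitz bound $C(\sqrt{k}L_{n+1})^{-1}$. The only (harmless) deviations are cosmetic: you take a net of $k$-independent cardinality with spacing $c\sqrt{k}L_{n+1}\tilde\kappa_n^{-1}$ rather than the paper's $(\sqrt{k}L_{n+1})^\gamma$-grid with $\gamma\in(\tfrac{1}{1+a},1)$, and you track the dependence radius of $(\tilde{R}_n)^6$ as $36\tilde{D}_n$ rather than $6\tilde{D}_n$, which is if anything the more careful accounting.
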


\begin{proof}  Fix $E\in\mathcal{F}$ and $R_1>0$ satisfying $E\in\sigma_{B_{R_1}}$, $t\geq 0$, $n\geq 0$ and $1\leq k<\ell_n^2$.  We define \begin{equation}\label{u_var_0} R_2=6\tilde{D}_n,\end{equation} and observe that, in view of Proposition \ref{u_martingale}, for every $x\in\mathbb{R}^d$ and $\omega\in\Omega$, for $C_1>0$ independent of $n$, $k$, $x$, $t$ and $\omega$, \begin{equation}\label{u_var_00} \left|R_{1+t}f_E(x\,\omega)-\tilde{R}_{1+t,R_2}f_E(x,\omega)\right|\leq \norm{f_E}_{L^\infty(\mathbb{R}^d\times\Omega)}e^{-\frac{(6\tilde{D}_n-C_1(1+t))_+^2}{C_1(1+t)}}\leq e^{-\frac{(6\tilde{D}_n-C_1(1+t))_+^2}{C_1(1+t)}}.\end{equation}  In order to obtain better localization properties, we therefore consider the quantity \begin{equation}\label{u_var_1}\left(\overline{R}_n\right)^{k\ell_n^2-6}\left(\tilde{R}_n\right)^6\tilde{R}_{1+t,R_2}f_E(0,\omega)=\int_{\mathbb{R}^d}p_{n,k}(y)\left(\tilde{R}_n\right)^6\tilde{R}_{t+1,R_2}f_E(y,\omega)\;dy,\end{equation} where, for each $y\in\mathbb{R}^d$, $$p_{n,k}(y)=(4\pi \alpha_n(kL_{n+1}^2-6L_n^2))^{-d/2}e^{-\abs{y}^2/4\alpha_n (kL_{n+1}^2-6L_n^2)}.$$  Here, we observe that Theorem \ref{effectivediffusivity} implies, for $C>0$ independent of $n$, \begin{equation}\label{u_var_2}\norm{p_{n,k}(y)}_{L^\infty(\mathbb{R}^d)}\leq Ck^{-d/2}L_{n+1}^{-d}.\end{equation}

We now define, for each $n\geq 0$, \begin{equation}\label{u_var_000} \tilde{\pi}_n(R_tf_E)=\mathbb{E}\left(\left(\overline{R}_n\right)^{k\ell_n^2-6}\left(\tilde{R}_n\right)^6\tilde{R}_{1+t,R_2}f_E(0,\omega)\right)=\mathbb{E}\left(\left(\tilde{R}_n\right)^6\tilde{R}_{1+t,R_2}f_E(0,\omega)\right),\end{equation} and observe that, in view of (\ref{u_var_00}), for each $n\geq 0$, \begin{equation}\label{u_var_0000} \abs{\pi_n(R_tf_E)-\tilde{\pi}_n(R_tf_E)}\leq e^{-\frac{(6\tilde{D}_n-C(1+t))_+^2}{C(1+t)}}.\end{equation}  Furthermore, using the stationary (\ref{stationary}), since $f_E$ satisfies (\ref{o_stationary}), we have, for each $x\in\mathbb{R}^d$, \begin{equation}\label{u_var_3} \tilde{\pi}_n(R_tf_E)=\mathbb{E}\left(\left(\overline{R}_n\right)^{k\ell_n^2-6}\left(\tilde{R}_n\right)^6\tilde{R}_{1+t,R_2}f_E(x,\omega)\right)=\mathbb{E}\left(\left(\tilde{R}_n\right)^6\tilde{R}_{1+t, R_2}f_E(x,\omega)\right).\end{equation}

Observe that, definition (\ref{o_localized}) and the choice of $R_2=6\tilde{D}_n$ in (\ref{u_var_0}) imply that, for each $x\in\mathbb{R}^d$ and $\omega\in\Omega$, $$\left(\tilde{R}_n\right)^6\tilde{R}_{t+1,R_2}f_E(x,\omega)=\tilde{R}_{6\tilde{D}_n+1+t,R_2}f_E(x,\omega),$$ and, using Proposition \ref{o_localize}, for each $x\in\mathbb{R}^d$, \begin{equation}\label{u_var_00000} \sigma\left(\left(\tilde{R}_n\right)^6\tilde{R}_{t+1,R_2}f_E(x,\omega)\right)\subset\sigma_{B_{R_2+R_1}(x)}=\sigma_{B_{6\tilde{D}_n+R_1}(x)}.\end{equation}  Therefore, for $R>0$ as in (\ref{finitedep}), whenever $x,y\in\mathbb{R}^d$ satisfy $\abs{x-y}\geq 12\tilde{D}_n+2R_1+R,$ the random variables \begin{equation}\label{u_var_000000} \left(\tilde{R}_n\right)^6\tilde{R}_{t+1,R_2}f_E(x,\omega)\;\;\textrm{and}\;\;\left(\tilde{R}_n\right)^6\tilde{R}_{t+1,R_2}f_E(y,\omega)\;\;\textrm{are independent.}\end{equation}

We now write $\tilde{\pi}_n=\tilde{\pi}_n(R_tf)$ and compute the variance \begin{multline}\label{u_var_4} \mathbb{E}\left(\left(\left(\overline{R}_n\right)^{k\ell_n^2-6}\left(\tilde{R}_n\right)^6\tilde{R}_{t+1,R_2}f_E(0,\omega)-\tilde{\pi}_n\right)^2\right)= \\ \mathbb{E}\left(\int_{\mathbb{R}^d}\int_{\mathbb{R}^d}p_{n,k}(y)p_{n,k}(z)\left(\left(\tilde{R}_n\right)^6\tilde{R}_{t+1,R_2}f_E(y,\omega)-\tilde{\pi}_n\right)\left(\left(\tilde{R}_n\right)^6\tilde{R}_{t+1,R_2}f_E(z,\omega)-\tilde{\pi}_n\right)\;dydz\right).\end{multline}  Since there exists $C=C(R_1)>0$ such that, for all $n\geq 0$, $$C\tilde{D}_n\geq 12\tilde{D}_n+2R_1+R,$$ and since, for each $x\in\mathbb{R}^d$ and $\omega\in\Omega$, $$-1\leq \left(\tilde{R}_n\right)^6\tilde{R}_{t+1,R_2}f_E(y,\omega)-\tilde{\pi}_n\leq 1,$$  we have, in view of (\ref{u_var_000000}), for $C=C(R_1)>0$ independent of $n$, $k$ and $t$, $$\mathbb{E}\left(\left(\left(\overline{R}_n\right)^{k\ell_n^2-6}\left(\tilde{R}_n\right)^6\tilde{R}_{t+1,R_2}f_E(0,\omega)-\tilde{\pi}_n\right)^2\right) \leq \int_{\mathbb{R}^d}\int_{C\tilde{D}_n}p_{n,k}(y)p_{n,k}(z)\;dydz.$$  Therefore, using (\ref{u_var_2}), for $C=C(R_1)>0$ independent of $n$, $k$ and $t$, \begin{equation}\label{u_var_5}\mathbb{E}\left(\left(\left(\overline{R}_n\right)^{k\ell_n^2-6}\left(\tilde{R}_n\right)^6f_E(0,\omega)-\tilde{\pi}_n\right)^2\right)\leq Ck^{-d/2}L_{n+1}^{-d}\tilde{D}_n^d\leq Ck^{-d/2}\tilde{\kappa}_{n}^d\ell_n^{-d}, \end{equation} and, together with (\ref{u_var_5}), Chebyshev's inequality implies that, for $C=C(R_1)>0$ independent of $n$, $k$ and $t$, \begin{equation}\label{u_var_6} \mathbb{P}\left(\left|\left(\overline{R}_n\right)^{k\ell_n^2-6}\left(\tilde{R}_n\right)^6\tilde{R}_{t+1,R_2}f_E(0,\omega)-\tilde{\pi}_n\right|\geq 1/\tilde{\kappa}_n\right)\leq Ck^{-d/2}\tilde{\kappa}_{n}^{d+2}\ell_n^{-d}.\end{equation}  We will now extend a version of this estimate to the whole of $B_{4\sqrt{k}\tilde{D}_{n+1}}$.

Fix $0<\gamma<1$ satisfying, in view of (\ref{dimension}) and (\ref{Holderexponent}), \begin{equation}\label{u_var_7} \frac{1}{1+a}<\gamma<1,\;\;\textrm{which implies}\;\;\frac{2}{d}<\gamma<1.\end{equation}  Since $f_E$ satisfies (\ref{o_stationary}), the stationarity (\ref{stationary}) and (\ref{u_var_3}) imply that, for $C=C(R_1)>0$ independent of $n$, $k$, and $t$, \begin{multline*}\mathbb{P}\left(\sup_{x\in \left(\sqrt{k}L_{n+1}\right)^\gamma\mathbb{Z}^d\cap B_{4\sqrt{k}\tilde{D}_{n+1}}}\left|\left(\overline{R}_n\right)^{k\ell_n^2-6}\left(\tilde{R}_n\right)^6\tilde{R}_{t+1,R_2}f_E(x,\omega)-\tilde{\pi}_n\right|\geq 1/\tilde{\kappa}_n\right)  \\ \leq C\left(\frac{4\sqrt{k}\tilde{D}_{n+1}}{\left(\sqrt{k}L_{n+1}\right)^\gamma}\right)^dk^{-d/2}\tilde{\kappa}_{n}^{d+2}\ell_n^{-d}\leq C\tilde{\kappa}_{n+1}^d\tilde{\kappa}_n^{d+2}k^{-\gamma d/2}L_n^{(1-(1+a)\gamma)d},\end{multline*} where we observe that (\ref{u_var_7}) implies that $-\gamma d/2<-1$ and $(1-(1+a)\gamma)d<0$.  Therefore, in view of (\ref{L}) and (\ref{kappa}), there exists $\zeta>0$ and $C=C(R_1)>0$ independent of $n$, $k$, and $t$ such that  \begin{equation}\label{u_var_8} \mathbb{P}\left(\sup_{x\in \left(\sqrt{k}L_{n+1}\right)^\gamma\mathbb{Z}^d\cap B_{4\sqrt{k}\tilde{D}_{n+1}}}\left|\left(\overline{R}_n\right)^{k\ell_n^2-6}\left(\tilde{R}_n\right)^6\tilde{R}_{t+1,R_2}f_E(x,\omega)-\tilde{\pi}_n\right|\geq 1/\tilde{\kappa}_n\right)\leq Ck^{-(1+\zeta)}L_n^{-\zeta}.\end{equation}

Using Theorem \ref{effectivediffusivity}, Proposition \ref{o_sol} and (\ref{u_var_1}), for each $\omega\in\Omega$ and $x\in\mathbb{R}^d$, for $C>0$ independent of $n$, $k$, $t$ and $R_1$, \begin{equation}\label{u_var_9} \norm{D_x\left(\overline{R}_n\right)^{k\ell_n^2-6}\left(\tilde{R}_n\right)^6\tilde{R}_{t+1,R_2}f_E(x,\omega)}_{L^\infty(\mathbb{R}^d)} \leq \int_{\mathbb{R}^d}D_yp_{n,k}(y)\;dy\leq Ck^{-1/2}L_{n+1}^{-1}.\end{equation}  Since, for each $x\in B_{4\sqrt{k}\tilde{D}_{n+1}}$ there exists $y\in\left(\sqrt{k}L_{n+1}\right)^\gamma\mathbb{Z}^d\cap B_{4\sqrt{k}\tilde{D}_{n+1}}$ satisfying $\abs{x-y}\leq C\left(\sqrt{k}L_{n+1}\right)^\gamma$ for $C>0$ independent of $n$, we conclude that, in view of (\ref{u_var_9}), \begin{multline}\label{u_var_10} \sup_{x\in B_{4\sqrt{k}\tilde{D}_{n+1}}}\left|\left(\overline{R}_n\right)^{k\ell_n^2-6}\left(\tilde{R}_n\right)^6\tilde{R}_{t+1,R_2}f_E(x,\omega)-\tilde{\pi}_n\right| \\ \leq \sup_{x\in \left(\sqrt{k}L_{n+1}\right)^\gamma\mathbb{Z}^d\cap B_{4\sqrt{k}\tilde{D}_{n+1}}}\left|\left(\overline{R}_n\right)^{k\ell_n^2-6}\left(\tilde{R}_n\right)^6\tilde{R}_{t+1,R_2}f_E(x,\omega)-\tilde{\pi}_n\right|+C\left(\sqrt{k}L_{n+1}\right)^{\gamma-1}.\end{multline}  Because (\ref{L}) and (\ref{kappa}) imply that there exists $C>0$ independent of $n$ such that, for all $n\geq0$ and $k\geq 1$, we have \begin{equation}\label{u_var_11}\left(\sqrt{k}L_{n+1}\right)^{\gamma-1}\leq C/\tilde{\kappa}_n,\end{equation} for $C=C(R_1)>0$ independent of $n$, $k$, and $t$, using (\ref{u_var_8}), (\ref{u_var_10}) and (\ref{u_var_11}), \begin{equation}\label{u_var_12} \mathbb{P}\left(\sup_{x\in B_{4\sqrt{k}\tilde{D}_{n+1}}}\abs{\left(\overline{R}_n\right)^{k\ell_n^2-6}\left(\tilde{R}_n\right)^6\tilde{R}_{t+1,R_2}f_E(x,\omega)-\tilde{\pi}_n}>\frac{C}{\kappa_n}\right)\leq Ck^{-(1+\zeta)}L_n^{-\zeta}.\end{equation}  Finally, since (\ref{kappa}) and (\ref{D}) imply that there exists $C=C(t)>0$ such that, for $C_1>0$ as in (\ref{u_var_00}), for all $n\geq 0$,  $$e^{-\frac{(6\tilde{D}_n-C_1(1+t))_+^2}{C_1(1+t)}}\leq C/\tilde{\kappa}_n,$$ we conclude in view of (\ref{u_var_00}), (\ref{u_var_0000}) and (\ref{u_var_12}) that there exists $C=C(R_1,t)>0$ independent of $n$ and $k$ such that  $$\mathbb{P}\left(\sup_{x\in B_{4\sqrt{k}\tilde{D}_{n+1}}}\abs{\left(\overline{R}_n\right)^{k\ell_n^2-6}\left(\tilde{R}_n\right)^6R_{t+1}f_E(x,\omega)-\pi_n(R_tf_E)}>\frac{C}{\kappa_n}\right)\leq Ck^{-(1+\zeta)}L_n^{-\zeta},$$ which, since $E$, $R_1$, $n$, $k$ and $t$ were arbitrary, completes the argument.  \end{proof}

The following proposition is essentially a restatement of Proposition \ref{o_main} best suited to our current circumstances, where we recall the definition of the subsets $\left\{A_n\right\}_{n=0}^\infty$ in (\ref{o_subset}).

\begin{prop}\label{u_main}  Assume (\ref{steady}) and (\ref{constants}).  For every $E\in\mathcal{F}$, $n\geq 0$, $1\leq k<\ell_{n+1}^2$, $t\geq 0$ and $\omega\in A_n$, for $C>0$ independent of $E$, $n$, $k$, $t$ and $\omega$, $$\sup_{x\in B_{4\sqrt{k}\tilde{D}_{n+1}}}\abs{\left(R_{n+1}\right)^kR_{1+t}f_E(x,\omega)-\left(\overline{R}_n\right)^{k\ell_n^2-6}\left(\tilde{R}_n\right)^6R_{1+t}f_E(x,\omega)}\leq CL_n^{\beta-7(\delta-5a)}.$$\end{prop}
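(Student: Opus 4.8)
The plan is to obtain Proposition~\ref{u_main} as an essentially immediate consequence of Proposition~\ref{o_main}, applied in the environment $\omega$ to the initial datum $R_{1+t}f_E(\cdot,\omega)$ in place of a generic H\"older function, once the $\C^{0,\beta}$ norm of that datum has been controlled uniformly in $\omega$, $t$ and $E$.

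First I would record that, for every $E\in\mathcal{F}$, the function $f_E$ defined in (\ref{o_translate}) belongs to $L^\infty(\mathbb{R}^d\times\Omega)$ with $\norm{f_E}_{L^\infty(\mathbb{R}^d\times\Omega)}\leq 1$, since ${\bf{1}}_E$ takes values in $\{0,1\}$; hence, by Proposition~\ref{o_sol}, $R_{1+t}f_E(x,\omega)$ is well defined for every $t\geq 0$, $x\in\mathbb{R}^d$ and $\omega\in\Omega$. Because $1+t\geq 1$, Proposition~\ref{o_weak} applied to $g=f_E(\cdot,\omega)\in L^\infty(\mathbb{R}^d)$ then yields, for a constant $C>0$ independent of $E$, $t$ and $\omega$,
$$R_{1+t}f_E(\cdot,\omega)\in\C^{0,\beta}(\mathbb{R}^d)\quad\textrm{with}\quad\norm{R_{1+t}f_E(\cdot,\omega)}_{\C^{0,\beta}(\mathbb{R}^d)}\leq C.$$

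Next, I would fix $n\geq 0$, $\omega\in A_n$, $1\leq k<\ell_{n+1}^2$ and $t\geq 0$, and apply Proposition~\ref{o_main}, in the environment $\omega$, to the H\"older function $f:=R_{1+t}f_E(\cdot,\omega)$. By the semigroup property built into the definitions (\ref{o_op}), (\ref{o_hop}) and (\ref{o_localized}) of the kernels $R_{n+1}$, $\overline{R}_n$ and $\tilde{R}_n$ — namely that applying $(R_{n+1})^k$, respectively $(\overline{R}_n)^{k\ell_n^2-6}(\tilde{R}_n)^6$, to $R_{1+t}f_E(\cdot,\omega)$ reproduces $(R_{n+1})^kR_{1+t}f_E(\cdot,\omega)$, respectively $(\overline{R}_n)^{k\ell_n^2-6}(\tilde{R}_n)^6R_{1+t}f_E(\cdot,\omega)$ — the conclusion of Proposition~\ref{o_main} reads
$$\sup_{x\in B_{4\sqrt{k}\tilde{D}_{n+1}}}\abs{(R_{n+1})^kR_{1+t}f_E(x,\omega)-(\overline{R}_n)^{k\ell_n^2-6}(\tilde{R}_n)^6R_{1+t}f_E(x,\omega)}\leq CL_n^{\beta-7(\delta-5a)}\norm{R_{1+t}f_E(\cdot,\omega)}_{\C^{0,\beta}(\mathbb{R}^d)}.$$
Inserting the bound from the previous paragraph gives the claimed estimate; and since the constant from Proposition~\ref{o_main} is independent of $n$, of $\omega\in A_n$ and of $k$, while the constant from Proposition~\ref{o_weak} is independent of $\omega$ and $t$, the resulting $C>0$ is independent of $E$, $n$, $k$, $t$ and $\omega$, as required. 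There is no substantive obstacle here: as its statement indicates, the proposition merely repackages Proposition~\ref{o_main}, and the only point deserving a line of care is the verification — supplied by Proposition~\ref{o_weak} — that $R_{1+t}f_E(\cdot,\omega)$ is an admissible ($\C^{0,\beta}$) initial datum with norm bounded uniformly in all the parameters.
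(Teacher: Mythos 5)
Your proposal is correct and follows exactly the paper's own argument: bound $\norm{R_{1+t}f_E(\cdot,\omega)}_{\C^{0,\beta}(\mathbb{R}^d)}$ uniformly via Proposition \ref{o_weak} (using $1+t\geq 1$ and $\norm{f_E}_{L^\infty}\leq 1$), then apply Proposition \ref{o_main} in the fixed environment $\omega\in A_n$ to that H\"older datum. Nothing further is needed.
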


\begin{proof}  Fix $E\in\mathcal{F}$, $n\geq 0$, $1\leq k<\ell_{n+1}^2$ and $\omega\in A_n$.  In view of Proposition \ref{o_weak}, there exists $C>0$ independent of $E$, $n$, $k$, $t$ and $\omega$ such that $$\norm{R_{1+t}f_E}_{C^{0,\beta}(\mathbb{R}^d)}\leq C \norm{f_E}_{L^\infty(\mathbb{R}^d)}\leq C.$$  Therefore, since $\omega\in A_n$, Proposition \ref{o_main} implies that, for $C>0$ independent of $E$, $n$, $k$ and $\omega$, \begin{multline*}\sup_{x\in B_{4\sqrt{k}\tilde{D}_{n+1}}}\abs{\left(R_{n+1}\right)^kR_{1+t}f_E(x,\omega)-\left(\overline{R}_n\right)^{k\ell_n^2-6}\left(\tilde{R}_n\right)^6R_{1+t}f_E(x,\omega)} \\ \leq CL_n^{\beta-7(\delta-5a)}\norm{R_{1+t}f_E}_{C^{0,\beta}(\mathbb{R}^d)}\leq CL_n^{\beta-7(\delta-5a)},\end{multline*} which, since $E$, $n$, $k$, $t$ and $\omega$ were arbitrary, completes the argument.  \end{proof}

Observe that the convergence obtain in Proposition \ref{u_main} occurs along the discrete sequence of time steps $kL_n^2$ on $4\sqrt{k}\tilde{D}_n$.  We now upgrade this convergence along the full limit, as $t\rightarrow\infty$, using Control \ref{localization}.  The cost is that the convergence now occurs on a marginally smaller portion of space.

\begin{prop}\label{u_upmain}  Assume (\ref{steady}) and (\ref{constants}).  Suppose, for some $R_1>0$, $E\in\mathcal{F}$ satisfies $E\in \sigma_{B_{R_1}}.$  For each $n\geq 0$, $1\leq k<\ell_{n+1}^2$ and $t\geq 0$, for $C=C(R_1,t)>0$, $$\mathbb{P}\left(\sup_{kL_{n+1}^2\leq s\leq (k+1)L_{n+1}^2}\sup_{x\in B_{\sqrt{k}\tilde{D}_{n+1}}}\left| R_sR_{1+t}(x,\omega)-\pi_n(R_tf_E)\right|>\frac{C}{\tilde{\kappa}_n}\right) \leq Ck^{-(1+\zeta)}L_n^{-\zeta}.$$\end{prop}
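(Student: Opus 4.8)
The plan is to deduce the statement from the discrete-time comparison already established, namely Proposition \ref{u_main} and Proposition \ref{u_var}, and then to propagate the resulting near-constancy across the full time window $[kL_{n+1}^2,(k+1)L_{n+1}^2]$ by means of the localization estimate Control \ref{localization} at scale $n+1$. Fix $R_1>0$ and $E\in\sigma_{B_{R_1}}$, together with $n\geq 0$, $1\leq k<\ell_{n+1}^2$ and $t\geq 0$, and write $a_n=\pi_n(R_tf_E)$, noting $0\le a_n\le 1$. For $\omega\in A_n$, Proposition \ref{o_weak} gives $\norm{R_{1+t}f_E}_{C^{0,\beta}(\mathbb{R}^d)}\le C$, so Proposition \ref{u_main} bounds $\sup_{x\in B_{4\sqrt{k}\tilde{D}_{n+1}}}\abs{(R_{n+1})^kR_{1+t}f_E(x,\omega)-(\overline{R}_n)^{k\ell_n^2-6}(\tilde{R}_n)^6R_{1+t}f_E(x,\omega)}$ by $CL_n^{\beta-7(\delta-5a)}$, while Proposition \ref{u_var} bounds, outside an event of probability at most $Ck^{-(1+\zeta)}L_n^{-\zeta}$, the quantity $\sup_{x\in B_{4\sqrt{k}\tilde{D}_{n+1}}}\abs{(\overline{R}_n)^{k\ell_n^2-6}(\tilde{R}_n)^6R_{1+t}f_E(x,\omega)-a_n}$ by $C/\tilde{\kappa}_n$. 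Using that $M_0$ is large and $1\le k<\ell_{n+1}^2$, Proposition \ref{o_probability} gives $\mathbb{P}(\Omega\setminus A_n)\le Ck^{-(1+\zeta)}L_n^{-\zeta}$, and since $\tilde{\kappa}_n$ is eventually dominated by every positive power of $L_n$ one has $L_n^{\beta-7(\delta-5a)}\le C/\tilde{\kappa}_n$ uniformly in $n$. Combining these, there is an event $G$ with $\mathbb{P}(\Omega\setminus G)\le Ck^{-(1+\zeta)}L_n^{-\zeta}$ such that, for every $\omega\in G$, we have $\omega\in A_n$ and, since $(R_{n+1})^kR_{1+t}f_E=R_{kL_{n+1}^2}R_{1+t}f_E$,
$$\sup_{x\in B_{4\sqrt{k}\tilde{D}_{n+1}}}\abs{R_{kL_{n+1}^2}R_{1+t}f_E(x,\omega)-a_n}\le \frac{C}{\tilde{\kappa}_n}.$$

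I would then propagate this estimate forward in time. Fix $\omega\in G$, a point $x\in B_{\sqrt{k}\tilde{D}_{n+1}}$ and a time $s\in[kL_{n+1}^2,(k+1)L_{n+1}^2]$, and write $s=kL_{n+1}^2+s'$ with $0\le s'\le L_{n+1}^2$. With $g=R_{kL_{n+1}^2}R_{1+t}f_E(\cdot,\omega)$, the semigroup property gives $R_sR_{1+t}f_E(x,\omega)=R_{s'}g(x,\omega)=P_{x,\omega}\big(g(X_{s'})\big)$, and since $R_{s'}$ is linear and fixes constants, $R_{s'}g(x,\omega)-a_n=P_{x,\omega}\big((g-a_n)(X_{s'})\big)$. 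I split this expectation over $\{X^*_{s'}\le 3\sqrt{k}\tilde{D}_{n+1}\}$ and its complement, recalling (\ref{star}): on the first event $\abs{X_{s'}}\le 4\sqrt{k}\tilde{D}_{n+1}$ since $x\in B_{\sqrt{k}\tilde{D}_{n+1}}$, so $\abs{(g-a_n)(X_{s'})}\le C/\tilde{\kappa}_n$ by the previous display; on the second event $\abs{g-a_n}\le 1$ gives a contribution at most $P_{x,\omega}\big(X^*_{s'}\ge 3\sqrt{k}\tilde{D}_{n+1}\big)\le P_{x,\omega}\big(X^*_{L_{n+1}^2}\ge 3\sqrt{k}\tilde{D}_{n+1}\big)$, using that $X^*$ is nondecreasing in time. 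Now $\omega\in A_n\subset\tilde{A}_{n+1}$ and, by (\ref{D_1}), (\ref{L}) and (\ref{kappa}), $\sqrt{k}\tilde{D}_{n+1}$ is far smaller than $L_{n+3}^2$, so the nearest point $z\in L_{n+1}\mathbb{Z}^d$ to $x$ lies in the lattice appearing in the definition of $\tilde{A}_{n+1}$ and satisfies $\abs{x-z}\le 30\sqrt{d}L_{n+1}$; Control \ref{localization} at scale $n+1$ thus applies with $v=3\sqrt{k}\tilde{D}_{n+1}\ge D_{n+1}$ and yields $P_{x,\omega}\big(X^*_{L_{n+1}^2}\ge 3\sqrt{k}\tilde{D}_{n+1}\big)\le\exp(-3\sqrt{k}\,\tilde{\kappa}_{n+1}/\kappa_{n+1})=\exp(-3\sqrt{k}\,\kappa_{n+1})\le e^{-\kappa_{n+1}}$. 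Since $\tilde{\kappa}_n$ is dominated by every positive power of $L_n$, one has $e^{-\kappa_{n+1}}\le C/\tilde{\kappa}_n$ uniformly in $n$, whence $\abs{R_sR_{1+t}f_E(x,\omega)-\pi_n(R_tf_E)}\le C/\tilde{\kappa}_n$. As this bound is uniform in $x\in B_{\sqrt{k}\tilde{D}_{n+1}}$ and in $s\in[kL_{n+1}^2,(k+1)L_{n+1}^2]$ and holds for every $\omega\in G$, with $\mathbb{P}(\Omega\setminus G)\le Ck^{-(1+\zeta)}L_n^{-\zeta}$, this is exactly the claim.

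The step I expect to be the main obstacle is the second one. One has to check that Control \ref{localization} at scale $n+1$ is genuinely available at every point of $B_{\sqrt{k}\tilde{D}_{n+1}}$, which rests on the inclusion $\tilde{A}_{n+1}\subset A_n$ together with $\sqrt{k}\tilde{D}_{n+1}\ll L_{n+3}^2$, and that passing to the slightly larger ball $B_{4\sqrt{k}\tilde{D}_{n+1}}$ leaves the separation $3\sqrt{k}\tilde{D}_{n+1}\ge D_{n+1}$ that makes the localization tail negligible against $1/\tilde{\kappa}_n$. The remaining work is bookkeeping with the exponent relations (\ref{Holderexponent}), (\ref{delta}), (\ref{L}) and (\ref{kappa}); in particular, no union bound over the continuum of times $s$ is needed, since the monotonicity $X^*_{s'}\le X^*_{L_{n+1}^2}$ renders the localization estimate uniform in $s'\in[0,L_{n+1}^2]$.
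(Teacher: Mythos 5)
Your proof is correct and follows essentially the same route as the paper: the same reduction, on the event $A_n$, to Propositions \ref{u_main}, \ref{u_var} and \ref{o_probability} to control $(R_{n+1})^kR_{1+t}f_E$ on $B_{4\sqrt{k}\tilde{D}_{n+1}}$, followed by propagation over the whole time window via Control \ref{localization} at scale $n+1$. The only cosmetic difference is that the paper implements the localization step with the cutoff $\chi_{2\sqrt{k}\tilde{D}_{n+1}}$ rather than by splitting the expectation over $\{X^*_{s'}\le 3\sqrt{k}\tilde{D}_{n+1}\}$; note also that the inclusion you need is $A_n\subset\tilde{A}_{n+1}$, as used correctly in your main argument, not the reversed inclusion stated in your closing remark.
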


\begin{proof}  Fix $E\in\mathcal{F}$ and $R_1>0$ satisfying $E\in \sigma_{B_{R_1}}$, $n\geq 0$, $1\leq k<\ell_{n+1}^2$ and $t\geq 0$.  We observe that, in view of (\ref{Holderexponent}), (\ref{kappa}) and (\ref{delta}), for each $n\geq 0$, for $\zeta>0$ defined in Proposition \ref{u_var}, there exists $C>0$ independent of $n$ satisfying, $$L_n^{\beta-7(\delta-5a)}\leq C/\tilde{\kappa}_n,$$ and, for each $n\geq 0$ and $1\leq k<\ell_{n+1}^2$, $$L_n^{(2(1+a)^2-1)d-M_0}\leq k^{-(1+\zeta)}L_n^{-\zeta}.$$   Therefore, Proposition \ref{o_probability}, Proposition \ref{u_var} and Proposition \ref{u_main} imply that, for $C=C(R_1,t)>0$ independent of $E$, $n$ and $k$, \begin{equation}\label{u_upmain_1} \mathbb{P}\left(\sup_{x\in B_{4\sqrt{k}\tilde{D}_{n+1}}}\left|\left(R_{n+1}\right)^kR_{1+t}f_E(x,\omega)-\pi_n(R_tf)\right|>\frac{C}{\tilde{\kappa}_n}\right)\leq Ck^{-(1+\zeta)}L_n^{-\zeta}.\end{equation}

Recall the cutoff function $\chi_{2\sqrt{k}\tilde{D}_{n+1}}$ defined in (\ref{cutoff}).  For each $x\in B_{\sqrt{k}\tilde{D}_{n+1}}$ and $kL_{n+1}^2\leq s\leq (k+1)L_{n+1}^2$, we write\begin{multline*} \abs{R_sR_{1+t}f_E(x,\omega)-\pi_n(R_tf_E)} =\left|R_{s-kL_{n+1}^2}\left(\left(R_{n+1}\right)^kR_{1+t}f_E(x,\omega)-\pi_n(R_tf_E)\right)\right|\leq \\ \left|R_{s-kL_{n+1}^2}\chi_{2\sqrt{k}\tilde{D}_{n+1}}\left(\left(R_{n+1}\right)^kR_{1+t}f_E(x,\omega)-\pi_n(R_tf_E)\right)\right| \\ +\left|R_{s-kL_{n+1}^2}(1-\chi_{2\sqrt{k}\tilde{D}_{n+1}})\left(\left(R_{n+1}\right)^kR_{1+t}f_E(x,\omega)-\pi_n(R_tf_E)\right)\right|.\end{multline*}  Since, for each $x\in\mathbb{R}^d$ and $\omega\in\Omega$, $$-1<\left(R_{n+1}\right)^kR_{1+t}f_E(x,\omega)-\pi_n(R_tf_E)<1,$$ we have, for each $\omega\in A_n$ and $x\in B_{\sqrt{k}\tilde{D}_{n+1}}$, using Control \ref{localization}, since $0\leq s-kL_{n+1}^2\leq L_{n+1}^2$, \begin{equation}\label{u_upmain_2} \left|R_{s-kL_{n+1}^2}(1-\chi_{2\sqrt{k}\tilde{D}_{n+1}})\left(\left(R_{n+1}\right)^kR_{1+t}f_E(x,\omega)-\pi_n(R_tf_E)\right)\right|\leq e^{-\kappa_{n+1}}.\end{equation}  Furthermore, for each $x\in\mathbb{R}^d$ and $\omega\in\Omega$, \begin{multline}\label{u_upmain_3} \left|R_{s-kL_{n+1}^2}\chi_{2\sqrt{k}\tilde{D}_{n+1}}\left(\left(R_{n+1}\right)^kR_{1+t}f_E(x,\omega)-\pi_n(R_tf_E)\right)\right| \\ \leq \sup_{x\in B_{4\sqrt{k}\tilde{D}_{n+1}}}\left|\left(R_{n+1}\right)^kR_{1+t}f_E(x,\omega)-\pi_n(R_tf_E)\right|.\end{multline}

To conclude, observe that (\ref{Holderexponent}), (\ref{L}), (\ref{kappa}) and (\ref{delta}) imply that there exists $C>0$ satisfying, for each $n\geq 0$, $$e^{-\kappa_n}\leq C/\tilde{\kappa}_n.$$  Therefore, because $kL_{n+1}^2\leq s< (k+1)L_{n+1}^2$ was arbitrary, using (\ref{u_upmain_2}) and (\ref{u_upmain_3}), for $C=R(R_1,t)>0$ independent of $E$, $n$ and $k$,  \begin{multline*}\mathbb{P}\left(\sup_{kL_{n+1}^2\leq s\leq (k+1)L_{n+1}^2}\sup_{x\in B_{\sqrt{k}\tilde{D}_{n+1}}}\abs{R_sR_{1+t}f_E(x,\omega)-\pi_n(R_tf_E)}>\frac{C}{\tilde{\kappa}_n}\right) \\ \leq \mathbb{P}\left(\sup_{x\in B_{4\sqrt{k}\tilde{D}_{n+1}}}\abs{\left(R_{n+1}\right)^kR_{1+t}f_E(x,\omega)-\pi_n(R_tf_E)}>\frac{C}{\tilde{\kappa}_n}\right)+\mathbb{P}\left(\Omega\setminus A_n\right).\end{multline*}  Since, for each $n\geq 0$ and $1\leq k< \ell_{n+1}^2$, $$L_n^{(2(1+a)^2-1)d-M_0}\leq k^{-(1+\zeta)}L_n^{-\zeta},$$  in view of Proposition \ref{o_probability} and (\ref{u_upmain_1}), for $C=C(R_1,t)>0$ independent of $E$, $n$ and $k$, $$\mathbb{P}\left(\sup_{kL_{n+1}^2\leq s\leq (k+1)L_{n+1}^2}\sup_{x\in B_{\sqrt{k}\tilde{D}_{n+1}}}\abs{R_sR_{1+t}f_E(x,\omega)-\pi_n(R_tf_E)}>\frac{C}{\tilde{\kappa}_n}\right) \leq Ck^{-(1+\zeta)}L_n^{-\zeta},$$ which completes the proof.  \end{proof}

We are now prepared to present the proof of our main result.  Define, for each $n\geq 0$, $1\leq k<\ell_{n+1}^2$, $t\geq 0$ and $E\in\mathcal{F}$ satisfying, for some $R_1>0$, $E\in \sigma_{B_{R_1}}$, for $C=C(R_1,t)>0$ as in Proposition \ref{u_upmain}, \begin{equation}\label{u_event_1}B_{n,k,t}(E)=\left\{\;\omega\in\Omega\;|\;\sup_{kL_{n+1}^2\leq s\leq (k+1)L_{n+1}^2}\sup_{x\in B_{\sqrt{k}\tilde{D}_{n+1}}}\left| R_sR_{1+t}f_E(x,\omega)-\pi_n(R_tf_E)\right|\leq \frac{C}{\kappa_n}\;\right\},\end{equation} and, for each $n\geq 0$, \begin{equation}\label{u_event} B_{n,t}(E)=\bigcap_{k=1}^{\ell_{n+1}^2-1}B_{n,k,t}(E).\end{equation}  We have, using Proposition \ref{u_upmain}, for each $n\geq 0$, $t\geq 0$, $E\in\mathcal{F}$ satisfying, for some $R_1>0$, $E\in\sigma_{B_{R_1}},$ for $C=C(R_1,t)>0$ independent of $n$ and $E$, $$\mathbb{P}\left(\Omega\setminus B_{n,t}(E)\right)\leq \sum_{k=1}^{\ell_{n+1}^2-1}\mathbb{P}\left(\Omega\setminus B_{n,k,t}(E)\right)\leq \sum_{k=1}^{\ell_{n+1}^2-1}Ck^{-(1+\zeta)}L_n^{-\zeta}\leq CL_{n}^{-\zeta}.$$  And, in view of (\ref{L}), for each $t\geq 0$ and $E\in\mathcal{F}$ satisfying $E\in\sigma_{B_{R_1}}$, for some $R_1>0$, $$\sum_{n=0}^\infty\mathbb{P}\left(\Omega\setminus B_{n,t}(E)\right)<\infty.$$  We therefore define, for each $t\geq 0$ and $E\in\mathcal{F}$ satisfying $E\in\sigma_{B_{R_1}}$, for some $R_1>0$, the subset \begin{equation}\label{o_mainevent} \Omega_{t}(E)=\left\{\;\omega\in\Omega\;|\;\textrm{There exists}\;\overline{n}(\omega)\geq0\;\textrm{such that, for all}\;n\geq\overline{n},\;\omega\in B_{n,t}.\;\right\},\end{equation} where the Borel-Cantelli lemma implies that, for every $t\geq 0$ and $E\in\mathcal{F}$ satisfying $E\in\sigma_{B_{R_1}}$, for some $R_1>0$, $$\mathbb{P}(\Omega_t(E))=1.$$

We now present the invariance property of the measure $\pi$.  In the proof, we use that fact that $$\mathcal{F}=\sigma\left(\bigcup_{R>0}\sigma_{B_R}\right).$$  That is, the sigma algebra $\mathcal{F}$ is generated by subsets satisfying the hypothesis of Proposition \ref{u_localize}.

\begin{prop}\label{u_invariant}  Assume (\ref{steady}) and (\ref{constants}).  For every $E\in\mathcal{F}$ and $t\geq 0$, $$\pi(E)=\overline{\pi}(R_tf_E)=\int_{\Omega}R_tf_E(0,\omega)\;d\pi.$$\end{prop}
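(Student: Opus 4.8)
The plan is to reduce the statement to subsets $E$ in the generating algebra $\bigcup_{R>0}\sigma_{B_R}$ and then conclude by uniqueness of measures. First note that the second equality is free: since $f_E$ satisfies (\ref{o_stationary}) by (\ref{o_translate}), Proposition \ref{o_sol} gives that $R_tf_E$ is stationary, so Proposition \ref{o_integral} applied to $g=R_tf_E\in L^\infty(\mathbb{R}^d\times\Omega)$ yields $\overline{\pi}(R_tf_E)=\int_\Omega R_tf_E(0,\omega)\,d\pi$ for every $E\in\mathcal{F}$. It therefore remains to prove $\pi(E)=\overline{\pi}(R_tf_E)$. Since $R_tf_E(0,\omega)=P_{0,\omega}({\bf{1}}_E(\tau_{X_t}\omega))=P_t(\omega,E)$ and $P_t(\omega,\cdot)$ is for each fixed $\omega$ a probability measure on $\mathcal{F}$, the map $E\mapsto\overline{\pi}(R_tf_E)=\int_\Omega P_t(\omega,E)\,d\pi$ defines a probability measure on $(\Omega,\mathcal{F})$ (it is the average over $\pi$ of the transition kernels $P_t(\omega,\cdot)$, countably additive by monotone convergence). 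As $\pi$ is also a probability measure by Proposition \ref{o_pmeasure} and $\mathcal{F}=\sigma\left(\bigcup_{R>0}\sigma_{B_R}\right)$ with $\bigcup_{R>0}\sigma_{B_R}$ an algebra, hence a $\pi$-system generating $\mathcal{F}$, Dynkin's theorem reduces the claim to showing $\pi(E)=\overline{\pi}(R_tf_E)$ for all $R_1>0$ and $E\in\sigma_{B_{R_1}}$.

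Fix such $R_1>0$, $E\in\sigma_{B_{R_1}}$ and $t\geq 0$. By Proposition \ref{o_identify}, $\pi(E)=\overline{\pi}(f_E)=\lim_{n\to\infty}\pi_n(f_E)$ and $\overline{\pi}(R_tf_E)=\lim_{n\to\infty}\pi_n(R_tf_E)$ (both applicable since $f_E$ and $R_tf_E$ are stationary and bounded by $1$), so it suffices to prove $\abs{\pi_n(f_E)-\pi_n(R_tf_E)}\to 0$. The key input is Proposition \ref{u_upmain}, which I apply twice: once with the given parameter $t$, and once with $t$ replaced by $0$, noting $R_0f_E=f_E$ so that the latter controls $R_sR_1f_E$ against $\pi_n(f_E)$. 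Both applications produce full-probability events $\Omega_t(E)$ and $\Omega_0(E)$ as in (\ref{o_mainevent}), so their intersection is nonempty; fix $\omega$ in it and take $n$ large enough that $n\geq\overline{n}(\omega)$ for both and that $t<L_{n+1}^2$.

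Now take $k=1$ in Proposition \ref{u_upmain}, so that $0\in B_{\sqrt k\tilde{D}_{n+1}}=B_{\tilde{D}_{n+1}}$, and evaluate at $x=0$. Using the semigroup identity $R_aR_b=R_{a+b}$, which holds by the uniqueness in Proposition \ref{o_sol}, the $t$-version with the admissible time $s=L_{n+1}^2$ gives
$$\abs{R_{L_{n+1}^2+1+t}f_E(0,\omega)-\pi_n(R_tf_E)}\leq\frac{C}{\tilde{\kappa}_n},$$
while the $0$-version with the admissible time $s=L_{n+1}^2+t\in[L_{n+1}^2,2L_{n+1}^2]$ (admissible precisely because $n$ was chosen with $0\leq t<L_{n+1}^2$) gives
$$\abs{R_{L_{n+1}^2+1+t}f_E(0,\omega)-\pi_n(f_E)}\leq\frac{C}{\tilde{\kappa}_n}.$$
Subtracting, $\abs{\pi_n(f_E)-\pi_n(R_tf_E)}\leq 2C/\tilde{\kappa}_n$; since the left-hand side is independent of $\omega$ and $\tilde{\kappa}_n\to\infty$, letting $n\to\infty$ gives $\overline{\pi}(f_E)=\overline{\pi}(R_tf_E)$, i.e. $\pi(E)=\overline{\pi}(R_tf_E)$, which completes the reduced claim and hence the proposition.

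The argument is essentially bookkeeping once Proposition \ref{u_upmain} is available; the only point requiring care is arranging that the single evaluation time $L_{n+1}^2+1+t$ — realized as $s+1$ with $s=L_{n+1}^2+t$ in one application and as $s+1+t$ with $s=L_{n+1}^2$ in the other — lies in the time window associated with the same index $k=1$, which is exactly what forces the restriction to $n$ large relative to $t$. I do not anticipate a deeper obstacle, since the substantive averaging and localization estimates have already been carried out in Propositions \ref{o_main}, \ref{u_var} and \ref{u_upmain}.
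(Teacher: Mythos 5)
Your proof is correct and follows essentially the same route as the paper's: both reduce to sets in the algebra $\bigcup_{R>0}\sigma_{B_R}$ and extend by uniqueness of measures (you via Dynkin's theorem, the paper via Carath\'eodory), and both obtain $\overline{\pi}(f_E)=\overline{\pi}(R_tf_E)$ by working on the intersection $\Omega_t(E)\cap\Omega_0(E)$ and exploiting the semigroup identity $R_sR_{1+t}f_E=R_{s+t}R_1f_E$ together with Proposition \ref{u_upmain}. Your version merely makes the comparison quantitative at a fixed $n$ with $k=1$ rather than passing to the full limit $s\to\infty$; the substance is identical.
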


\begin{proof}  We define \begin{equation}\label{u_invariant_1}\tilde{\mathcal{F}}=\bigcup_{R>0}\sigma_{B_R},\end{equation} and observe that $\tilde{\mathcal{F}}$ is an algebra of subsets of $\Omega$.  That is, $\tilde{\mathcal{F}}$ is closed under relative complements and finite unions.  Furthermore, for every $E\in\tilde{\mathcal{F}}$, there exists $R_1=R_1(E)>0$ satisfying $E\in\sigma_{B_{R_1}}$.

Fix $t\geq 0$ and $E\in\tilde{\mathcal{F}}$.  For every $\omega\in\Omega_t(E)\cap\Omega_0(E)$, (\ref{u_event_1}) and (\ref{u_event}) imply that $$\pi(E)=\overline{\pi}(f_E)=\lim_{s\rightarrow\infty}R_sR_1f_E(0,\omega)=\lim_{t\rightarrow\infty}R_sR_{t+1}f_E(0,\omega)=\overline{\pi}(R_tf_E).$$  And, in view of Proposition \ref{o_integral}, since $R_tf_E$ satisfies (\ref{o_stationary}), $$\overline{\pi}(R_tf_E)=\int_{\Omega}R_tf_E(0,\omega)\;d\pi.$$ Therefore, for every $t\geq 0$ and $E\in\tilde{\mathcal{F}}$, $$\pi(E)=\int_{\Omega}R_tf_E(0,\omega)\;d\pi.$$

To conclude, the absolute continuity of $\pi$ with respect to $\mathbb{P}$ and the dominated convergence theorem imply that, using a repetition of the argument appearing in Proposition \ref{o_pmeasure}, for each $t\geq 0$, the rule $$E\rightarrow \int_{\Omega}R_tf_E(0,\omega)\;d\pi$$ defines a probability measure on $(\Omega, \mathcal{F})$.  Therefore, since $$\mathcal{F}=\sigma(\tilde{\mathcal{F}}),$$ the Caratheodory Extension Theorem implies that, for every $E\in\mathcal{F}$ and $t\geq 0$, $$\pi(E)=\int_{\Omega}R_tf_E(0,\omega)\;d\pi,$$ which completes the argument.  \end{proof}

We now present our main result, which states the $\pi$ is the unique invariant measure which is absolutely continuous with respect to $\mathbb{P}$.

\begin{thm}\label{u_thm}  Assume (\ref{steady}) and (\ref{constants}).  There exists a unique invariant measure which is absolutely continuous with respect to $\mathbb{P}$. \end{thm}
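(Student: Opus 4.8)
The plan is to observe that existence is already established: Proposition \ref{o_pmeasure} shows that the function $\pi$ of (\ref{o_p}) is a probability measure absolutely continuous with respect to $\mathbb{P}$, and Proposition \ref{u_invariant} shows that it is invariant. Hence the real content of the theorem is \emph{uniqueness}. So I would let $\pi'$ be an arbitrary invariant probability measure on $(\Omega,\mathcal{F})$ satisfying $\pi'\ll\mathbb{P}$ and aim to prove $\pi'=\pi$. The first step is to extract the almost sure large-time limit of $R_tf_E(0,\cdot)$ for $E$ in the generating algebra $\tilde{\mathcal{F}}=\bigcup_{R>0}\sigma_{B_R}$. Fix such an $E$, so that $E\in\sigma_{B_{R_1}}$ for some $R_1>0$, and recall the full-probability set $\Omega_0(E)$ from (\ref{o_mainevent}). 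For $\omega\in\Omega_0(E)$ and all $n\geq\overline{n}(\omega)$ one has $\omega\in B_{n,0}(E)$, whence $\abs{R_sR_1f_E(0,\omega)-\pi_n(f_E)}\leq C/\kappa_n$ for every $1\leq k<\ell_{n+1}^2$ and every $kL_{n+1}^2\leq s\leq(k+1)L_{n+1}^2$; these intervals exhaust $[L_{n+1}^2,L_{n+2}^2]$, so as $n$ ranges they cover all sufficiently large times. Since $\pi_n(f_E)\to\overline{\pi}(f_E)=\pi(E)$ by Proposition \ref{o_identify} and $\kappa_n\to\infty$, this gives $\lim_{t\to\infty}R_tf_E(0,\omega)=\pi(E)$ for every $\omega\in\Omega_0(E)$.

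The second step uses the invariance of $\pi'$. Since $R_tf_E(0,\omega)=P_t(\omega,E)$, invariance of $\pi'$ means $\pi'(E)=\int_\Omega R_tf_E(0,\omega)\,d\pi'$ for every $t\geq0$. The comparison principle and $0\leq f_E\leq1$ bound $0\leq R_tf_E(0,\omega)\leq1$ uniformly in $t$ and $\omega$, and because $\pi'\ll\mathbb{P}$ the convergence $R_tf_E(0,\cdot)\to\pi(E)$ holds $\pi'$-almost everywhere. The dominated convergence theorem then yields $\pi'(E)=\lim_{t\to\infty}\int_\Omega R_tf_E(0,\omega)\,d\pi'=\pi(E)$ for every $E\in\tilde{\mathcal{F}}$.

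Finally, $\tilde{\mathcal{F}}$ is an algebra of subsets of $\Omega$ (as noted in the proof of Proposition \ref{u_invariant}) generating $\mathcal{F}$, hence in particular a $\pi$-system; since $\pi$ and $\pi'$ are probability measures agreeing on $\tilde{\mathcal{F}}$, Dynkin's $\pi$--$\lambda$ theorem (equivalently the uniqueness half of the Carath\'eodory extension theorem) forces $\pi=\pi'$ on $\sigma(\tilde{\mathcal{F}})=\mathcal{F}$, which completes the argument. The point requiring care is that the pointwise limit $R_tf_E(0,\cdot)\to\pi(E)$ is only $\mathbb{P}$-almost sure, so passing to the limit inside $\int\cdot\,d\pi'$ genuinely depends on the hypothesis $\pi'\ll\mathbb{P}$; this is exactly why uniqueness is asserted only within the class of measures absolutely continuous with respect to $\mathbb{P}$. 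The ergodicity assertion in Theorem \ref{i_main} would then follow by a separate, standard argument: absolute continuity of $\pi$ together with the invariance of $\mathbb{P}$ under $\{\tau_x\}$ and irreducibility of the kernels $R_t$ (the densities $p(x,1,y,\omega)\,dy$ being equivalent to Lebesgue measure) upgrades $\pi\ll\mathbb{P}$ to mutual absolute continuity, and ergodicity of the Markov process follows since any invariant event would give, via the limit above, an indicator equal $\mathbb{P}$-a.s. to a $\{\tau_x\}$-invariant function, hence a.s. constant.
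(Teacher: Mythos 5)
Your proposal is correct and follows essentially the same route as the paper: existence is delegated to Propositions \ref{o_pmeasure} and \ref{u_invariant}, and uniqueness is obtained by combining the almost sure limit $R_tf_E(0,\omega)\rightarrow\pi(E)$ on $\Omega_0(E)$ with the invariance identity for the competing measure, dominated convergence (which is where $\mu\ll\mathbb{P}$ enters), and an extension argument from the generating algebra $\tilde{\mathcal{F}}$ to $\mathcal{F}$. Your explicit unpacking of why the events $B_{n,k,0}(E)$ cover all large times, and your remark isolating exactly where absolute continuity with respect to $\mathbb{P}$ is used, are both accurate refinements of the paper's more terse presentation.
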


\begin{proof}  The measure $\pi$ constructed according to (\ref{o_p}) was shown in Proposition \ref{o_pmeasure} to be absolutely continuous with respect to $\mathbb{P}$ and, was shown to be invariant in Proposition \ref{u_invariant}.  It therefore suffices to prove the uniqueness.

Suppose the $\mu$ is a probability measure on $(\Omega, \mathcal{F})$ which is absolutely continuous with respect to $\mathbb{P}$ and satisfies, for each $t\geq 0$ and $E\in\mathcal{F}$, \begin{equation}\label{u_thm_1}\mu(E)=\int_{\Omega}R_tf_E(0,\omega)\;d\mu.\end{equation}  Fix $E\in\tilde{\mathcal{F}}$.  In view of (\ref{u_event_1}) and (\ref{u_event}), for every $\omega\in \Omega_0(E)$, as $t\rightarrow\infty$, $$R_tf_E(0,\omega)\rightarrow \pi(E).$$  Furthermore, since $\mu$ is absolutely continuous with respect to $\mathbb{P}$, \begin{equation}\label{u_thm_2}\mu(\Omega_0(E))=\mathbb{P}(\Omega_0(E))=1.\end{equation}  Therefore, the dominated convergence theorem, (\ref{u_thm_1}) and (\ref{u_thm_2}) imply that $$\mu(E)=\lim_{t\rightarrow\infty}\int_\Omega R_tf_E(0,\omega)\;d\mu=\int_\Omega \pi(E)\;d\mu=\pi(E).$$  Since $E\in\tilde{\mathcal{F}}$ was arbitrary, the Caratheodory Extension Theorem implies, using $\mathcal{F}=\sigma(\tilde{\mathcal{F}})$, for every $E\in\mathcal{F}$, $$\pi(E)=\mu(E),$$ which completes the argument.  \end{proof}

In the final proposition of this section, we prove that if the transformation group \begin{equation}\label{u_ergodic}\left\{\tau_x\right\}_{x\in\mathbb{R}^d}\;\;\textrm{is ergodic,}\end{equation} then the invariant measure $\pi$ is mutually absolutely continuous with respect to $\mathbb{P}$ and is an ergodic probability measure for the canonical Markov process on $\Omega$ defining (\ref{i_initial}).

\begin{prop}\label{u_mutual}  Assume (\ref{steady}), (\ref{constants}) and (\ref{u_ergodic}).  The invariant measure $\pi$ is mutually absolutely continuous with respect to $\mathbb{P}$ and defines an ergodic probability measure for the canonical Markov process on $\Omega$ defining (\ref{i_initial}).\end{prop}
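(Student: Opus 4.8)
The plan is to derive both assertions from facts already in hand: the invariance of $\pi$ (Proposition~\ref{u_invariant}), the absolute continuity $\pi\ll\mathbb{P}$ (Proposition~\ref{o_pmeasure}), the fact that for each $\omega$ the measure $p(0,1,y,\omega)\,dy$ on $\mathbb{R}^d$ is equivalent to Lebesgue measure (used in the proof of Proposition~\ref{o_pmeasure}), and the ergodicity~(\ref{u_ergodic}) of $\{\tau_x\}_{x\in\mathbb{R}^d}$. The mechanism in both parts is the same: using $R_1f_N(0,\omega)=P_1(\omega,N)=\int_{\mathbb{R}^d}p(0,1,y,\omega)\mathbf{1}_N(\tau_y\omega)\,dy$, a pointwise identity of the form $R_1f_N(0,\omega)=c$ translates into a statement about the Lebesgue measure of $\{y:\tau_y\omega\in N\}$, which concerns a $\{\tau_x\}$-invariant event and is therefore trivialized by ergodicity. (By bi-measurability of the coefficients and Fubini's theorem, $\omega\mapsto\abs{\{y\in\mathbb{R}^d:\tau_y\omega\in N\}}$ is measurable for every $N\in\mathcal{F}$, where $\abs{\cdot}$ denotes Lebesgue measure.)

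First I would establish $\mathbb{P}\ll\pi$. Fix $N\in\mathcal{F}$ with $\pi(N)=0$. Proposition~\ref{u_invariant} with $t=1$ gives $0=\pi(N)=\int_\Omega R_1f_N(0,\omega)\,d\pi$, and since $R_1f_N(0,\cdot)\geq 0$ this forces $R_1f_N(0,\omega)=0$ for $\pi$-a.e.\ $\omega$. Because $p(0,1,y,\omega)\,dy$ is equivalent to Lebesgue measure, the set $N^{\ast}:=\{\omega:\abs{\{y\in\mathbb{R}^d:\tau_y\omega\in N\}}>0\}$ coincides with $\{\omega:R_1f_N(0,\omega)>0\}$, so $\pi(N^{\ast})=0$. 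The set $N^{\ast}$ is measurable and $\{\tau_x\}$-invariant, since $\tau_z\omega\in N^{\ast}$ if and only if $\{y':\tau_{y'}\omega\in N\}$ has positive measure. Ergodicity then forces $\mathbb{P}(N^{\ast})\in\{0,1\}$, and the value $1$ is impossible: it would give $\mathbb{P}(\Omega\setminus N^{\ast})=0$, hence $\pi(\Omega\setminus N^{\ast})=0$ by $\pi\ll\mathbb{P}$, contradicting $\pi(N^{\ast})=0$ and $\pi(\Omega)=1$. Thus $\mathbb{P}(N^{\ast})=0$, and Fubini's theorem with the translation invariance of $\mathbb{P}$ yields $\abs{B_1}\,\mathbb{P}(N)=\int_\Omega\abs{\{y\in B_1:\tau_y\omega\in N\}}\,d\mathbb{P}(\omega)=0$, so $\mathbb{P}(N)=0$. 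Together with Proposition~\ref{o_pmeasure} this proves that $\pi$ and $\mathbb{P}$ are mutually absolutely continuous.

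Next I would prove that $\pi$ is ergodic for the canonical Markov process; it suffices to show that any $E\in\mathcal{F}$ with $R_tf_E(0,\omega)=\mathbf{1}_E(\omega)$ for $\pi$-a.e.\ $\omega$ and all $t\geq 0$ satisfies $\pi(E)\in\{0,1\}$. By the mutual absolute continuity just proved, $R_1f_E(0,\omega)=\mathbf{1}_E(\omega)$ holds also for $\mathbb{P}$-a.e.\ $\omega$. Appealing again to the equivalence of $p(0,1,y,\omega)\,dy$ with Lebesgue measure: for $\mathbb{P}$-a.e.\ $\omega\in E$ the identity $\int p(0,1,y,\omega)\mathbf{1}_E(\tau_y\omega)\,dy=1=\int p(0,1,y,\omega)\,dy$ forces $\tau_y\omega\in E$ for Lebesgue-a.e.\ $y$, while for $\mathbb{P}$-a.e.\ $\omega\notin E$ the identity $\int p(0,1,y,\omega)\mathbf{1}_E(\tau_y\omega)\,dy=0$ forces $\tau_y\omega\notin E$ for Lebesgue-a.e.\ $y$. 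Hence $E$ differs by a $\mathbb{P}$-null set from $\widetilde{E}:=\{\omega:\tau_y\omega\in E\text{ for Lebesgue-a.e.\ }y\in\mathbb{R}^d\}$, which is measurable and $\{\tau_x\}$-invariant. Ergodicity gives $\mathbb{P}(E)=\mathbb{P}(\widetilde{E})\in\{0,1\}$, and therefore $\pi(E)\in\{0,1\}$ by mutual absolute continuity, which completes the proof.

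I do not expect a serious obstacle; the only points requiring care are that in each case one must pass to a genuinely measurable $\{\tau_x\}$-invariant event before invoking ergodicity, and that the dichotomy argument for $N^{\ast}$ must use $\pi\ll\mathbb{P}$ in the correct direction. The sole analytic ingredient, the strict positivity (equivalence to Lebesgue measure) of the transition density $p(0,1,\cdot,\omega)$, is already recorded in the proof of Proposition~\ref{o_pmeasure}, so no new estimate is needed.
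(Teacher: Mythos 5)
Your argument is correct, and it rests on the same two pillars as the paper's proof: the equivalence of the transition density $p(0,1,\cdot,\omega)\,dy$ with Lebesgue measure, which converts the Markov-invariance identities $R_1f_E(0,\omega)=\mathbf{1}_E(\omega)$ into statements about the translates $\tau_y\omega$, and the ergodicity of $\left\{\tau_x\right\}_{x\in\mathbb{R}^d}$. The packaging differs in two respects. For $\mathbb{P}\ll\pi$ the paper argues by contradiction via the Lebesgue decomposition theorem, producing a set $E$ with $\pi(\Omega\setminus E)=0$ and $0<\mathbb{P}(\Omega\setminus E)<1$; you instead take an arbitrary $\pi$-null set $N$ and show directly that $\mathbb{P}(N)=0$, which is more economical. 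More substantively, the paper's route only yields sets invariant for almost every $y$ and in the measure algebra, and must then invoke the continuity of $y\mapsto\mathbf{1}_E(\tau_y\omega)$ from $\mathbb{R}^d$ to $L^1(\Omega)$ to upgrade to invariance for every $y$ before ergodicity can be applied; your auxiliary sets $N^{\ast}$ and $\widetilde{E}$ are exactly $\left\{\tau_x\right\}$-invariant by construction, so that upgrading step disappears entirely. The only price is the measurability of $\omega\mapsto\abs{\left\{y:\tau_y\omega\in N\right\}}$, which, as you note, follows from the joint measurability already used implicitly in the Fubini argument of Proposition \ref{o_pmeasure}. Both proofs are of comparable length; yours is slightly cleaner on the measure theory and avoids an appeal to the Lebesgue decomposition.
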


\begin{proof}  It was shown in Proposition \ref{o_pmeasure} that $\pi$ is absolutely continuous with respect to $\mathbb{P}$.  It remains to show that $\mathbb{P}$ is absolutely continuous with respect to $\pi$ on $(\Omega, \mathcal{F})$.

We proceed by contradiction, if not the Lebesgue decomposition theorem implies that there exists a subset $E\subset\Omega$ satisfying $0<\mathbb{P}(\Omega\setminus E)<1$ and $\pi(\Omega\setminus E)=0$, and with $\mathbb{P}$ absolutely continuous with respect to $\pi$ on $E$.  Since $$1=\pi(E)=\int_{\Omega}R_1f_E(0,\omega)\;d\pi=\int_\Omega\int_{\mathbb{R}^d}p(0,1,y,\omega){\bf{1}}_E(\tau_y\omega)\;dy\;d\pi,$$ and since $\pi$ is absolutely continuous with respect to $\mathbb{P}$, this implies that, for almost every $\omega\in E$ with respect to $\mathbb{P}$, and for almost every $y\in\mathbb{R}^d$, we have $\tau_y\omega\in E.$  Fubini's theorem therefore implies that, for almost every $y\in\mathbb{R}^d$, as elements of the measure algebra of $\mathcal{F}$, $$\tau_y(E)=E.$$  And, since the map $y\rightarrow {\bf{1}}_E(\tau_y\omega)$ is continuous from $\mathbb{R}^d$ to $L^1(\Omega)$, we conclude that, for every $y\in\mathbb{R}^d$, as elements of the measure algebra of $(\Omega, \mathcal{F},\mathbb{P})$, $$\tau_y(E)=E.$$  Therefore, using (\ref{u_ergodic}), we have $\mathbb{P}(E)=0$ or $\mathbb{P}(E)=1$, a contradiction.  Since $E$ was arbitrary, this completes the proof of mutual absolute continuity.

We now prove the ergodicity.  Suppose that, for $E\in\mathcal{F}$ and $t\geq 0$, we have $$R_tf_E(0,\omega)=P_t(\omega,E)=1\;\;\textrm{for almost every}\;\;\omega\in E\;\;\textrm{with respect to}\;\;\pi.$$  Since $\mathbb{P}$ is absolutely continuous with respect to $\pi$, this implies that $$R_tf_E(0,\omega)=P_t(\omega,E)=1\;\;\textrm{for almost every}\;\;\omega\in E\;\;\textrm{with respect to}\;\;\mathbb{P},$$  which, by repeating the above argument, implies that $E\in\mathcal{F}$ is an invariant set under the transformation group $\left\{\tau_x\right\}_{x\in\mathbb{R}^d}$.  Therefore, because (\ref{u_ergodic}) implies that $\mathbb{P}(E)=0$ or $\mathbb{P}(E)=1$, we conclude that, since $\pi$ is absolutely continuous with respect to $\mathbb{P}$, either $\pi(E)=0$ or $\pi(E)=1$.  Since $E$ was arbitrary, this completes the argument.  \end{proof}

\section{A Proof of Homogenization for Locally Measurable Functions}

In this section, we characterize the limiting behavior, as $\epsilon\rightarrow 0$, on a subset of full probability, of solutions $u^\epsilon:\mathbb{R}^d\times[0,\infty)\times\Omega\rightarrow\mathbb{R}$ satisfying \begin{equation}\label{h_eq}\left\{\begin{array}{ll} u^\epsilon_t-\frac{1}{2}\tr(A(x/\epsilon,\omega)D^2u^\epsilon)+\frac{1}{\epsilon}b(x/\epsilon,\omega)\cdot Du^\epsilon=0 & \textrm{on}\;\;\mathbb{R}^d\times(0,\infty), \\ u^\epsilon(x,0,\omega)=f(x/\epsilon,\omega) & \textrm{on}\;\;\mathbb{R}^d\times\left\{0\right\},\end{array}\right.\end{equation} for initial data $f\in L^\infty(\mathbb{R}^d\times\Omega)$ satisfying (\ref{o_stationary}) and, for some $R>0$, $f(0,\omega)\in L^\infty(\Omega,\sigma_{B_R}).$  In what follows, for each $E\in \bigcup_{R>0}\sigma_{B_R}$, recall the subset of full probability $\Omega_0(E)$ defined in (\ref{o_mainevent}), and the related subsets $B_{n,0}(E)$ defined in (\ref{u_event}).

\begin{thm}\label{h_thm} Assume (\ref{steady}) and (\ref{constants}).  Suppose $f\in L^\infty(\mathbb{R}^d\times\Omega)$ satisfies (\ref{o_stationary}) and, for some $R_1>0$, satisfies $f(0,\omega)\in L^\infty(\Omega,\sigma_{B_{R_1}})$.  For each $\epsilon>0$, write $u^\epsilon$ for the solution of (\ref{h_eq}) with initial data $f(x/\epsilon,\omega)$.  On a subset of full probability, as $\epsilon\rightarrow 0$, $$u^\epsilon\rightarrow\overline{\pi}(f)=\int_{\Omega}f(0,\omega)\;d\pi\;\;\textrm{locally uniformly on}\;\;\mathbb{R}^d\times(0,\infty).$$\end{thm}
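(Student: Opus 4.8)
The plan is to remove the oscillations from (\ref{h_eq}) by the parabolic change of variables $x\mapsto x/\epsilon$, $t\mapsto t/\epsilon^2$, which turns it into the fixed equation (\ref{o_eq}), so that the assertion becomes a statement about the behavior of $R_sf$ at large time $s$ and large spatial argument; this is precisely what Proposition \ref{u_upmain} controls. Concretely, if $u^\epsilon$ solves (\ref{h_eq}) then $v^\epsilon(y,s,\omega):=u^\epsilon(\epsilon y,\epsilon^2 s,\omega)$ solves (\ref{o_eq}) with initial data $y\mapsto f(y,\omega)$, so by the uniqueness in Proposition \ref{o_sol},
$$u^\epsilon(x,t,\omega)=R_{t/\epsilon^2}f(x/\epsilon,\omega)\qquad\textrm{on }\ \mathbb{R}^d\times(0,\infty).$$
Hence, for a compact $K\subset\mathbb{R}^d\times(0,\infty)$ contained in $\{\,\abs x\le\rho\,\}\times[t_0,t_1]$ with $0<t_0\le t_1$, bounding $\sup_K\abs{u^\epsilon-\overline\pi(f)}$ amounts to bounding $\abs{R_sf(y,\omega)-\overline\pi(f)}$ uniformly over $s\ge t_0/\epsilon^2$ and $\abs y\le\rho/\epsilon\le(\rho/\sqrt{t_0})\sqrt s$, as $\epsilon\to0$.

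Next I would reduce to indicator data. Since simple $\sigma_{B_{R_1}}$-measurable functions are dense in $L^\infty(\Omega,\sigma_{B_{R_1}})$ for the supremum norm, for each $j\ge1$ choose $g_j=\sum_{i=1}^{N_j}c_i^{(j)}{\bf 1}_{E_i^{(j)}}$ with $E_i^{(j)}\in\sigma_{B_{R_1}}$ and $\norm{f(0,\cdot)-g_j}_{L^\infty(\Omega)}\le1/j$. By stationarity of $f$ and (\ref{o_translate}), the stationary function $\tilde g_j(y,\omega):=\sum_i c_i^{(j)}f_{E_i^{(j)}}(y,\omega)$ satisfies $\norm{f-\tilde g_j}_{L^\infty(\mathbb{R}^d\times\Omega)}\le1/j$, and the linearity of (\ref{h_eq}) together with the comparison principle gives $\norm{u^\epsilon-u^\epsilon_{\tilde g_j}}_{L^\infty}\le1/j$ uniformly in $\epsilon$, with $u^\epsilon_{\tilde g_j}=\sum_i c_i^{(j)}u^\epsilon_{f_{E_i^{(j)}}}$; moreover $\abs{\overline\pi(f)-\overline\pi(\tilde g_j)}\le1/j$ by (\ref{o_integral_2}) and $\overline\pi(\tilde g_j)=\sum_i c_i^{(j)}\pi(E_i^{(j)})$. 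Therefore it suffices to prove that for every $E\in\sigma_{B_{R_1}}$, on the full-probability set $\Omega_0(E)$ of (\ref{o_mainevent}), $u^\epsilon_{f_E}\to\pi(E)$ locally uniformly on $\mathbb{R}^d\times(0,\infty)$; the theorem then follows on $\Omega_*:=\bigcap_{j\ge1}\bigcap_{i=1}^{N_j}\Omega_0(E_i^{(j)})$, a countable intersection of full-probability events, by the triangle inequality and $j\to\infty$.

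For the indicator case, fix $E\in\sigma_{B_{R_1}}$, $\omega\in\Omega_0(E)$ and $\Lambda\ge1$; I claim $\sup_{\abs y\le\Lambda\sqrt s}\abs{R_sf_E(y,\omega)-\pi(E)}\to0$ as $s\to\infty$, which by the scaling step (taking $\Lambda=\rho/\sqrt{t_0}$) yields the desired local uniform convergence of $u^\epsilon_{f_E}$. To prove the claim, for $s$ large write $s=\sigma+1$, let $n=n(s)$ be the largest integer with $L_{n+1}^2\le\sigma$, and $k=\lfloor\sigma/L_{n+1}^2\rfloor$; then $1\le k<\ell_{n+1}^2$ and $\sigma\in[kL_{n+1}^2,(k+1)L_{n+1}^2]$, while (\ref{L}) and (\ref{D}) give $\sqrt k\,\tilde D_{n+1}\ge\tfrac12\sqrt s\,\tilde\kappa_{n+1}$. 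Since $n(s)\to\infty$ as $s\to\infty$, also $\tilde\kappa_{n+1}\to\infty$, so for $s$ large one has $\sqrt k\,\tilde D_{n+1}\ge\Lambda\sqrt s$ and $n(s)\ge\overline{n}(\omega)$; then, applying Proposition \ref{u_upmain} with $t=0$ (whose controlled quantity is $R_\sigma R_1f_E=R_sf_E$), the defining property of $\Omega_0(E)$ through (\ref{u_event_1})--(\ref{u_event}) gives $\abs{R_sf_E(y,\omega)-\pi_n(f_E)}\le C/\kappa_n$ for all $\abs y\le\Lambda\sqrt s$, and Proposition \ref{o_identify} gives $\abs{\pi_n(f_E)-\pi(E)}\le C\sum_{m\ge n}L_m^{\beta-7(\delta-5a)}$; both bounds tend to $0$ as $s\to\infty$, proving the claim.

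The main obstacle is precisely this scale matching: using the relations among $L_n,\ell_n,D_n,\tilde D_n,\tilde\kappa_n$ recorded in (\ref{L})--(\ref{D_1}), one must verify that the spatial radius $\sqrt k\,\tilde D_{n(s)+1}$ at which Proposition \ref{u_upmain} controls $R_sf_E$ outgrows \emph{every} fixed multiple $\Lambda\sqrt s$ of the diffusive length, so that a given compact $K$ lies, for all small $\epsilon$, inside the region of validity of the estimate; this is exactly where the slow divergence of $\tilde\kappa_n$ is used. The remaining ingredients — the scaling identity, the semigroup property and $L^\infty$-contractivity of the maps $R_s$ and of solutions of (\ref{h_eq}), linearity, and density of simple functions in $L^\infty(\Omega,\sigma_{B_{R_1}})$ — are routine.
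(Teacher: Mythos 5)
Your proposal is correct and follows essentially the same route as the paper: the parabolic rescaling to reduce to the large-time behavior of $R_sf$, the reduction to indicator data via linearity and the $L^\infty$-contraction of $\pi_n$ and $\overline{\pi}$, and then the combination of Proposition \ref{u_upmain} (through the events $B_{n,k,0}(E)$ defining $\Omega_0(E)$) with Proposition \ref{o_identify}, together with the scale-matching observation that $\sqrt{k}\,\tilde{D}_{n+1}$ outgrows any fixed multiple of $\sqrt{s}$ because $\tilde{\kappa}_{n+1}\rightarrow\infty$. Your treatment of the reduction to indicators via simple-function approximation and a countable intersection of the sets $\Omega_0(E_i^{(j)})$ is just a more explicit version of the step the paper dispatches with "the definition of the Lebesgue integral."
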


\begin{proof}  Observe that, for each $n\geq 0$, $f,g\in L^\infty(\mathbb{R}^d\times\Omega)$ satisfying (\ref{o_stationary}) and $\alpha,\beta\in\mathbb{R}$, we have $$\pi_n(\alpha f+\beta g)=\alpha\pi_n(f)+\beta\pi_n(g)\;\;\textrm{and}\;\;\overline{\pi}(\alpha f+\beta g)=\alpha\overline{\pi}(f)+\beta\overline{\pi}(g).$$  Furthermore, for each $f,g\in L^\infty(\mathbb{R}^d\times\Omega)$, for each $n\geq 0$, $$\abs{\pi_n(f)-\pi_n(g)}=\abs{\pi_n(f-g)}\leq\norm{f-g}_{L^\infty(\mathbb{R}^d\times\Omega)},$$ and $$\abs{\overline{\pi}(f)-\overline{\pi}(g)}=\abs{\overline{\pi}(f-g)}\leq\norm{f-g}_{L^\infty(\mathbb{R}^d\times\Omega)}.$$  It therefore suffices, using the definition fo the Lebesgue integral, to prove the theorem for translates under $\left\{\tau_x\right\}_{x\in\mathbb{R}^d}$ of indicator functions corresponding to locally measurable sets in $\mathcal{F}$.

Fix $R_1>0$ and  $E\in\sigma_{B_{R_1}}$. We write $u^\epsilon$ for the solution of (\ref{h_eq}) with initial data $f_E(x/\epsilon,\omega)$.  Observe that, for each $\epsilon>0$, $u^\epsilon(x,t,\omega)=u(x/\epsilon,t/\epsilon^2,\omega)$ for $u:\mathbb{R}^d\times[0,\infty)\times\Omega\rightarrow\mathbb{R}$ satisfying $$\left\{\begin{array}{ll} u_t-\frac{1}{2}\tr(A(y,\omega)D^2u)+b(y,\omega)\cdot Du=0 & \textrm{on}\;\;\mathbb{R}^d\times(0,\infty), \\ u(x,t,\omega)=f_E(x,\omega) & \textrm{on}\;\;\mathbb{R}^d\times\left\{0\right\}.\end{array}\right.$$  It therefore remains to prove that, on a subset of full probability, for each $R>0$, \begin{equation}\label{h_thm_1}\lim_{\epsilon\rightarrow 0}\sup_{(x,t)\in B_{R/\epsilon}\times[R^2/\epsilon^2,\infty)}\abs{u(x,t,\omega)-\overline{\pi}(f_E)}=0.\end{equation}  We now prove that (\ref{h_thm_1}) is satisfied for every $\omega\in\Omega_0(E)$, for every $R>0$.

Fix $R>0$ and $\omega\in\Omega_0(E)$.  Since $\omega\in\Omega_0(E)$, choose $\overline{\epsilon}_1>0$ such that, whenever $0<\epsilon<\overline{\epsilon}$, $n\geq 0$ and $1\leq k<\ell_n^2$ satisfy $$kL_n^2\leq R^2/\epsilon^2\leq (k+1)L_n^2,$$ we have $\omega\in B_{n,0}(E)$.  And, in view of (\ref{kappa}) and (\ref{D}), choose $\overline{\epsilon}_2>0$ such that, whenever $0<\epsilon<\overline{\epsilon}_2$ satisfies, for $n\geq 0$ and $1\leq k<\ell_n^2$, $$kL_n^2\leq R^2/\epsilon^2<(k+1)L_n,$$ we have $$(k+1)R^2L_n^2\leq k\tilde{D}_n^2.$$

Define $\overline{\epsilon}=\min\left\{\overline{\epsilon}_1,\overline{\epsilon}_2\right\}$ and observe that, whenever $0<\epsilon<\overline{\epsilon}$ satisfies, for $n\geq 0$ and $1\leq k<\ell_n^2$, $$kL_n^2\leq R^2/\epsilon^2\leq (k+1)L_n^2,$$ we have $\omega\in B_{n,0}(E)$ and $B_{R/\epsilon}\subset B_{\sqrt{k}\tilde{D}_n}$.  Therefore, Proposition \ref{u_upmain} implies that, whenever $0<\epsilon<\overline{\epsilon}$ satisfies, for $\overline{n}\geq 0$ and $1\leq \overline{k}<\ell_{\overline{n}}^2$, $$\overline{k}L_{\overline{n}}^2\leq R^2/\epsilon^2<(\overline{k}+1)L_{\overline{n}}^2,$$  we have, for $C>0$ independent of $\overline{n}$, $$\sup_{(x,t)\in B_{R/\epsilon}\times[(R^2+\epsilon^2)/\epsilon^2,\infty)}\abs{u(x,t,\omega)-\overline{\pi}(f_E)}\leq C/\tilde{\kappa}_{\overline{n}}+\sup_{n\geq\overline{n}}\abs{\pi_n(f)-\overline{\pi}(f)}.$$  This, in view of Proposition \ref{o_identify}, completes the argument, since $\overline{n}\rightarrow\infty$ as $\epsilon\rightarrow 0$, and since $R>0$, $R_1>0$ and $E\in\sigma_{B_{R_1}}$ were arbitrary.  \end{proof}

Finally, we remark that, as an immediate consequence of Theorem \ref{h_thm}, our methods also imply the homogenization of equations involving an oscillating righthand side, \begin{equation}\label{h_1eq} \left\{\begin{array}{ll} u^\epsilon_t-\frac{1}{2}\tr(A(x/\epsilon,\omega)D^2u^\epsilon)+\frac{1}{\epsilon}b(x/\epsilon)\cdot Du^\epsilon=f(x/\epsilon,\omega) & \textrm{on}\;\;\mathbb{R}^d\times(0,\infty), \\ u^\epsilon=0 & \textrm{on}\;\;\mathbb{R}^d\times\left\{0\right\},\end{array}\right.\end{equation} and, analogous time-independent problems, like\begin{equation}\label{h_2eq} u^\epsilon-\frac{1}{2}\tr(A(x/\epsilon,\omega)D^2u^\epsilon)+\frac{1}{\epsilon}b(x/\epsilon,\omega)\cdot Du^\epsilon=f(x/\epsilon,\omega)\;\;\textrm{on}\;\;\mathbb{R}^d,\end{equation} for initial data $f(x,\omega)$ satisfying (\ref{o_stationary}) with, for some $R>0$, $f(0,\omega)\in L^\infty(\Omega,\sigma_{B_R})$.  The following two theorems summarize the results.

\begin{thm}\label{h_1thm}  Assume (\ref{steady}) and (\ref{constants}).  Suppose $f\in L^\infty(\mathbb{R}^d\times\Omega)$ satisfies (\ref{o_stationary}) and, for some $R_1>0$, satisfies $f(0,\omega)\in L^\infty(\Omega,\sigma_{B_{R_1}})$.  For each $\epsilon>0$, write $u^\epsilon$ for the solution of (\ref{h_1eq}) with righthand side $f(x/\epsilon,\omega)$.  On a subset of full probability, as $\epsilon\rightarrow 0$, $$u^\epsilon(x,t,\omega)\rightarrow t\overline{\pi}(f)=t\int_{\Omega}f(0,\omega)\;d\pi\;\;\textrm{locally uniformly on}\;\;\mathbb{R}^d\times[0,\infty).$$\end{thm}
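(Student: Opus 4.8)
The plan is to reduce the inhomogeneous problem (\ref{h_1eq}) to the homogenization result already established in Theorem \ref{h_thm} via Duhamel's principle. First I would write, for fixed $\epsilon>0$, the solution $u^\epsilon$ of (\ref{h_1eq}) with righthand side $f(x/\epsilon,\omega)$ using the representation $u^\epsilon(x,t,\omega)=\int_0^t v^\epsilon(x,t-s,\omega)\,ds$, where $v^\epsilon(\cdot,\cdot,\omega)$ solves the homogeneous problem (\ref{h_eq}) with the same oscillating initial data $f(x/\epsilon,\omega)$; this is legitimate since the operator has no zeroth-order term, so the Duhamel kernel for the forced equation is exactly the solution operator of the unforced one evaluated at the elapsed time. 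Equivalently, after the scaling $u^\epsilon(x,t,\omega)=\epsilon^2 w(x/\epsilon,t/\epsilon^2,\omega)$ with $w$ solving the unscaled forced equation with righthand side $f_E$ and zero initial data, Duhamel gives $w(y,\tau,\omega)=\int_0^\tau R_\sigma f_E(y,\omega)\,d\sigma$, whence $u^\epsilon(x,t,\omega)=\int_0^{t/\epsilon^2}\epsilon^2 R_\sigma f_E(x/\epsilon,\omega)\,d\sigma = \int_0^{t} R_{r/\epsilon^2}f_E(x/\epsilon,\omega)\,dr$ after the substitution $r=\epsilon^2\sigma$.

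As in the proof of Theorem \ref{h_thm}, by linearity of $R_s$, $\pi_n$ and $\overline{\pi}$ together with the $L^\infty$-contractivity estimates $\abs{\pi_n(f)-\pi_n(g)}\le\norm{f-g}_{L^\infty}$ and $\abs{\overline{\pi}(f)-\overline{\pi}(g)}\le\norm{f-g}_{L^\infty}$, it suffices to treat $f=f_E$ for $E\in\sigma_{B_{R_1}}$, and to work on the full-probability set $\Omega_0(E)$ from (\ref{o_mainevent}). Fix $\omega\in\Omega_0(E)$ and $R>0$. For $(x,t)$ with $x\in B_{R/\epsilon}$ and $t$ in a fixed compact subset of $[0,\infty)$, I would split the integral $\int_0^{t}R_{r/\epsilon^2}f_E(x/\epsilon,\omega)\,dr$ at a small threshold: the contribution from $r\in[0,\epsilon^2]$ is bounded by $\epsilon^2\norm{f_E}_{L^\infty}\le\epsilon^2\to 0$, while for $r\in[\epsilon^2,t]$ the inner time $s=r/\epsilon^2\ge 1$ is large, and Proposition \ref{u_upmain} (exactly as invoked in Theorem \ref{h_thm}, with the bound $B_{R/\epsilon}\subset B_{\sqrt{k}\tilde D_n}$ whenever $kL_n^2\le R^2/\epsilon^2\le(k+1)L_n^2$ and $\epsilon$ small) yields, on $\Omega_0(E)$, that $\abs{R_s f_E(x/\epsilon,\omega)-\overline{\pi}(f_E)}\le C/\tilde\kappa_{\overline n}+\sup_{n\ge\overline n}\abs{\pi_n(f_E)-\overline{\pi}(f_E)}$ uniformly in such $(x,s)$, where $\overline n=\overline n(\epsilon)\to\infty$ as $\epsilon\to0$ by Proposition \ref{o_identify}. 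Integrating this uniform bound in $r$ over $[0,t]$ and adding the negligible $[0,\epsilon^2]$ piece gives
\[
\sup_{(x,t)\in B_{R/\epsilon}\times[0,T]}\abs{u^\epsilon(x,t,\omega)-t\,\overline{\pi}(f_E)}\le T\left(C/\tilde\kappa_{\overline n}+\sup_{n\ge\overline n}\abs{\pi_n(f_E)-\overline{\pi}(f_E)}\right)+\epsilon^2,
\]
which tends to $0$ as $\epsilon\to0$; since $T$, $R$, $R_1$ and $E\in\sigma_{B_{R_1}}$ were arbitrary, this proves locally uniform convergence on $\mathbb{R}^d\times[0,\infty)$ on $\Omega_0(E)$.

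The only genuinely new point beyond Theorem \ref{h_thm} is the Duhamel reduction and the observation that the uniform-in-$s$ rate supplied by Proposition \ref{u_upmain} survives integration in the time variable; I expect the main (minor) obstacle to be bookkeeping the fact that as $\epsilon\to 0$ the inner time $r/\epsilon^2$ ranges over a whole interval rather than a single value, so one must check that the scale index $\overline n=\overline n(\epsilon,r)$ can be chosen to grow uniformly for $r$ bounded below by $\epsilon^2$ — which it does, since $r/\epsilon^2\ge 1$ forces the relevant $n$ to $\infty$ as $\epsilon\to0$ uniformly on $[\epsilon^2,T]$ once one notes $r/\epsilon^2$ is increasing in $r$ and it is the small-$r$ end that is the binding constraint. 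Everything else is identical to the argument already given for Theorem \ref{h_thm}.
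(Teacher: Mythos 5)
Your proposal is correct and follows essentially the same route as the paper: the paper's proof is precisely the Duhamel observation $u^\epsilon(x,t,\omega)=\int_0^t\tilde{u}^\epsilon(x,s,\omega)\,ds$ with $\tilde{u}^\epsilon$ solving (\ref{h_eq}), followed by an appeal to Theorem \ref{h_thm}. You simply unpack that appeal — splitting off the negligible small-time piece, where Theorem \ref{h_thm} only gives convergence on $\mathbb{R}^d\times(0,\infty)$, and quoting the uniform rate from Proposition \ref{u_upmain} — which the paper leaves implicit.
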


\begin{proof}  Fix $f\in L^\infty(\mathbb{R}^d\times\Omega)$ and $R_1>0$ satisfying (\ref{o_stationary}) and $f(0,\omega)\in L^\infty(\Omega,\sigma_{B_{R_1}}).$  For each $\epsilon>0$, let $u^\epsilon:\mathbb{R}^d\times[0,\infty)\times\Omega\rightarrow\mathbb{R}$ satisfy (\ref{h_1eq}) with righthand side $f(x/\epsilon,\omega)$ and let $\tilde{u}^\epsilon:\mathbb{R}^d\times[0,\infty)\times\Omega\rightarrow\mathbb{R}$ satisfy (\ref{h_eq}) with initial data $f(x/\epsilon,\omega).$  We observe that, for each $\epsilon>0$, $$u^\epsilon(x,t,\omega)=\int_{0}^t\tilde{u}^\epsilon(x,s,\omega)\;ds\;\;\textrm{on}\;\;\mathbb{R}^d\times[0,\infty)\times\Omega.$$  This, in view of Theorem \ref{h_thm}, completes the proof.\end{proof}

\begin{thm}\label{h_2thm}  Assume (\ref{steady}) and (\ref{constants}).  Suppose $f\in L^\infty(\mathbb{R}^d\times\Omega)$ satisfies (\ref{o_stationary}) and, for some $R_1>0$, satisfies $f(0,\omega)\in L^\infty(\Omega,\sigma_{B_{R_1}})$.  For each $\epsilon>0$, write $u^\epsilon$ for the solution of (\ref{h_2eq}) with righthand side $f(x/\epsilon,\omega)$.  On a subset of full probability, as $\epsilon\rightarrow 0$, $$u^\epsilon\rightarrow \overline{\pi}(f)=\int_{\Omega}f(0,\omega)\;d\pi\;\;\textrm{locally uniformly on}\;\;\mathbb{R}^d.$$\end{thm}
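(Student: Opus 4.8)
The plan is to deduce Theorem \ref{h_2thm} from the parabolic homogenization result Theorem \ref{h_thm} via the resolvent representation of the stationary problem, exactly parallel to the proof of Theorem \ref{h_1thm}. For each $\epsilon>0$, let $\tilde{u}^\epsilon:\mathbb{R}^d\times[0,\infty)\times\Omega\rightarrow\mathbb{R}$ be the solution of the parabolic equation (\ref{h_eq}) with initial data $f(x/\epsilon,\omega)$; Theorem \ref{h_thm} provides a subset of full probability on which $\tilde{u}^\epsilon\rightarrow\overline{\pi}(f)$ locally uniformly on $\mathbb{R}^d\times(0,\infty)$. The first step is to observe that, for each $\epsilon>0$, $$u^\epsilon(x,\omega)=\int_0^\infty e^{-s}\tilde{u}^\epsilon(x,s,\omega)\;ds\;\;\textrm{on}\;\;\mathbb{R}^d\times\Omega.$$ Writing $L_\epsilon$ for the generator appearing in (\ref{h_eq}) and (\ref{h_2eq}), so that (\ref{h_eq}) reads $\tilde{u}^\epsilon_s=L_\epsilon\tilde{u}^\epsilon$ and (\ref{h_2eq}) reads $(I-L_\epsilon)u^\epsilon=f(\cdot/\epsilon,\omega)$, the function defined by the integral above is bounded, since $\norm{\tilde{u}^\epsilon(\cdot,s,\cdot)}_{L^\infty(\mathbb{R}^d\times\Omega)}\leq\norm{f}_{L^\infty(\mathbb{R}^d\times\Omega)}$ for every $s\geq0$, and satisfies $(I-L_\epsilon)\int_0^\infty e^{-s}\tilde{u}^\epsilon(\cdot,s,\cdot)\,ds=f(\cdot/\epsilon,\omega)$; equivalently, this is the Feynman--Kac representation of the solution of (\ref{h_2eq}) obtained after an application of Fubini's theorem, see \cite{Fr}. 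Uniqueness of bounded solutions of (\ref{h_2eq}), a consequence of the comparison principle, identifies this function with $u^\epsilon$.

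Second, I would fix $\omega$ in the subset of full probability furnished by Theorem \ref{h_thm}, a compact set $K\subset\mathbb{R}^d$, and $\eta>0$. Since $\int_0^\infty e^{-s}\,ds=1$, for each $x\in\mathbb{R}^d$ we have $$u^\epsilon(x,\omega)-\overline{\pi}(f)=\int_0^\infty e^{-s}\left(\tilde{u}^\epsilon(x,s,\omega)-\overline{\pi}(f)\right)\,ds.$$ Choosing $\delta\in(0,1)$ and $T>1$ with $2\norm{f}_{L^\infty(\mathbb{R}^d\times\Omega)}\left(\delta+e^{-T}\right)<2\eta/3$, the contributions of the integrals over $[0,\delta]$ and over $[T,\infty)$ are each at most $\eta/3$, by the uniform bound $\norm{\tilde{u}^\epsilon(\cdot,s,\cdot)}_{L^\infty}\leq\norm{f}_{L^\infty}$ together with $\abs{\overline{\pi}(f)}\leq\norm{f}_{L^\infty}$, while the contribution of $[\delta,T]$ is controlled by the uniform convergence of $\tilde{u}^\epsilon$ to $\overline{\pi}(f)$ on the compact set $K\times[\delta,T]\subset\mathbb{R}^d\times(0,\infty)$, which is exactly what Theorem \ref{h_thm} asserts. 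Hence $\sup_{x\in K}\abs{u^\epsilon(x,\omega)-\overline{\pi}(f)}<\eta$ for all sufficiently small $\epsilon>0$, and, since $K$ and $\eta$ were arbitrary, $u^\epsilon\rightarrow\overline{\pi}(f)$ locally uniformly on $\mathbb{R}^d$ on the same subset of full probability; the equality $\overline{\pi}(f)=\int_\Omega f(0,\omega)\,d\pi$ is Proposition \ref{o_integral}.

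I do not expect a serious obstacle, since all of the homogenization content is already contained in Theorem \ref{h_thm}. The one point needing care is the interchange of the limit $\epsilon\rightarrow0$ with the Laplace integral: because the convergence in Theorem \ref{h_thm} is only \emph{locally} uniform in time, the tails $s\rightarrow0^+$ and $s\rightarrow\infty$ must be split off and absorbed using the $L^\infty$ bound on $\tilde{u}^\epsilon$, rather than handled by a single application of dominated convergence. As in the proof of Theorem \ref{h_thm}, one may first reduce by linearity and the estimate $\abs{\overline{\pi}(f)-\overline{\pi}(g)}\leq\norm{f-g}_{L^\infty(\mathbb{R}^d\times\Omega)}$ to the case $f=f_E$ for locally measurable $E$, so that the exceptional null set can be taken to be the one already produced there.
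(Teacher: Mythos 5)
Your proposal is correct and follows the paper's own route: the paper likewise writes $u^\epsilon(x,\omega)=\int_0^\infty e^{-s}\tilde{u}^\epsilon(x,s,\omega)\,ds$ and then invokes Theorem \ref{h_thm}, leaving implicit the splitting of the Laplace integral near $s=0$ and $s=\infty$ that you carry out explicitly. Your added detail on handling the tails via the $L^\infty$ bound is exactly the right way to justify the interchange of limits that the paper takes for granted.
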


\begin{proof}  Fix $f\in L^\infty(\mathbb{R}^d\times\Omega)$ and $R_1>0$ satisfying (\ref{o_stationary}) and $f(0,\omega)\in L^\infty(\Omega,\sigma_{B_{R_1}}).$  For each $\epsilon>0$, let $u^\epsilon:\mathbb{R}^d\times\Omega\rightarrow\mathbb{R}$ satisfy (\ref{h_2eq}) with righthand side $f(x/\epsilon,\omega)$ and let $\tilde{u}^\epsilon:\mathbb{R}^d\times[0,\infty)\times\Omega\rightarrow\mathbb{R}$ satisfy (\ref{h_eq}) with initial data $f(x/\epsilon,\omega).$  We observe that, for each $\epsilon>0$, $$u^\epsilon(x,\omega)=\int_{0}^\infty e^{-s}\tilde{u}^\epsilon(x,s,\omega)\;ds\;\;\textrm{on}\;\;\mathbb{R}^d\times\Omega.$$  This, in view of Theorem \ref{h_thm}, completes the proof.\end{proof}

\bibliography{measure}
\bibliographystyle{plain}

\end{document}